\documentclass[12pt,a4paper]{amsart}

\usepackage{mathtools}
\usepackage[utf8]{inputenc}
\usepackage[T1]{fontenc}
\usepackage[normalem]{ulem}
\usepackage{cancel}

\usepackage[pdftex]{graphicx}
\usepackage{latexsym}
\usepackage{amsmath,amssymb,amsthm}
\usepackage[shortlabels]{enumitem}   % für andere Zählarten bei enumerate

\usepackage{mathpazo} % für Schrift
\usepackage{nicefrac} % für schräge Brüche
\allowdisplaybreaks % für Seitenumbruch in align-Umgebung

\usepackage{scalerel,stackengine}
\stackMath
\newcommand\reallywidehat[1]{%
\savestack{\tmpbox}{\stretchto{%
  \scaleto{%
    \scalerel*[\widthof{\ensuremath{#1}}]{\kern-.6pt\bigwedge\kern-.6pt}%
    {\rule[-\textheight/2]{1ex}{\textheight}}%WIDTH-LIMITED BIG WEDGE
  }{\textheight}% 
}{0.5ex}}%
\stackon[1pt]{#1}{\tmpbox}%  für reallywidehat
}

% Abstand obere Blattkante zur Kopfzeile ist 2.54cm - 15mm
%\setlength{\topmargin}{-15mm}

\newenvironment{myarray}
  {\arraycolsep=1.4pt%
   \array}
  {\endarray}

\newtheorem{thm}{Theorem}[section]
\newtheorem{lem}[thm]{Lemma}
\newtheorem{definition}[thm]{Definition}

\newtheorem{rem}[thm]{Remark}

\textwidth166mm
\textheight212mm
\hoffset-2cm
\voffset-7mm

\DeclarePairedDelimiter{\abs}{\lvert}{\rvert}
\DeclarePairedDelimiter{\norm}{\lVert}{\rVert}

% einige Abkuerzungen
\newcommand{\C}{\mathbb{C}} % komplexe
 % komplexe
\newcommand{\R}{\mathbb{R}} % reelle
 % rationale
\newcommand{\Z}{\mathbb{Z}} % ganze
\newcommand{\Zodd}{{\Z_{odd}}}
\newcommand{\N}{\mathbb{N}} % natuerliche
\newcommand{\Nodd}{{\N_{odd}}}
\newcommand{\T}{{\mathbb T}}

 %Fußnote: Symbole statt Ziffern 

\DeclareMathOperator{\dist}{dist}
\DeclareMathOperator{\trace}{trace}
\DeclareMathOperator{\supp}{supp}

\usepackage[colorlinks=true,urlcolor=blue]{hyperref} 			% Referenzen klickbar, als letztes laden

\begin{document}

\title[Breather solutions on a periodic metric graph]{Breather solutions for a semilinear Klein-Gordon equation on a periodic metric graph}

\author{Daniela Maier}
\address{D. Maier \hfill\break 
Institute for Analysis, Karlsruhe Institute of Technology (KIT), \hfill\break
D-76128 Karlsruhe, Germany}
\email{daniela.maier11@web.de}

\author{Wolfgang Reichel}
\address{W. Reichel \hfill\break 
Institute for Analysis, Karlsruhe Institute of Technology (KIT), \hfill\break
D-76128 Karlsruhe, Germany}
\email{wolfgang.reichel@kit.edu}

\author{Guido Schneider}
\address{G. Schneider \hfill\break 
Institut f\"ur Analysis, Dynamik und Modellierung, Universit\"at Stuttgart, \hfill\break 
D-70569 Stuttgart, Germany}
\email{guido.schneider@mathematik.uni-stuttgart.de}

\subjclass[2000]{Primary: 35L71, 49J35; Secondary: 35B10, 34L05}

% 	35L71   	Second-order semilinear hyperbolic equations
% 49J35   	Existence of solutions for minimax problems
% 35B10   	Periodic solutions to PDEs
% 	34L05   	General spectral theory of ordinary differential operators

\keywords{semilinear Klein-Gordon equation, breather solutions, time-periodic, variational methods, metric graph}

\begin{abstract} We consider the nonlinear Klein-Gordon equation
$$
\partial_t^2u(x,t)-\partial_x^2u(x,t)+\alpha u(x,t)=\pm\abs{u(x,t)}^{p-1}u(x,t)
$$
on a periodic metric graph (necklace graph) for $p>1$ with Kirchhoff conditions at the vertices. Under suitable assumptions on the frequency we prove the existence and regularity of infinitely many spatially localized time-periodic solutions (breathers) by variational methods. We compare our results with previous results obtained via spatial dynamics and center manifold techniques. Moreover, we deduce regularity properties of the solutions and show that they are weak solutions of the corresponding initial value problem. Our approach relies on the existence of critical points for indefinite functionals, the concentration compactness principle, and the proper set-up of a functional analytic framework. Compared to earlier work for breathers using variational techniques, a major improvement of embedding properties has been achieved. This allows in particular to avoid all restrictions on the exponent $p>1$ and to achieve higher regularity. 
\end{abstract}

\maketitle

%\tableofcontents

\section{Introduction and results}

We consider the nonlinear Klein-Gordon equation 
\begin{equation}\label{waveeq}
\partial_t^2u(x,t)-\partial_x^2u(x,t)+\alpha u(x,t)=\pm\abs{u(x,t)}^{p-1}u(x,t)
\end{equation}
on the $2\pi$-periodic necklace graph $\Gamma$ with $p\in (1,\infty)$ and $\alpha \geq 0$ together with Kirchhoff conditions at the vertex points. We are interested in breather solutions, i.e., time-periodic, real-valued, and spatially localized solutions. One of the main features of this problem is that the Laplacian $-\frac{d^2}{dx^2}$ with Kirchhoff conditions at the vertices has a spectrum with band-gap structure and in particular spectral gaps occur. 

\medskip

Like in any dynamical system, coherent states (such as e.g. breathers and steady states) are of particular interest. Steady states of \eqref{waveeq} solve an elliptic problem on the necklace graph. Similarly, time-harmonic standing waves of the form $u(x,t)=v(x)e^{i\omega t}$ also solve an elliptic problem of the type $\bigl(-\frac{d^2}{dx^2}+\alpha-\omega^2\bigr)v= \pm |v|^{p-1}v$ on $\Gamma$. Such nonlinear elliptic problems (arising similarly for standing waves of nonlinear Schr\"odinger (NLS) rather than Klein-Gordon equations) have been considered in \cite{pankov_graph,sch_pel}. The outcome is essentially that whenever $\omega^2-\alpha$ is positive and lies in a spectral gap of the Laplacian then solutions $v$ of the nonlinear elliptic profile-equation exist in both the ``$-$'' and the ``$+$'' case. When $\omega^2-\alpha\leq 0$ then the ``$-$'' case has no nontrivial solutions decaying to $0$ at $\pm\infty$ whereas in the ``$+$'' has different types of solutions  of the profile-equation exist (solutions with compact support and solutions with one sign), cf. \cite{sch_pel}. 

\medskip

If instead of time-harmonic standing waves or stationary solutions one looks for real-valued breathers then new difficulties arise. By the requirement of the solutions being real-valued one needs at least two temporal frequencies in the Fourier-decomposition of any solution. But due to the nonlinearity, new frequencies are generated and hence in fact infinitely many frequencies are needed. As a result, the Fourier-modes of standing breathers satisfy an infinitely coupled elliptic system for which the spectral gap condition becomes more delicate than in the monochromatic case. Finding a method to overcome this difficulty is one of the main aspects of this paper.

\medskip

The interest in breather solutions for semilinear wave equations goes back at least as far as the discovery of the explicit breather solution of the sine-Gordon equation $u_{tt}-u_{xx}+\sin u=0$ on $\R\times\R$, cf. \cite{ab_kaup_newell_segur:73}. Similarly, in spatially discrete Fermi-Pasta-Ulam-Tsingou (FPUT) lattices the existence of breathers, cf. \cite{arioli_gazzola:96,james_breathers:09}, is an indication that in certain wave-type equations energy is not always dispersed to infinity. These wave-type equations are, however, quite rare as many nonexistence results show. For example, due to the works \cite{Birnir,Denzler,segur_kruskal}, and more recently \cite{kowalczyk_et_al} it became clear that breathers do not persist in homogeneous nonlinear wave-type equations if the $\sin u$ nonlinearity in the sine-Gordon equation is replaced by a more general nonlinearity $f(u)$ with $f(0)=0, f'(0)>0$. 

\medskip

Therefore, it came as a surprise that heterogeneous wave equations can indeed support breathers. Examples are nonlinear wave equations on discrete lattices, cf. \cite{james_breathers:09,mackay_aubry} or nonlinear wave equations on the real line with $x$-dependent coefficients, cf. \cite{BlaSchneiChiril,hirschreichel}. Whereas the equations considered in \cite{BlaSchneiChiril} and \cite{hirschreichel} are nearly the same, the methods are completely different. The former essentially follows a bifurcation approach using spatial dynamics and center manifold reductions, and produces a family of small breathers bifurcating from $0$ where all of them have the same time-period. The latter approach uses variational methods and finds breathers in a larger parameter regime that do not bifurcate from $0$. In both cases a spectral gap near zero of the wave operator acting on time-periodic functions with a given time-period is vital for the results. Recently, in \cite{plum_reichel} another existence result for vector-valued breathers for a $3+1$-dimensional semilinear curl-curl wave equation with radial symmetry appeared. It is based on ODE-methods and the fact that the breather can be found as a gradient of a spatially radially symmetric function.

\medskip

Yet another way to introduce heterogeneity into a wave equation is to set it up on a quantum graph. Now the heterogeneity stems from the underlying branched structure and not from the equation or the operator. While at first \cite{pankov_graph,sch_pel} standing monochromatic waves were of interest for NLS equations on quantum graphs, more recently Maier \cite{Maier} gave the first existence proof of real-valued breathers for \eqref{waveeq}. Her method first produced spectral gaps in the wave operator by the correct choice of the temporal frequency. Then she used the spatial dynamics point of view and performed a center manifold reduction to show the existence of a homoclinic solution on the center manifold which persisted by going back to the original equation.

\medskip

Our existence result in Theorem~\ref{main} for spatially localized, real-valued time-periodic solutions of \eqref{waveeq} is the main outcome of this paper, and it directly compares to the result from \cite{Maier}. Methodically, we also use tools from the calculus of variations like in \cite{hirschreichel}. However, we have much improved the embedding properties of the underlying Hilbert space $\mathcal{H}$ on which breathers arise as critical points of a suitable functional. In fact, we can cover the whole range of $L^q$-embeddings for $q \in [2,\infty)$, whereas in \cite{hirschreichel} there were still some suboptimal restrictions to $q\in [2,q^\ast)$ with $q^\ast<\infty$.  

\medskip

For the statement of the existence result note that the precise definition of the functions spaces is given in Section~\ref{sec:properties}. In the time direction we denote by $\T$ the $1$-dimensional $4\pi$-periodic torus and by $V$ the set of vertices of $\Gamma$. The space $H^1_c(\Gamma\times\T)$ denotes all $H^1$-functions on $\Gamma\times\T$ such that the $L^2$-traces on $V\times\T$ coincide by approaching from all edges leading to a particular vertex. Since weak derivatives in the spatial direction on a metric graph can only be defined edge-wise and not globally, the Sobolev spaces are also defined edge-wise and thus do not imply continuity of traces at the vertices unless we explicitly require it.

\begin{thm} \label{main} Let $p\in (1,\infty)$ and $\alpha\geq 0$. Then there exist infinitely many distinct weak $2\pi$-time-antiperiodic solutions $u\in H^1_c(\Gamma\times\T)\cap H^2(\Gamma\times\T)$ of \eqref{waveeq} in the sense of Definition~\ref{def_weak_sol}. The solutions are symmetric with respect to the upper/lower-semicircles and satisfy \eqref{waveeq} pointwise a.e. on $\Gamma\times\T$ and the continuity conditions \eqref{kirchhoff_cont} and the Kirchhoff conditions \eqref{kirchhoff_abl} holds at the vertices for almost all $t\in \T$.
\end{thm}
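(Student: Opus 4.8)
The plan is to realize breathers as critical points of an indefinite functional on the Hilbert space $\mathcal{H}$ adapted to the band-gap structure of the wave operator. First I would expand the unknown $u$ in a temporal Fourier series in the $4\pi$-periodic time variable; since we seek $2\pi$-antiperiodic solutions, only odd Fourier modes $e^{\iu k t/2}$ with $k$ odd survive, and reality forces the conjugation symmetry $\hat u_{-k}=\overline{\hat u_k}$. Writing $L=-\frac{d^2}{dx^2}+\alpha$ (the Kirchhoff Laplacian shifted by $\alpha$) the equation becomes, mode by mode, $\bigl(L-\tfrac{k^2}{4}\bigr)\hat u_k=\pm\widehat{(|u|^{p-1}u)}_k$. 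The crucial point, established by the choice of temporal frequency as in \cite{Maier} and developed in Section~\ref{sec:properties}, is that the self-adjoint operator $\bigoplus_k (L-k^2/4)$ has a spectral gap around $0$ with infinitely many positive and infinitely many negative bands; this splits $\mathcal{H}=\mathcal{H}^+\oplus\mathcal{H}^-$ into the positive and negative spectral subspaces, and the quadratic form $Q(u)=\sum_k\langle(L-k^2/4)\hat u_k,\hat u_k\rangle$ is positive definite on $\mathcal{H}^+$, negative definite on $\mathcal{H}^-$, with a uniform gap. The action functional is then $\Phi(u)=\tfrac12 Q(u)\mp\tfrac{1}{p+1}\|u\|_{L^{p+1}(\Gamma\times\T)}^{p+1}$.

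The key analytic input is the embedding $\mathcal{H}\hookrightarrow L^q(\Gamma\times\T)$ for all $q\in[2,\infty)$, which is where the paper claims its main improvement over \cite{hirschreichel}. Granting this (it is stated as available from Section~\ref{sec:properties}), $\Phi$ is well-defined and $C^1$ on $\mathcal{H}$ for every $p>1$. I would then verify the linking/mountain-pass geometry for strongly indefinite functionals: because of the $\mathcal{H}^+\oplus\mathcal{H}^-$ splitting one uses a generalized linking theorem (in the spirit of Benci--Rabinowitz, or the fountain-type theorems for indefinite functionals) exploiting the $\Z_2$-symmetry $u\mapsto -u$ of $\Phi$ together with the $S^1$-symmetry coming from time translation, to produce an unbounded sequence of critical values, hence infinitely many geometrically distinct solutions. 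The symmetry with respect to the upper/lower semicircles of the necklace is imposed by restricting to the corresponding symmetric subspace of $\mathcal{H}$, on which the same variational scheme runs and which also helps compactness.

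The main obstacle, as usual for problems on noncompact domains, is the lack of compactness of the embedding $\mathcal{H}\hookrightarrow L^{p+1}$: the translation invariance of $\Gamma$ in the periodic direction means Palais--Smale sequences can split off mass at spatial infinity. Here I would invoke the concentration-compactness principle (mentioned in the abstract): for a Palais--Smale sequence at a mountain-pass/linking level, rule out vanishing by the nontriviality of the level, rule out dichotomy by subadditivity of the relevant minimax levels, and recover (after translation by integer multiples of the period) a nonzero weak limit which is the desired critical point. Because the period translations act on $\mathcal{H}$ and the functional is invariant under them, the concentration-compactness argument can be carried out directly in $\mathcal{H}$. This yields one nontrivial breather; combining with the $\Z_2$-symmetric minimax construction upgrades this to infinitely many distinct solutions.

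Finally I would address regularity and the weak-solution statement. Once $u\in\mathcal{H}$ is a critical point, the Euler--Lagrange equations are exactly the mode equations above; a bootstrap using the $L^q$-embeddings for all finite $q$ and elliptic regularity on each edge (together with the Kirchhoff transmission conditions, which are the natural boundary conditions encoded in $\mathcal{H}$ and in the weak formulation) gives $u\in H^2(\Gamma\times\T)$, so that \eqref{waveeq} holds pointwise a.e., the continuity conditions \eqref{kirchhoff_cont} hold, and the Kirchhoff derivative conditions \eqref{kirchhoff_abl} hold for a.e.\ $t$. This matches Definition~\ref{def_weak_sol} and completes the proof of Theorem~\ref{main}. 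I expect the genuinely hard part to be the compactness step — in particular showing the strict subadditivity needed to exclude dichotomy in the indefinite setting — since the indefiniteness of $\Phi$ prevents the usual energy-monotonicity arguments and forces a more careful analysis of the splitting of Palais--Smale sequences along the spectral decomposition.
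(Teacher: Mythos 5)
Your overall skeleton — temporal Fourier expansion, a Hilbert space adapted to the band-gap structure with the indefinite splitting $\mathcal{H}=\mathcal{H}^+\oplus\mathcal{H}^-$, the $L^q(\Gamma\times\T)$-embeddings for all finite $q$, and concentration compactness plus integer translations to recover a nontrivial critical point — is the same as the paper's. But there are two genuine gaps. First, the frequency choice: you work with $\omega=\tfrac12$ and \emph{all} odd temporal modes $k$, asserting the spectral gap ``as in \cite{Maier}''. For arbitrary $\alpha\geq 0$ this fails: the quasiperiodic eigenvalues $\lambda_m(l)-\tfrac{k^2}{4}+\alpha$ can vanish for small odd $k$ once $\alpha$ is not close to the special values treated in \cite{Maier}. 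The paper must restrict the modes to $k\in\kappa\Zodd$ with $\kappa$ odd and large depending on $A\geq\alpha$ (Lemma~\ref{specific_values}); only then do \eqref{spec} and the summability condition \eqref{convergence} hold, and these are the inputs for the embedding Theorem~\ref{hlp} on which everything else (well-definedness of $J$, the splitting, the compactness lemma) rests. Without this restriction your framework is simply not available for all $\alpha\geq0$.

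Second, and more seriously, the multiplicity. You propose a $\Z_2$-symmetric fountain/linking scheme giving an unbounded sequence of critical values, with dichotomy excluded by strict subadditivity — and you yourself flag that step as the hard one. In this translation-invariant, strongly indefinite setting the Palais–Smale condition fails, fountain-type theorems do not apply as stated, and concentration compactness only handles the lowest level cleanly; at higher minimax levels the splitting of Palais–Smale sequences is not controlled by any subadditivity you have established, so this route is not a proof. The paper does something quite different: for each admissible $\kappa$ it produces a single ground state by minimizing $J$ on the generalized Nehari manifold in the sense of Szulkin–Weth (boundedness of Palais–Smale sequences, the Lions-type vanishing Lemma~\ref{ConccompLemma}, shifts by $2\pi m_n$, and Fatou's lemma — no dichotomy analysis is needed), and then the \emph{infinitely many} solutions come from varying the anti-period: the minimizers $u_\kappa$ are $\tfrac{2\pi}{\kappa}$-antiperiodic, nonzero, and converge weakly to $0$ as $\kappa\to\infty$, hence infinitely many are distinct. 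This mechanism is absent from your proposal, and it is also why the restriction $k\in\kappa\Zodd$ is not a technicality. Finally, your ``elliptic bootstrap'' for $H^2$-regularity is too quick: the operator is a wave operator, so regularity in $t$ does not come edge-wise; the paper needs the quantitative gap estimates of Lemma~\ref{sufficient_for_embedding} to invert $L$ with $H^1$-control (Lemma~\ref{lin_reg1}) and a difference-quotient argument in time to obtain $\partial_t^2u,\ \partial_x\partial_tu\in L^2$, after which $\partial_x^2u\in L^2$, the pointwise equation, and the Kirchhoff conditions \eqref{kirchhoff_abl} follow.
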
 

The results in \cite{Maier} also produce solutions that are symmetric with respect to the upper{/}\-lower-semicircles of the metric graph. It remains an open question whether or not one can find breathers which are not symmetric w.r.t. to the upper/lower-semicircles. The solutions from \cite{Maier} have the precise temporal period $\omega=\frac{k}{2}$, they exist for small $\epsilon>0$ when $\alpha=\frac{k^2}{4}+\epsilon^2$ and bifurcate from $0$ with amplitude $O(\epsilon)$. Multiplicities are not given in this approach. In contrast, we have that infinitely many solutions exist for all values of $\alpha\geq 0$ and in particular do not vanish at the bifurcation points $\alpha=\frac{k^2}{4}$. However, although our solutions are in fact $\frac{2\pi}{\kappa}$ antiperiodic in time where $\kappa\in \Nodd$ is an integer which is chosen sufficiently large in dependance of $A$ for $\alpha \in [0,A]$, we cannot control their minimal period. Finally, while the solutions from \cite{Maier} are exponentially decreasing in space, the spatial localization of our solutions is characterized in weaker form by integrability properties of $u$ up to its second derivatives in space and time.
 
\begin{rem}  
Following \cite{SW} one can generalize the right hand side $f(s)=|s|^{p-1}s$ to more general functions $f=f(x,t,u)$ as follows: assume that
\begin{itemize}
\item[(H1)] $f\colon \Gamma\times\R\times\R\to\R$ is a continuous function, which is $2\pi$-periodic in the first and $2\pi$-antiperiodic as well as symmetric w.r.t. the upper/lower-semicircles in the second variable, with $\vert f(x,t,u)\vert \leq c(1+\vert u\vert^p)$ for some $c>0$ and $p>1$,
\item[(H2)] $f(x,t,u)=o(u)$ as $u\to 0$ uniformly in $x\in\Gamma$, $t\in\T$,
\item[(H3)] $f(x,t,u)$ is odd in $u\in\R$ and $u \mapsto f(x,t,u)/|u|$ is strictly increasing on $(-\infty,0)$ and $(0,\infty)$ for all $x\in\Gamma$ and all $t\in\T$,
\item[(H4)] $\frac{F(x,t,u)}{u^2}\to\infty$ as $u\to\infty$ uniformly in $x\in \Gamma$, $t\in\T$ where $F(x,t,u)\coloneqq \int_0^u f(x,t,v)\,dv$.
\end{itemize}
\end{rem}

\begin{definition} \label{def_weak_sol}
A time-periodic function $u\in H^1_c(\Gamma\times\T)$ is called a weak solution of \eqref{waveeq} if
\begin{equation}
\int_{\Gamma\times\T} \left(\partial_xu\partial_x\phi-\partial_tu\partial_t\phi+\alpha u\phi\mp\abs{u}^{p-1}u\phi\right)\, d(x,t)=0
\end{equation}
holds for every time-periodic $\phi\in H^1_c(\Gamma\times\T)$. 
\end{definition}

Our second theorem describes that the breathers found in Theorem~\ref{main} are also weak solution to the initial value problem with their own induced initial values. 

\begin{thm} \label{prop_ivp} Any weak solution $u$ from Theorem~\ref{main} has the property that $u\in L^2(\T; H^1_c(\Gamma))\cap H^2(\T; L^2(\Gamma))$ and it solves \eqref{waveeq} in the sense that for almost all $t\in (0,\infty)$ 
\begin{equation} \label{ivp}
\int_\Gamma \left(\partial_t^2 u \varphi + \partial_x u\partial_x \varphi +\alpha u\varphi \mp |u|^{p-1}u\varphi\right)\,dx=0, \quad \varphi \in H^1_c(\Gamma) 
\end{equation}
with its own initial values $u(\cdot,0)\in L^2(\Gamma)$, $\partial_t u(\cdot,0)\in L^2(\Gamma)$ in the sense of traces. As a consequence of \eqref{ivp} the continuity conditions \eqref{kirchhoff_cont} and the Kirchhoff conditions \eqref{kirchhoff_abl} hold for almost all $t\in\T$. 
\end{thm}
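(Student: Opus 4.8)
The plan is to decode the space-time weak formulation of Definition~\ref{def_weak_sol} into a family of identities that are pointwise in time, exploiting the additional regularity $u\in H^2(\Gamma\times\T)$ supplied by Theorem~\ref{main} (throughout, $u$ is extended $4\pi$-periodically in $t$). First I would record the Bochner regularity. Since $u,\partial_tu,\partial_t^2u\in L^2(\Gamma\times\T)$, a routine Fubini argument identifies $u$ with an element of $H^2(\T;L^2(\Gamma))$, and the one-dimensional embedding $H^2(\T;L^2(\Gamma))\hookrightarrow C^1(\T;L^2(\Gamma))$ then makes the traces $u(\cdot,0),\partial_tu(\cdot,0)\in L^2(\Gamma)$ well defined. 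For the spatial part, $\int_\T\|u(\cdot,t)\|_{H^1(\Gamma)}^2\,dt=\|u\|_{L^2}^2+\|\partial_xu\|_{L^2}^2<\infty$ gives $u(\cdot,t)\in H^1(\Gamma)$ edgewise for a.e.\ $t$, while the hypothesis $u\in H^1_c(\Gamma\times\T)$ — equality of the $L^2(V\times\T)$-traces from adjacent edges — forces, by Fubini, equality of the $L^2(V)$-traces of $u(\cdot,t)$ for a.e.\ $t$; hence $u\in L^2(\T;H^1_c(\Gamma))\cap H^2(\T;L^2(\Gamma))$. I would also note that the restriction of $u$ to each closed edge times $\T$ lies in $H^2$ of a two-dimensional domain, hence in $C^0$, uniformly in the edge by periodicity; in particular $u\in L^2\cap L^\infty$ and, by interpolation, $|u|^{p-1}u\in L^2(\Gamma\times\T)$, which keeps every integral below finite.

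Next I would localize in time. In the weak formulation I integrate by parts in $t$ only, turning $-\int\partial_tu\,\partial_t\phi$ into $\int\partial_t^2u\,\phi$ with no boundary term by time-periodicity; it is essential not to integrate by parts in $x$ at this stage, because the $H^1_c$-constraint on the test space is exactly what encodes the vertex conditions. Choosing separated test functions $\phi(x,t)=\varphi(x)\psi(t)$ with $\varphi\in H^1_c(\Gamma)$ and $\psi\in C^\infty(\T)$ gives $\int_\T\psi(t)g_\varphi(t)\,dt=0$ for all such $\psi$, where $g_\varphi(t)=\int_\Gamma(\partial_t^2u\,\varphi+\partial_xu\,\partial_x\varphi+\alpha u\varphi\mp|u|^{p-1}u\varphi)\,dx$ defines an $L^1(\T)$-function, so $g_\varphi=0$ a.e.\ by the fundamental lemma of the calculus of variations. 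Running this over a countable dense subset of the separable space $H^1_c(\Gamma)$ yields a single null set off which both $g_{\varphi_n}=0$ for all $n$ and (using $u(\cdot,t)\in H^1(\Gamma)\cap L^{2p}(\Gamma)$, $\partial_t^2u(\cdot,t)\in L^2(\Gamma)$) the map $\varphi\mapsto g_\varphi(t)$ is a bounded linear functional on $H^1_c(\Gamma)$; by density, \eqref{ivp} holds for all $\varphi\in H^1_c(\Gamma)$ at a.e.\ $t$. Together with $u\in C^1(\T;L^2(\Gamma))$ this is precisely the claim that $u$ solves \eqref{ivp} with its own traces as initial data.

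Finally I would extract the vertex conditions. Fix a good time $t$. Testing \eqref{ivp} with $\varphi\in C^\infty_c(e)$ supported in the interior of a single edge shows $-\partial_x^2u(\cdot,t)=-\partial_t^2u(\cdot,t)-\alpha u(\cdot,t)\pm|u(\cdot,t)|^{p-1}u(\cdot,t)$ in $L^2(e)$, so $u(\cdot,t)\in H^2(e)$ on each edge and \eqref{waveeq} holds pointwise a.e. For a general $\varphi\in H^1_c(\Gamma)$ I then integrate by parts in $x$ edgewise in \eqref{ivp} — now legitimate — and collect the endpoint contributions: the interior terms cancel by the pointwise equation, leaving a sum $\sum_v\varphi(v)\,\sigma_v(t)=0$, where $\sigma_v(t)$ is the signed sum over the edges at $v$ of the one-sided derivatives of $u(\cdot,t)$ entering \eqref{kirchhoff_abl}. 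Choosing, for each vertex $v$, a $\varphi\in H^1_c(\Gamma)$ supported near $v$ with $\varphi(v)\neq 0$ forces $\sigma_v(t)=0$, i.e.\ the Kirchhoff conditions \eqref{kirchhoff_abl}; the continuity conditions \eqref{kirchhoff_cont} are already contained in $u(\cdot,t)\in H^1_c(\Gamma)$ (indeed in the continuity of $u$ on $\Gamma\times\T$). I expect the only genuinely delicate point to be the bookkeeping of the order of operations — integrate by parts in $t$ and localize in $t$ \emph{first}, and only afterwards integrate by parts in $x$ — together with the density argument that upgrades ``fixed $\varphi$, a.e.\ $t$'' to ``a.e.\ $t$, all $\varphi$''; everything else is routine.
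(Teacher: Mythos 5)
Your argument is correct and reaches all the assertions of the theorem, but it takes a somewhat different route than the paper at the key ``localization in time'' step. The paper stays on the Fourier side: it tests the space-time weak formulation with $\phi(x,t)=\varphi(x)(e^{ik\omega t}+e^{-ik\omega t})$ to get one identity per Fourier coefficient $u_k$, then multiplies by $e^{ik\omega t}$ and resums, using that the series $\sum_k(-k^2\omega^2)u_ke^{ik\omega t}$, $\sum_k u_k'e^{ik\omega t}$, etc., converge in $L^2(\Gamma\times\T)$ thanks to the regularity established in Section~6; the Kirchhoff conditions are then obtained by edgewise integration by parts at the level of these coefficient identities, as in Lemma~\ref{nonlinear_reg2}. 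You instead work directly in physical time: integrate by parts in $t$ only (legitimate since $\partial_t^2u\in L^2$ and by periodicity), take separated tests $\varphi(x)\psi(t)$, apply the fundamental lemma in $t$, and then upgrade ``fixed $\varphi$, a.e.\ $t$'' to ``a.e.\ $t$, all $\varphi\in H^1_c(\Gamma)$'' via separability and the a.e.-$t$ boundedness of $\varphi\mapsto g_\varphi(t)$ --- a null-set bookkeeping point that the paper handles only implicitly through the $L^2$-resummation, so your treatment is in this respect more explicit; your argument also does not use the specific Fourier structure of $\mathcal{H}$ and would apply to any time-periodic weak solution with the stated $H^2$-regularity, while the paper's resummation is shorter because that structure is already built in. The trace statements ($H^2(\T;L^2(\Gamma))\hookrightarrow C^1(\T;L^2(\Gamma))$ in your version, versus $u,\partial_tu\in H^1_c(\Gamma\times\T)$ and 2D edgewise traces in the paper) and the extraction of \eqref{kirchhoff_abl} by localizing $\varphi$ at a vertex after edgewise integration by parts are equivalent in both versions. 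Two cosmetic remarks: the uniform $C^0$-bound on edges follows because all edges are isometric (not ``by periodicity'' of $u$), and the $L^\infty$ detour is unnecessary anyway, since $|u|^{p-1}u\in L^2(\Gamma\times\T)$ already follows from the embedding $\mathcal{H}\hookrightarrow L^q(\Gamma\times\T)$ for all $q\in[2,\infty)$ (Lemma~\ref{specific_values}), which is how the paper argues.
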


Finally, we mention that spatial heterogeneity is not a necessary condition for breathers. In fact, it was shown in \cite{mandel_scheider_breather,scheider_breather} that weakly localized breathers exist for an entire class of nonlinearities for constant coefficient nonlinear wave equations in space dimensions higher or equal than $2$.

\medskip

Our main tool for proving existence of breather solutions for \eqref{waveeq} is the use of variational methods for indefinite functionals, cf. \cite{pankov,SW}. Such functionals arise typically for wave equations. On spatially bounded intervals, where the Laplacian has discrete spectrum, breathers for nonlinear wave equations were indeed found variationally, cf. \cite{brezis_coron, brezis_coron_nirenberg, hofer}. These approaches build strongly on the discreteness of the spectrum of the Laplacian, which fails for unbounded spatial domains like the necklace graph. Therefore, the careful building and investigation of the functional framework as described in Section~\ref{sec:functional_framework} is the key to our result.

\medskip

The paper is structured as follows. In Section~\ref{sec:properties} we describe the periodic necklace graph and define on it the Laplacian with Kirchhoff conditions. In Section~\ref{sec:spectrum} we give a characterization of the spectrum of the Kirchhoff Laplacian on the necklace graph and we introduce the Bloch transform and the representation of functions in Bloch variables. Section~\ref{sec:functional_framework} contains the definition of an appropriate Hilbert space $\mathcal{H}$ on which we can define the functional whose critical points will be the breathers. In particular, we show that $\mathcal{H}$ embeds into the whole range of $L^q(\Gamma\times\T_T)$ spaces for $2\leq q<\infty$. This is a major improvement compared to \cite{hirschreichel}. The existence result of Theorem~\ref{main} is given in Section~\ref{sec:existence}, where we rely on variational methods developed in an abstract setting in \cite{SW}. The regularity part of Theorem~\ref{main} and the connection to the initial value problem for \eqref{waveeq} as explained in Theorem~\ref{prop_ivp} is shown in Section~\ref{sec:regularity} by making use of the method of differences, cf. \cite[Chapter 7.11]{gt}. Finally, in the Appendix we show the concentration compactness result of Lemma~\ref{ConccompLemma}. This is nonstandard since our functional framework is not in Sobolev spaces.

\section{Properties of the periodic necklace graph} \label{sec:properties}

The periodic necklace graph is of the form
\begin{equation}
\Gamma=\oplus_{n\in\Z}\Gamma_n,\quad \Gamma_n=\Gamma_{n,0}\oplus\Gamma_{n,+}\oplus\Gamma_{n,-}.
\end{equation}
The edges $\Gamma_{n,0}$ correspond to the horizontal links and the edges $\Gamma_{n,\pm}$ are parallel edges that are visualized by the upper and lower semicircles in Figure \ref{necklace}. 
In order to get a metric graph we identify each edge with a closed interval of length $\pi$. 
In particular, $\Gamma_{n,0}$ is identified isometrically with the interval $I_{n,0}=[2\pi n,2\pi n+\pi]$ and $\Gamma_{n,\pm}$ with $I_{n,\pm}=[2\pi n +\pi,2\pi(n+1)]$. 
Hence, it makes sense to define differential operators on the edges. 
\begin{figure} \label{fig:necklace_graph}
\centering
\includegraphics{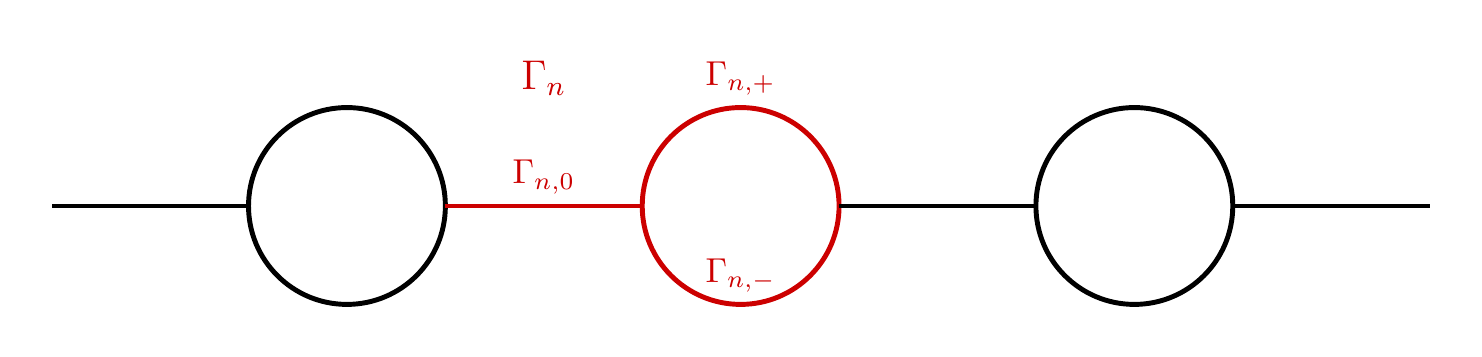}
\caption{Periodic necklace graph}
\label{necklace}
\end{figure}
A function $u:\Gamma\to\C$ can be represented by a triple of functions $u_j:\R\to\C$, $j\in\{0,+,-\}$ with 
\begin{equation}
u|_{\Gamma_{n,j}}=u_j|_{I_{n,j}}\,\mathrm{and}\;\mathrm{supp}(u_j)\subseteq\bigcup_{n\in\Z}I_{n,j}\;\;\mathrm{for}\, j\in\{0,+,-\}.
\end{equation}

\subsection*{Function spaces} A function $u:\Gamma\times\T\to\C$, depending on $x\in\Gamma$ and periodically on $t\in\T$, is continuous and belongs to $C^0(\Gamma\times\T)$, if it is continuous on each edge $\Gamma_{n,j}\times\T$, $j\in\{0,+,-\}$, $n\in\Z$ and the following relations at the vertices are fulfilled for all $n\in \Z$ and all $t\in\T$:
\begin{align}\label{kirchhoff_cont_time}
\begin{myarray}{rlll}
u_0(2\pi n+\pi,t)&=u_+(2\pi n+\pi,t)&=u_-(2\pi n+\pi,t), \\
u_0(2\pi(n+1),t)&=u_+(2\pi(n+1),t)&=u_-(2\pi(n+1),t).
\end{myarray}
\end{align}
Sobolev spaces are defined "edge-wise". If we set $u_{j,n}:=u_{j}|_{I_{j,n}\times\T}$ then we define for $k\in \N_0$ the space
\begin{multline*}
H^k(\Gamma\times\T):=\Bigl\{u:\Gamma\times\T\to\C\;:\;u_{j,n}\in H^k(I_{j,n}\times\T)\,\forall n\in\Z,\,j\in\{0,+,-\}\, \\ \mathrm{with} \sum_{j,n}\norm{u_{j,n}}_{H^k(I_{j,n}\times\T)}^2<\infty\Bigr\}
\end{multline*}
with norm $\|u\|_{H^k}^2:= \sum_{j,n}\norm{u_{j,n}}_{H^k(I_{j,n}\times\T)}^2$. Clearly, $H^0(\Gamma\times\T)=L^2(\Gamma\times\T)$. For $k=1$ we define the coupled Sobolev space $H^1_c(\Gamma\times\T)$ by adding continuity of traces at the vertices, i.e., 
\begin{equation*}
H^1_c(\Gamma\times\T)=\{ u\in H^1(\Gamma\times\T): \eqref{kirchhoff_cont_time} \mbox{ holds in the sense of traces for all } n\in\Z\}.
\end{equation*}
This means that $H^1_c(\Gamma\times\T)$ consists of functions which belong to $H^1$ on every edge$\times\T_T$ and which have additionally traces on $V\times \T$ that are equal independently from which edge one approaches the vertex. We will also need spaces of functions with the additional symmetry between the upper and lower semicircles, i.e., functions $u:\Gamma\times\T\to\C$ with $u_+=u_-$. Spaces of functions with this additional symmetry will be denoted by $H^1_c(\Gamma\times\T)_{\text{symm}}$, $L^2(\Gamma\times\T)_{\text{symm}}$, and $C^0(\Gamma\times\T)_{\text{symm}}$. Whenever we consider spaces of functions, which are independent on time, then we write $H^k(\Gamma)$, $C^0(\Gamma)$, $L^2(\Gamma)$,  $H^1_c(\Gamma)=H^1(\Gamma)\cap C(\Gamma)$, and likewise for their symmetric versions $H^1_c(\Gamma)_{\text{symm}}$, $L^2(\Gamma)_{\text{symm}}$, and $C^0(\Gamma)_{\text{symm}}$. For functions in $H^1_c(\Gamma)$  the conditions \eqref{kirchhoff_cont_time} simplify to true continuity conditions
\begin{align}\label{kirchhoff_cont}
\begin{myarray}{rlll}
u_0(2\pi n+\pi)&=u_+(2\pi n+\pi)&=u_-(2\pi n+\pi), \\
u_0(2\pi(n+1))&=u_+(2\pi(n+1))&=u_-(2\pi(n+1))
\end{myarray}
\end{align}
for all $n\in\Z$.

\subsection*{Laplace operator and Kirchhoff conditions} For functions, which are time-independent, we define the Kirchhoff Laplace operator $\Delta$ as the second derivative operator on the edges with domain $D$ which consists of all functions $u\in H^2(\Gamma)\cap H^1_c(\Gamma)$ together with Kirchhoff conditions at the vertex points (conservation of fluxes at the vertices), i.e., for all $n\in\Z$ 
\begin{align}\label{kirchhoff_abl}
\begin{myarray}{rlll}
\frac{d u_0}{dx}(2\pi n+\pi)&=\frac{du_+}{dx}(2\pi n+\pi)+\frac{du_-}{dx}(2\pi n+\pi), \\
\frac{du_0}{dx}(2\pi(n+1))&=\frac{du_+}{dx}(2\pi(n+1))+\frac{du_-}{dx}(2\pi(n+1)).
\end{myarray}
\end{align}
Then $\Delta: \mathcal{D}(\Delta)\subset L^2(\Gamma)\to L^2(\Gamma)$ is a self-adjoint operator, cf. \cite{gilgschn}, on the domain
$$
\mathcal{D}(\Delta)=\{ u\in H^2(\Gamma)\cap H^1_c(\Gamma): u \mbox{ satisfies } \eqref{kirchhoff_abl}\}.
$$
For time-dependent functions $u\in H^2(\Gamma\times\T)\cap H^1_c(\Gamma\times\T)$, the Kirchoff-conditions are defined as 
\begin{align}\label{kirchhoff_abl_time}
\begin{myarray}{rlll}
\frac{\partial u_0}{\partial x}(2\pi n+\pi,\cdot)&=\frac{\partial u_+}{\partial x}(2\pi n+\pi,\cdot)+\frac{\partial u_-}{\partial x}(2\pi n+\pi,\cdot), \\
\frac{\partial u_0}{\partial x}(2\pi(n+1),\cdot)&=\frac{\partial u_+}{\partial x}(2\pi(n+1),\cdot)+\frac{\partial u_-}{\partial x}(2\pi(n+1),\cdot).
\end{myarray}
\end{align}
in the sense of traces for all $n\in\Z$. 

\section{Spectrum of the Kirchhoff Laplacian} \label{sec:spectrum}

\subsection{Floquet-Bloch spectrum of the Kirchhoff Laplacian}\label{subsec:spectrum}

We recall the spectral properties of the Kirchhoff Laplacian on the necklace graph, cf. \cite{gilgschn}. We begin with the definition of the Bloch waves $g=g(l,x)$ of the Kirchhoff Laplacian with $x\in \Gamma$ and $l\in (-\frac{1}{2},\frac{1}{2}]$ 
as solutions of the quasiperiodic eigenvalue problem
\begin{equation} \label{quasi_ev}
-\frac{d^2}{dx^2} g(l,x)=\lambda(l)g(l,x),\quad x\in\Gamma \mbox{ and } g(l,x+2\pi)=g(l,x)e^{il2\pi}
\end{equation}
with continuity \eqref{kirchhoff_cont} and Kirchhoff \eqref{kirchhoff_abl} conditions at the vertices. 

\medskip

The spectrum of the Kirchhoff Laplacian consists of the discrete and the absolutely continuous part. The discrete part is given by the eigenvalues $m^2$, $m\in\N$, of infinite multiplicity, and the corresponding eigenfunctions $u^{(m)}=(u^{(m)}_0, u^{(m)}_+, u^{(m)}_-)$ are given by periodicity-shifts of the function $u^{(m)}$ defined by $u^{(m)}_0\equiv 0$ and $u^{(m)}_\pm(x)=\pm \sin(mx)$ on $I_{1,\pm}$ and $u^{(m)}_\pm\equiv 0$ on $I_{n,\pm}$ for all $n\in\Z\setminus\{1\}$. The absolutely continuous part is found by using Floquet-Bloch theory and consists of $\cup_{l\in (-\frac{1}{2},\frac{1}{2}]} \cup_{m\in \Z} \lambda_m(l)$, where for each quasiperiodicity parameter $l\in (-\frac{1}{2},\frac{1}{2}]$ the set $\{\lambda_m(l)\}_{m\in \Z}$ is the set of eigenvalues of \eqref{quasi_ev} with continuity \eqref{kirchhoff_cont} and Kirchhoff \eqref{kirchhoff_abl} conditions at the vertices. As an enumeration of $\lambda_m(l)$ we use $m\in \Z$ (the reason for the indexing with $\Z$ is given below). Clearly, the spectrum of the Kirchhoff Laplacian is nonnegative.

\medskip

Instead of computing all eigenvalues $\lambda_m(l)$ one can also obtain the spectrum of the Kirchhoff Laplacian by considering the Hill discriminant, which we will define next. For $\lambda\in \R$ let $\phi_0(\lambda,\cdot)$, $\phi_1(\lambda,\cdot)$ be the fundamental system of solutions of 
\begin{equation} \label{de}
-\frac{d^2}{dx^2} \phi = \lambda \phi \mbox{ satisfying \eqref{kirchhoff_cont}, \eqref{kirchhoff_abl} }
\end{equation}
with 
$$
(\phi_0(\lambda,0), \phi_0'(\lambda,0+))=(1,0), \quad (\phi_1(\lambda,0), \phi_1'(\lambda,0+))=(0,1).
$$
Then the monodromy matrix $M(\lambda)$ is given 
$$
M(\lambda)= 
\begin{pmatrix}
\phi_0(\lambda,2\pi) & \phi_1(\lambda,2\pi) \\
\phi_0'(\lambda,2\pi+) & \phi_1'(\lambda,2\pi+)
\end{pmatrix}
$$
and satisfies for any solution $\phi$ of \eqref{de} that
$$
\begin{pmatrix} \phi \\ \phi' \end{pmatrix}(x+2\pi) = M(\lambda)\begin{pmatrix} \phi \\ \phi' \end{pmatrix}(x), \quad x\in\Gamma.
$$
The Floquet-Bloch theory states that $\lambda\in\sigma(-\frac{d^2}{dx^2})$ if and only if $\mathrm{tr}M(\lambda)\in[-2,2]$ (or equivalently: the $\C$-eigenvalues of $M(\lambda)$ have absolute value $\leq 1$). The function $\lambda \mapsto \mathrm{tr}M(\lambda)$ is called Hill discriminant. In the case of the necklace graph, see \cite{gilgschn}, it takes the particular form
\begin{equation}
\mathrm{tr}M(\lambda)=\frac{1}{4}(9\mathrm{cos}(2\pi\sqrt{\lambda})-1).
\end{equation}
Since a quasiperiodic eigenvalue $\lambda(l)$ can also be characterized by 
$$
\det(M(\lambda)-e^{i l2\pi})=0
$$
we obtain the defining equation for $\lambda(l)$ as 
\begin{equation} \label{quasiperiodic_evalues}
\lambda_m(l)=\left(\frac{1}{2\pi}\arccos\left(\frac{1}{9}(8\mathrm{cos}(2\pi l)+1)\right)+m\right)^2,\quad m\in\Z
\end{equation}
where $m\in \Z$ is therefore the natural index-set for $\lambda_m(l)$. In Figure~\ref{spectrum} we illustrate the band-gap structure of the spectrum by showing the first seven band functions $l\mapsto \lambda_m(l)$ for $m\in \{0,\pm 1, \pm 2,\pm 3\}$. Notice that we have in particular
\begin{equation}
\lambda_m\left(\frac{1}{2}\right)=m^2+\frac{\mathrm{arccos}(-7/9)}{\pi}m+\frac{\mathrm{arccos}(-7/9)^2}{4\pi^2},\quad m\in\Z.
\end{equation}

\begin{figure}
\centering
\includegraphics[scale=0.8]{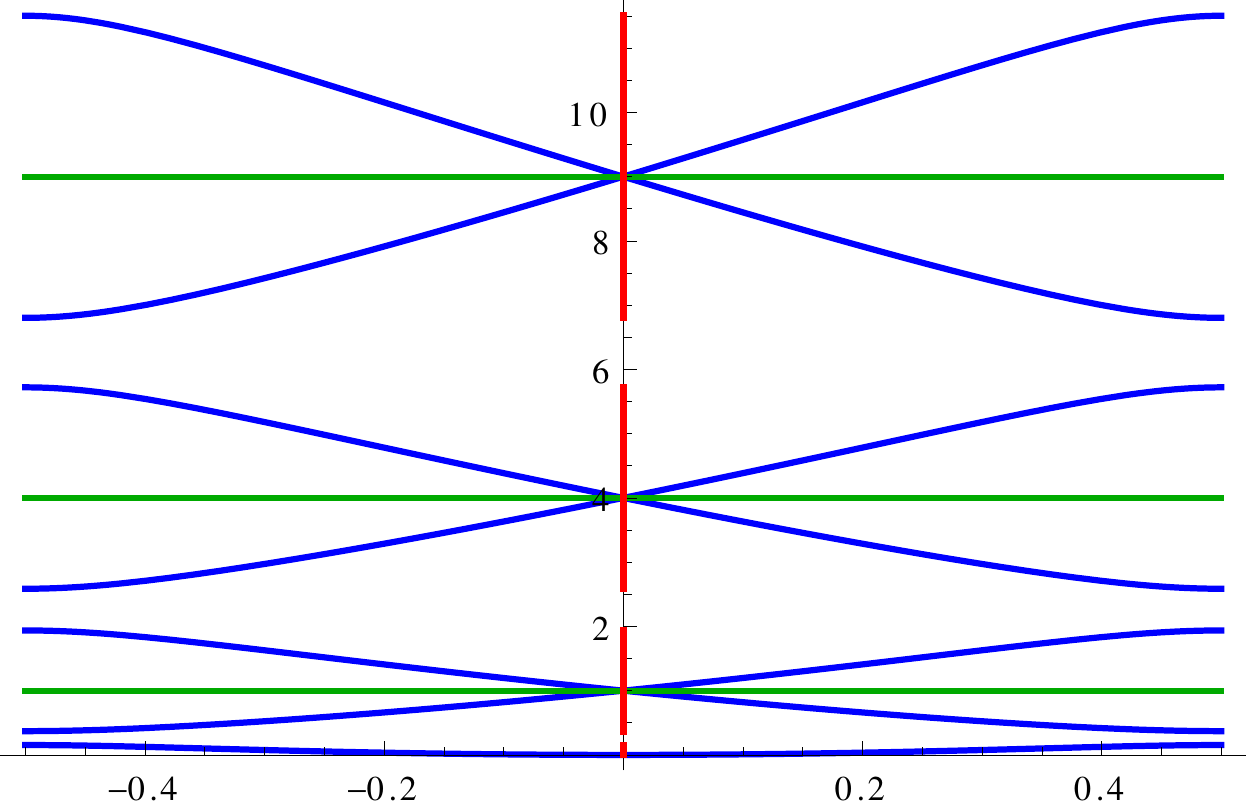}
\caption{The spectral band functions $\lambda_m(l)$ for $m\in \{0,\pm 1, \pm 2,\pm 3\}$ plotted versus the quasiperiodicity-parameter $l\in (-\frac{1}{2},\frac{1}{2}]$. The integer values $m\in \N$ generate the band functions above $m^2$, whereas $m\in -\N$ generate the band functions below $m^2$. The band functions with indices $m, -m$ meet at $m^2$ for $l=0$.}
\label{spectrum}
\end{figure}

\begin{figure}
\centering
\includegraphics{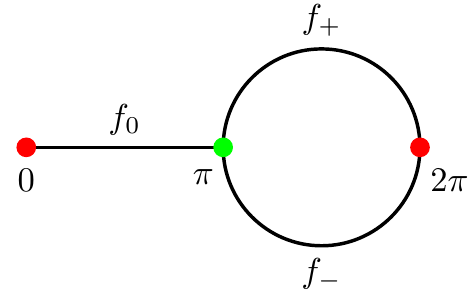}
\caption{Besides the $l$-dependent Kirchhoff vertex conditions we require continuity of the functions $f_{j,n}$ in $x=\pi$ and periodicity between $x=0$ and $x=2\pi$.}
\label{necklace_bloch}
\end{figure}

Alternatively to the quasiperiodic Bloch waves $g=g(l,x)$ one can also consider functions $f=f(l,x)$ by setting 
\begin{equation}
f(l,x) := e^{-ilx}g(l,x),\quad x\in\Gamma, l\in (-\frac{1}{2},\frac{1}{2}]
\end{equation}
Since the quasiperiodicity of $g$ implies periodicity of $f$ we find that the functions $f(l,\cdot)=(f_0,f_+,f_-)(l,\cdot)$ solve the periodic eigenvalue problem
\begin{equation}
-(\frac{d}{dx}+il)^2f(l,x)=\lambda(l)f(l,x),\quad x\in\Gamma \mbox{ and } f(l,x+2\pi)=f(l,x)
\end{equation}
subject to continuity (here we use periodicity)
\begin{equation}\label{kirchhoff_cont_l}
f_0(l,\pi)=f_+(l,\pi)=f_-(l,\pi), \qquad f_0(l,0)=f_+(l,2\pi)=f_-(l,2\pi),
\end{equation}
and $l$-dependent Kirchhoff boundary conditions (here we use periodicity), cf. Figure~\ref{necklace_bloch}
\begin{align}\label{kirchhoff_abl_l}
\begin{myarray}{rlll}
(\frac{d}{dx}+il)f_0(l,\pi)&=(\frac{d}{dx}+il)f_+(l,\pi)+(\frac{d}{dx}+il)f_-(l,\pi), \\
(\frac{d}{dx}+il)f_0(l,0)&=(\frac{d}{dx}+il)f_+(l,2\pi)+(\frac{d}{dx}+il)f_-(l,2\pi).
\end{myarray}
\end{align}
In view of \eqref{kirchhoff_cont_l} we can rephrase \eqref{kirchhoff_abl_l} as 
\begin{equation}\label{kirchhoff_abl_l_old}
\frac{df_0}{dx}(l,\pi)=\frac{df_+}{dx}(l,\pi)+\frac{df_-}{dx}(l,\pi), \qquad
\frac{df_0}{dx}(l,0)=\frac{df_+}{dx}(l,2\pi)+\frac{df_-}{dx}(l,2\pi).
\end{equation}
We shall define $-(\frac{d}{dx}+il)^2$ on a suitable domain, which will be a subspace of 
$$
L^2_{\text{per}}(\Gamma)=\left\{f:\Gamma\to\C\;:\;f_{j,n}\in L^2(I_{j,n}) \mbox{ and } f_{j,n+1}=f_{j,n} \,\forall n\in\Z,\,j\in\{0,\pm\}\right\}
$$
with scalar product  
\begin{equation}
\langle f,h \rangle_{\text{per},2} = \int_{\T_{2\pi}} f \overline{h}\,dx:=\int_0^{\pi} f_0 \overline{h_0}\,dx+\int_{\pi}^{2\pi} f_+ \overline{h_+}\,dx +\int_{\pi}^{2\pi} f_- \overline{h_-}\,dx
\end{equation}
and norm
$$
\|f\|_{\text{per},2}^2 = \langle f,f \rangle_{\text{per},2}
$$
for $f,h \in L^2_{\text{per}}(\Gamma)$. If we set 
$$
\mathcal{D}((\frac{d}{dx}+il)^2):= \left\{f\in L^2_{\text{per}}(\Gamma):\;f_{j,0}\in H^2(I_{j,0}) \forall j\in\{0,\pm\}  \mbox{ and } \eqref{kirchhoff_cont_l}, \eqref{kirchhoff_abl_l_old} \mbox{ hold} \right\}
$$
then by direct verification one sees that $-(\frac{d}{dx}+il)^2: \mathcal{D}((\frac{d}{dx}+il)^2)\subset L^2_{\text{per}}(\Gamma) \to L^2_{\text{per}}(\Gamma)$ is self-adjoint with discrete spectrum consisting of the eigenvalues $\{\lambda_m(l)\}_{m\in \Z}$ given by \eqref{quasiperiodic_evalues}.

\subsection{Bloch transform and Plancherel's identity}

The Fourier transform on the real line diagonalizes the Laplacian with the help of the generalized eigenfunctions $e^{i\xi x}$. A Fourier transform on the necklace graph cannot be properly defined due to periodic branching of the graph. However, the Bloch transform is a suitable substitute that will also diagonalize the Kirchhoff Laplacian on the necklace graph. For a function $u\in L^2(\Gamma)$, its Bloch transform $\tilde u: \R\times\Gamma\to \C$ is defined by 
\begin{equation}
\tilde{u}(l,x):=\sum_{n\in\Z} u(x+2\pi n)e^{-il(x+2\pi n)}
\end{equation}
By construction, $\tilde{u}$ is periodic in $x$ and semi-periodic in $l$, i.e.,
\begin{equation}
\tilde{u}(l,x)=\tilde{u}(l,x+2\pi),\quad \tilde{u}(l,x)=\tilde{u}(l+1,x)e^{ix},
\end{equation} 
and $\mathrm{supp}(\tilde u_j)\subseteq\bigcup_{n\in\Z}I_{n,j}$ for $j\in\{0,+,-\}$.

\begin{lem}\label{planch_lem}
The Bloch transform is an isomorphism between 
\begin{equation*}
L^2(\Gamma)\quad\mathrm{and}\quad L^2((-\tfrac{1}{2},\tfrac{1}{2}], L^2_{\text{per}}(\Gamma))
\end{equation*}
with the inverse given by 
\begin{equation}
u_j(x)=\int_{-\frac{1}{2}}^{\frac{1}{2}}e^{ilx}\tilde{u}_j(l,x) dl,\quad j\in\{0,+,-\}.
\end{equation}
In particular, Plancherel's formula 
\begin{equation}\label{planch}
\langle u,v\rangle_2=\int_{-\frac{1}{2}}^\frac{1}{2} \langle \tilde{u}(l,\cdot), \tilde v(l,\cdot)\rangle_{\rm{per},2} \,dl=:\langle \tilde u,\tilde v\rangle_2
\end{equation}
holds for $u\in L^2(\Gamma)$.
\end{lem}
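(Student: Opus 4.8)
The plan is to recognize the Bloch transform on $\Gamma$ as three copies of the classical Bloch/Zak transform on $\R$, one for each edge family $j\in\{0,+,-\}$, and to prove Plancherel's identity \eqref{planch} first on a dense subspace by a direct computation, then extend by density; the inversion formula is checked the same way. Concretely, write $u=(u_0,u_+,u_-)$ with $\supp u_j\subseteq\bigcup_{n\in\Z}I_{n,j}$. The correspondence $u\mapsto(u_0,u_+,u_-)$ identifies $L^2(\Gamma)$ isometrically with the orthogonal sum of the three closed, $2\pi$-translation-invariant subspaces $L^2\bigl(\bigcup_nI_{n,j}\bigr)\subset L^2(\R)$ (note that $I_{n,+}$ and $I_{n,-}$ are the \emph{same} interval $[2\pi n+\pi,2\pi(n+1)]$ but carry distinct components), and similarly identifies $L^2_{\text{per}}(\Gamma)$ with $L^2(I_{0,0})\oplus L^2(I_{0,+})\oplus L^2(I_{0,-})$. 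For $f\in L^2(\R)$ set $(\mathcal Tf)(l,x):=\sum_{n\in\Z}f(x+2\pi n)e^{-il(x+2\pi n)}$, so that $\tilde u=(\mathcal Tu_0,\mathcal Tu_+,\mathcal Tu_-)$. It then suffices to show $\mathcal T$ maps $L^2\bigl(\bigcup_nI_{n,j}\bigr)$ unitarily onto $L^2\bigl((-\tfrac12,\tfrac12],L^2(I_{0,j})\bigr)$ with the stated inverse. For $f$ in the dense set of functions supported in finitely many cells $I_{n,j}$ the defining sum is finite; there one checks directly that $(\mathcal Tf)(l,\cdot)$ is $2\pi$-periodic in $x$, supported in $I_{0,j}$, and satisfies the semi-periodicity $(\mathcal Tf)(l,x)=(\mathcal Tf)(l+1,x)e^{ix}$.

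For the Plancherel step, take $u,v$ supported in finitely many cells and expand
\[
\tilde u(l,x)\,\overline{\tilde v(l,x)}=\sum_{n,m\in\Z}u(x+2\pi n)\,\overline{v(x+2\pi m)}\,e^{-2\pi i l(n-m)} .
\]
Integrating over $x$ in the fundamental cell and over $l\in(-\tfrac12,\tfrac12]$, interchanging the finite sum with the integrals, and using the character orthogonality $\int_{-1/2}^{1/2}e^{-2\pi i l k}\,dl=\delta_{k,0}$ reduces this to $\sum_{n}\int_{\text{cell}+2\pi n}u\,\overline v=\int_\R u\,\overline v$ edgewise, i.e.\ $\langle\tilde u,\tilde v\rangle_2=\langle u,v\rangle_2$. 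Hence the Bloch transform is an isometry on a dense subspace of $L^2(\Gamma)$ and extends uniquely to an isometry $L^2(\Gamma)\to L^2\bigl((-\tfrac12,\tfrac12],L^2_{\text{per}}(\Gamma)\bigr)$; this is precisely \eqref{planch}, and in particular it forces $\tilde u(l,\cdot)\in L^2_{\text{per}}(\Gamma)$ for a.e.\ $l$ together with measurability of $l\mapsto\tilde u(l,\cdot)$.

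For surjectivity and the inversion formula, define $\mathcal S$ by $(\mathcal Sg)_j(x):=\int_{-1/2}^{1/2}e^{ilx}g_j(l,x)\,dl$. For $g(l,\cdot)$ a trigonometric polynomial in $l$ with coefficients in $L^2_{\text{per}}(\Gamma)$ one sees $\mathcal S$ is bounded, and for $u$ in the dense subspace above one computes $\mathcal S(\tilde u)=u$ by reducing $x$ to the fundamental cell using the $2\pi$-periodicity of $\tilde u(l,\cdot)$ in $x$ and invoking $\int_{-1/2}^{1/2}e^{2\pi i l k}\,dl=\delta_{k,0}$; an analogous computation gives $\widetilde{(\mathcal Sg)}=g$ for $g$ in a dense subspace of the target. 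By continuity $\mathcal S$ is the two-sided inverse of the Bloch transform, which is therefore the claimed isomorphism with the stated inverse.

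The genuinely analytic content here is slim — essentially orthogonality of the characters $e^{2\pi i l n}$ on $(-\tfrac12,\tfrac12]$ and Fubini on compactly supported data — so the main obstacle is bookkeeping rather than analysis: correctly identifying $L^2(\Gamma)$ and $L^2_{\text{per}}(\Gamma)$ with the sums of $L^2$-spaces over the three edge families, carrying the support condition $\supp\tilde u_j\subseteq\bigcup_nI_{n,j}$ and the resulting fact that $\tilde u(l,\cdot)$ lives on one cell through every step, and verifying the periodicity in $x$ and semi-periodicity in $l$. The only point that needs a limiting argument rather than a pointwise estimate is membership $\tilde u(l,\cdot)\in L^2_{\text{per}}(\Gamma)$ for general $u\in L^2(\Gamma)$, and this is obtained for free from the isometry once it has been extended by density. (One could alternatively shortcut the whole argument by quoting the unitarity of the Zak transform on $L^2(\R)$ and restricting to the translation-invariant subspaces $L^2(\bigcup_nI_{n,j})$, cf.\ \cite{gilgschn}.)
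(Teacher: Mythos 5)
Your argument is sound, but note that the paper itself offers no proof of Lemma~\ref{planch_lem}: it is stated as a known fact about the Bloch (Zak) transform, in the spirit of \cite{gilgschn}, and the text moves straight on to Remark~\ref{planch_ableitungen}. So there is no ``paper proof'' to compare against; what you have written is the standard unitarity argument, and it is essentially correct. Your bookkeeping is the right one: splitting $L^2(\Gamma)$ into the three edge families, using that $\tilde u_j(l,\cdot)$ is $2\pi$-periodic and supported in $\bigcup_n I_{n,j}$ so that the fundamental-cell integrals in $\langle\cdot,\cdot\rangle_{\mathrm{per},2}$ (over $[0,\pi]$ for $j=0$ and $[\pi,2\pi]$ for $j=\pm$) capture everything, and then character orthogonality $\int_{-1/2}^{1/2}e^{-2\pi i l(n-m)}\,dl=\delta_{n,m}$ on compactly supported data gives the isometry; the inversion identity on the same dense class is the analogous one-line computation. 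The only place you are a bit quick is the surjectivity step: for a trigonometric polynomial $g(l,x)=\sum_{|N|\le M}e^{2\pi i lN}h_N(x)$ the function $\mathcal Sg$ is no longer compactly supported (its restriction to the $n$-th cell decays like $|n|^{-1}$), so ``$\mathcal S$ is bounded'' and ``$\widetilde{\mathcal Sg}=g$'' each need a short justification (square-summability over cells, and an exchange of the $l$-integral with the cell sum). A slightly cleaner finish avoiding this: since the Bloch transform is an isometry, its range is closed, and it contains all functions of the form $e^{-il(y+2\pi n)}h(y)$ with $h\in L^2_{\mathrm{per}}(\Gamma)$ (take $u$ equal to one periodicity cell of $h$ placed in the $n$-th cell); finite linear combinations of these are dense in $L^2\bigl((-\tfrac12,\tfrac12],L^2_{\mathrm{per}}(\Gamma)\bigr)$ because trigonometric polynomials in $l$ are dense in $L^2(-\tfrac12,\tfrac12]$, so the transform is onto, and the inverse formula then follows from your computation on the dense class by continuity of the (now unitary) inverse. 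With that small repair your proof is complete.
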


\begin{rem} \label{planch_ableitungen}
For $u\in H^1_c(\Gamma)$ we have $\norm{\frac{du}{dx}}_2=\norm{\widetilde{\frac{du}{dx}}}_2 = \norm{(\frac{d}{dx}+il)\tilde u}_2$. For $u\in \mathcal D(\Delta)$ we have $\tilde u(\cdot,l) \in \mathcal{D}((\frac{d}{dx}+il)^2)$ for all $l\in (-\tfrac{1}{2},\tfrac{1}{2}]$, and thus, if additionally $v\in L^2(\Gamma)$, then $\langle -\frac{d^2u}{dx^2},v\rangle_2 = \int_{-\frac{1}{2}}^\frac{1}{2}\langle -(\frac{d}{dx}+il)^2 \tilde u(\cdot,l),\tilde v(\cdot,l)\rangle_{\textnormal{per},2}\,dl$. 
\end{rem}

\subsection{Expansion in eigenfunctions of the Laplacian}

Since for every fixed $l\in (-\tfrac{1}{2},\tfrac{1}{2}]$, the operator $-(\frac{d}{dx}+il)^2$ is self-adjoint on $\mathcal{D}(-(\frac{d}{dx}+il)^2)\subset L^2_{\text{per}}(\Gamma)$ we can expand $\tilde{u}(l,\cdot)$ in $L^2$-normalized eigenfunctions $\phi_m(l,\cdot)=\phi_m(l,x)$ of $-(\frac{d}{dx}+il)^2$ as follows
\begin{equation}\label{exp}
\tilde{u}(l,x)=\sum_{m\in\Z}\tilde{u}_m(l)\phi_m(l,x)
\end{equation}
with coefficients $\tilde{u}_m(l)=\int_{\T_{2\pi}} \tilde u(l,x) \overline{\phi_m(l,x)}\,dx\in\C$, $m\in\Z$. 
In particular this expansion diagonalizes the Kirchhoff Laplacian in the following way for $u\in \mathcal{D}(\Delta)$:
\begin{equation}\label{exp'}
-\widetilde{\frac{d^2}{dx^2} u(x)}= -(\frac{d}{dx}+il)^2\tilde{u}(l,x)=\sum_{m\in\Z}\tilde{u}_m(l)\lambda_m(l)\phi_m(l,x).
\end{equation}
Using \eqref{planch} and Remark~\ref{planch_ableitungen} we obtain
\begin{equation}\label{plan_eig}
\norm{u}_2^2 = \int_{-\frac{1}{2}}^{\frac{1}{2}}\sum_{m\in\Z}\abs{\tilde{u}_m(l)}^2\,dl, \quad \norm{\frac{du}{dx}}_2^2 =\int_{-\frac{1}{2}}^{\frac{1}{2}}\sum_{m\in\Z}\abs{\tilde{u}_m(l)}^2\lambda_m(l)dl.
\end{equation}
The eigenfunctions $\phi_m(l,\cdot)$ can be chosen in such a way that they depend in a measurable way on $l$, cf. \cite[XIII.16, Theorem XIII.98]{reed_simon_4}. As a consequence of the min-max principle, the eigenvalues then also depend on $l$ measurably.

The result stated next, which will be used in a crucial way when we study embedding properties cf. Theorem~\ref{hlp}, is about the uniform boundedness of the $2\pi$-periodic eigenfunctions and follows from a result in \cite{komornik}.

\begin{lem}\label{eigen_bound}
The $2\pi$-periodic eigenfunctions $\phi_m(l,\cdot)$ of the operators $-(\frac{d}{dx}+il)^2$, normalized by $\norm{\phi_m(l,\cdot)}_{L^2_\text{per}(\Gamma)}=1$,  are uniformly bounded in $m$,$l$ and $x$, i.e., 
\begin{equation}
\|\phi_m(l,\cdot)\|_{L^\infty(\Gamma)} \leq \frac{12}{\sqrt{\pi}}.
\end{equation}
\end{lem}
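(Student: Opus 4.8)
The plan is to pass from the periodic eigenfunctions to the quasiperiodic Bloch waves and to exploit a conserved ``energy'' of the eigenvalue equation along each edge. Fix $m,l$, abbreviate $\phi:=\phi_m(l,\cdot)$ and $\lambda:=\lambda_m(l)\ge 0$, and set $g(x):=e^{ilx}\phi(x)$ on $\Gamma$. Then on each edge $g$ solves $-g''=\lambda g$ classically (it lies in $H^2$ there, hence is $C^1$ up to the vertices), while $g'=e^{ilx}(\tfrac{d}{dx}+il)\phi$, so that $|g|=|\phi|$ and $|g'|=|(\tfrac{d}{dx}+il)\phi|$ pointwise, and $g$ is quasiperiodic, $g(x+2\pi)=e^{2\pi il}g(x)$. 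Hence $|g|$ is $2\pi$-periodic on $\Gamma$ and $\|\phi\|_{L^\infty(\Gamma)}=\sup_\Omega|g|$ for the single period cell $\Omega:=\Gamma_{0,0}\cup\Gamma_{0,+}\cup\Gamma_{0,-}$; moreover $\|g\|_{L^2(\Omega)}^2=\|\phi\|_{L^2_{\text{per}}(\Gamma)}^2=1$, and since $-(\tfrac{d}{dx}+il)^2$ is self-adjoint (Section~\ref{subsec:spectrum}) with eigenfunction $\phi$,
\[
\|g'\|_{L^2(\Omega)}^2=\|(\tfrac{d}{dx}+il)\phi\|_{\text{per},2}^2=\langle-(\tfrac{d}{dx}+il)^2\phi,\phi\rangle_{\text{per},2}=\lambda .
\]

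If $\lambda=0$ --- which by \eqref{quasiperiodic_evalues} occurs only for $l=0$, $m=0$ --- then $\|g'\|_{L^2(\Omega)}=0$ forces $g$ constant, and normalization gives $\|\phi\|_{L^\infty(\Gamma)}=(3\pi)^{-1/2}\le 12/\sqrt\pi$. For $\lambda>0$ I would introduce, on each edge $\Gamma_{0,j}$, $j\in\{0,+,-\}$, the quantity $E(x):=|g(x)|^2+\lambda^{-1}|g'(x)|^2$; from $g''=-\lambda g$ one gets $\tfrac{d}{dx}E=2\RT(\bar g g')+2\lambda^{-1}\RT(\bar g'g'')=2\RT(\bar g g')-2\RT(\bar g'g)=0$, so $E$ is a nonnegative constant $E_j$ on $\Gamma_{0,j}$. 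Integrating over $\Gamma_{0,j}$ (which has length $\pi$) gives $\pi E_j=\|g\|_{L^2(\Gamma_{0,j})}^2+\lambda^{-1}\|g'\|_{L^2(\Gamma_{0,j})}^2$, and summing over $j$ together with the two displayed identities yields $\pi\sum_j E_j=1+1=2$, so $E_j\le 2/\pi$ for every $j$. Since $|g(x)|^2\le E(x)=E_j$ on $\Gamma_{0,j}$, it follows that $\|\phi_m(l,\cdot)\|_{L^\infty(\Gamma)}=\sup_\Omega|g|\le\sqrt{2/\pi}$, which is in fact stronger than the claimed bound $12/\sqrt\pi$.

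The argument is short; the key points are spotting the edge-wise conserved energy $E$ and noticing that $\pi\sum_jE_j$ collapses to $\|g\|_{L^2(\Omega)}^2+\lambda^{-1}\|g'\|_{L^2(\Omega)}^2=2$. The only identity that needs care is $\|g'\|_{L^2(\Omega)}^2=\lambda$: I have deduced it from self-adjointness of $-(\tfrac{d}{dx}+il)^2$, but one can also verify it directly by integrating $-g''\bar g$ by parts on the three edges of $\Omega$ and checking that the boundary terms cancel at both of its vertices --- which is precisely where the continuity conditions \eqref{kirchhoff_cont}, the Kirchhoff conditions \eqref{kirchhoff_abl}, and (at the vertex shared with the neighbouring cell) the quasiperiodicity of $g$ enter. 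Note that beyond the edge length $\pi$ and the vertex conditions the proof uses nothing specific about the necklace, and it requires no case distinction in $\lambda$ apart from the trivial value $\lambda=0$.
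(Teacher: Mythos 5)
Your argument is correct, and it takes a genuinely different route from the paper. The paper's proof is a one-line localization: on each edge the function $e^{ilx}\phi_m(l,x)$ solves $-\psi''=\lambda\psi$ with $\lambda\ge 0$ on an interval of length $\pi$, so the ready-made $L^\infty$--$L^2$ bound of Theorem~2.1 in \cite{komornik} applies edge-wise and yields the constant $12/\sqrt{\pi}$ after estimating the edge-wise $L^2$-norm by the full normalization. You instead exploit the explicit structure of the equation: the edge-wise conserved quantity $E=|g|^2+\lambda^{-1}|g'|^2$, integrated over the three edges of one period cell, collapses to $\|g\|_{L^2(\Omega)}^2+\lambda^{-1}\|g'\|_{L^2(\Omega)}^2=2$ once you use the normalization and the identity $\|(\tfrac{d}{dx}+il)\phi\|_{\mathrm{per},2}^2=\lambda$; the latter is exactly where the continuity conditions \eqref{kirchhoff_cont}, the Kirchhoff conditions \eqref{kirchhoff_abl} and the quasiperiodicity enter (the boundary terms at the cell vertex shared with the neighbouring cell cancel because $g$ and $g'$ both pick up the same factor $e^{2\pi i l}$), and your treatment of the exceptional case $\lambda=0$ (only $m=0$, $l=0$ by \eqref{quasiperiodic_evalues}) is also fine. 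What each approach buys: the paper's proof is shorter and needs no global information at all (only $\lambda\ge0$ and the edge length), at the price of invoking an external result and a larger constant; your proof is self-contained and elementary, and it yields the sharper bound $\sqrt{2/\pi}$, which is consistent with the explicit periodic eigenfunctions listed in Section~\ref{sec:functional_framework} (e.g.\ the normalized $l=0$ eigenfunction has sup-norm $2/\sqrt{3\pi}<\sqrt{2/\pi}$). Since only some uniform constant is needed downstream in Theorem~\ref{hlp}, both arguments serve equally well.
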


\begin{proof} 
On each edge $\Gamma_{n,j}$ with $n\in\Z$ and $j\in \{0,+,-\}$ we see that the function $\psi_m(l,\cdot):= \phi_m(l,\cdot)e^{ilx}|_{\Gamma_{n,j}}$ satisfies $-\psi''=\lambda_m(l)\psi$ on an interval of length $\pi$ with $\lambda_m(l)\geq 0$. Hence we may apply Theorem~2.1. in \cite{komornik} and obtain $\norm{\phi}_{L^\infty(\Gamma_{n,j})}\leq \frac{12}{\sqrt{2\pi}}\norm{\phi}_{L^2(\Gamma_{n,j})}\leq \frac{12}{\sqrt{\pi}}$.
\end{proof}

\section{The functional analytic framework for breathers} \label{sec:functional_framework}

Since we are looking for time-periodic solutions of \eqref{waveeq} it is natural to make an ansatz by Fourier-expanding w.r.t. time 
\begin{equation}
u(x,t)=\sum_{k\in \kappa\Zodd} u_k(x) e^{i\omega kt}
\end{equation}
where $\omega=2\pi/T$ is the temporal frequency, $T$ the temporal period of the ansatz, and $\kappa\in\Zodd$ is an integer to be chosen later. For $u$ to be real we require $\overline{u_k}=u_{-k}$ for all $\kappa\in\Zodd$, and furthermore we require the symmetry $u_{k,+}=u_{k,-}$, i.e., the coefficient functions $u_k$ are symmetric w.r.t. the upper and lower semicircles of the necklace graph.

\medskip

As we shall see, $T$ will be chosen as an integer fraction of $4\pi$ so that all our solutions will be $4\pi$-periodic.  For reasons we will explain later here we have chosen $k\in \kappa\Zodd\subset \Zodd$ instead of $k\in \Z$. This implies that $u$ is $\frac{T}{2\kappa}$-antiperiodic in time, which is compatible with the odd nonlinearity in \eqref{waveeq}. 
The above ansatz decomposes the Klein-Gordon operator $L=\partial_t^2-\partial_x^2+\alpha$ into a family of shifted Kirchhoff Laplacians 
\begin{equation}
L_k :=-\frac{d^2}{dx^2}-\omega^2k^2+\alpha, \quad k\in \kappa\Zodd,
\end{equation}
with domain $\mathcal{D}(L_k)=\mathcal{D}(\Delta)_{\text{symm}}$. Notice that the symmetry between the upper and lower semicircles of the necklace graph is built into the domain of the operator $L_k$. Then, $L_k:\mathcal{D}(L_k)\subset L^2(\Gamma)_{\text{symm}}\to L^2(\Gamma)_{\text{symm}}$ is again a self-adjoint operator. However, now $L_k$ has lost its discrete spectrum, since the discrete eigenvalues (of infinite multiplicity) from Section~\ref{subsec:spectrum} have eigenfunctions which are antisymmetric w.r.t. to the upper/lower-semicircles. Moreover, the quasiperiodic eigenvalues of $L_k$ are given by $\lambda_m(l)-\omega^2k^2+\alpha$ with $\lambda_m(l)$ from \eqref{quasiperiodic_evalues}. For $l\not =0$ the quasiperiodic eigenvalues have multiplicity one and the corresponding eigenfunctions $\phi_m(l,\cdot)$ in the expansion \eqref{exp} are symmetric w.r.t. the upper/lower-semicircles. For $l=0$ the quasiperiodic eigenvalue is in fact a periodic eigenvalue given by $m^2-\omega^2k^2+\alpha$ with $m\in\Z$ and without symmetry restrictions it has multiplicity three. One periodic eigenfunction is  antisymmetric with $\phi_0=0$ on $\Gamma_{n,0}$ and $\phi_\pm(x) = \pm \sin(mx)$ on $\Gamma_{n,\pm}$. Two linearly independent symmetric periodic eigenfunctions are given by $\phi_0(x)=2\sin(mx)$ on $\Gamma_{n,0}$, $\phi_\pm(x) =\sin(mx)$ on $\Gamma_{n,\pm}$ and $\phi_0(x)=\cos(mx)$ on $\Gamma_{n,0}$, $\phi_\pm(x) = \cos(mx)$ on $\Gamma_{n,\pm}$. By applying the symmetry, the operator $L_k$ has only continuous spectrum where the quasiperiodic eigenvalues $\lambda_m(l)-\omega^2k^2+\alpha$ have multiplicity one for $l\not =0$ and multiplicity two for $l=0$. Our choice of $\alpha, \omega$ and $\kappa$ will be such that the following key assumption is satisfied
\begin{equation} \label{spec}
0\not\in \overline{\bigcup_{k\in\kappa\Zodd} \sigma(L_k)} \tag{Spec}
\end{equation}
where $\sigma(L_k)$ is the spectrum of $L_k$. Knowing the structure of the spectrum of $L_k$ in terms of quasiperiodic Bloch eigenvalues the condition \eqref{spec} is equivalent to 
\begin{equation} \label{spec_equiv}
\delta^\ast :=\inf\left(\Bigl|\lambda_m(l)-\omega^2 k^2+\alpha\Bigr|: l\in (-\frac{1}{2},\frac{1}{2}], m\in\Z, k\in \kappa\Zodd\right)>0.
\end{equation}
The associated semi-bounded, closed, Hermitian sesquilinear form of $L_k$ is given by
\begin{equation}
b_{L_k}(v,w)=\int_{\Gamma} v' \overline{w'}+(-\omega^2k^2+\alpha)v\overline{w} \,dx
\end{equation}
with domain $\mathcal{D}(b_{L_k})=H^1_c(\Gamma)_{\text{symm}}$. Functional calculus, cf. \cite[Chapter VIII,6]{reed_simon}, also provides the bilinear form $b_{|L_k|}: H^1_c(\Gamma)_{\text{symm}}\times H^1_c(\Gamma)_{\text{symm}}\to \C$ and \eqref{plan_eig} implies 
\begin{align}
b_{L_k}(v,w) & = \int_{-\frac{1}{2}}^{\frac{1}{2}} \sum_{m\in\Z} \tilde v_m(l)\overline{\tilde w_m(l)} (\lambda_m(l)-\omega^2k^2+\alpha)\,dl,\\
b_{|L_k|}(v,w) &= \int_{-\frac{1}{2}}^{\frac{1}{2}} \sum_{m\in\Z} \tilde v_m(l)\overline{\tilde w_m(l)} |\lambda_m(l)-\omega^2k^2+\alpha|\,dl
\end{align}
for all $v,w \in H^1_c(\Gamma)_{\text{symm}}$.

\begin{definition}
Define the Hilbert space $\mathcal{H}$ by
\begin{equation}
\mathcal{H}=\left\{u=\sum_{k\in \kappa\Zodd}u_ke^{i\omega kt}:\;u_k\in H^1_c(\Gamma)_{\text{symm}},\, \overline{u_k}=u_{-k}\,\forall k\in \kappa\Zodd\,\mathrm{with}\, \|u\|_{\mathcal{H}}<\infty\right\}
\end{equation}
and where the norm is given by 
\begin{equation*}
\|u\|^2_{\mathcal{H}} := \sum_{k\in \kappa\Zodd}b_{\abs{L_k}}(u_k,u_k) = \int_{-\frac{1}{2}}^{\frac{1}{2}} \sum_{m\in\Z}\sum_{k\in\kappa\Zodd} |(\tilde u_k)_m(l)|^2 |\lambda_m(l)-\omega^2k^2+\alpha|\,dl.
\end{equation*}
\end{definition}

\begin{rem} \label{dense} Let us explain why there is a dense subset of $\mathcal{H}$, whose members have compact support. First note that functions $u=\sum_{\substack{k\in \kappa\Zodd\\ |k|\leq K}}u_ke^{i\omega kt}$ with $u_k\in H^1_c(\Gamma)_{\text{symm}}$ and $\overline{u_k}=u_{-k}$ for all $k\in \kappa\Zodd$ with $|k|\leq K$ and $K\in \N$ arbitrary, are dense in $\mathcal{H}$. Since $H^1_c(\Gamma)_{\text{symm}}$-functions can be approximated by functions with compact support, the claim follows.
\end{rem}

On the space $\mathcal{H}$ we define the functional
$$
J[u] = \sum_{k\in\kappa\Zodd} b_{L_k}(u_k,u_k) \mp \tfrac{2}{p+1}\int_0^T\int_\Gamma |u(x,t)|^{p+1} \,dx\,dt
$$
and we shall apply critical point theory from variational calculus to show the existence of $u\in \mathcal{H}$ such that $J'[u]=0$. This function $u$ is then shown to be a weak solution of \eqref{waveeq} in the sense of Definition~\ref{def_weak_sol}. In order to have $J$ as a well-defined Fr\'{e}chet-differentiable functional we need to show the embedding from $\mathcal{H}$ into $L^{p+1}(\Gamma\times\T_T)$, where $\T_T$ is the one dimensional torus of length $T$. This is shown in the subsequent embedding theorem. The abstract condition on the quasiperiodic Bloch eigenvalues is validated afterwards in Lemma~\ref{specific_values} by the particular choice of the admissible values of $\alpha$ and the frequency $\omega$. 

\begin{thm}\label{hlp} Assume that \eqref{spec} holds and suppose $\sigma>1$ and $K\in \kappa\Nodd$ is such that
\begin{equation} \label{convergence}
\int_{-\frac{1}{2}}^{\frac{1}{2}} \sum_{m\in\Z}\sum_{\substack{k \in \kappa\Zodd\\ |k|> K}} |\lambda_m(l)-\omega^2k^2+\alpha|^{-\sigma} \,dl<\infty. 
\end{equation}
Then there is a continuous embedding $\mathcal{H}\to L^q(\Gamma\times \T_T)$ for $q\in [2,\frac{2\sigma}{\sigma-1}]$, and for any finite subgraph $\Gamma_N = \oplus_{|n|\leq N}\Gamma_n$ the embedding $\mathcal{H}\to L^q(\Gamma_N\times \T_T)$ is compact. 
\end{thm}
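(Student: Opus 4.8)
\textbf{Proof strategy for Theorem~\ref{hlp}.}

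The plan is to prove the $L^q$-embedding first for the endpoint $q = \frac{2\sigma}{\sigma-1}$ and then interpolate with the trivial $L^2$-bound coming from the definition of $\|\cdot\|_{\mathcal H}$ together with \eqref{spec}. The central object is the double expansion of $u\in\mathcal H$: writing $u = \sum_{k\in\kappa\Zodd} u_k e^{i\omega kt}$ and expanding each $\tilde u_k(l,\cdot) = \sum_{m\in\Z} (\tilde u_k)_m(l)\phi_m(l,x)$, one has via the Bloch inversion formula of Lemma~\ref{planch_lem} a representation
\begin{equation*}
u(x,t) = \sum_{k\in\kappa\Zodd}\int_{-1/2}^{1/2} e^{ilx}\sum_{m\in\Z} (\tilde u_k)_m(l)\phi_m(l,x)\,dl\; e^{i\omega kt}.
\end{equation*}
First I would split $u = u^{\leq K} + u^{>K}$ according to $|k|\leq K$ and $|k|>K$. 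For the low-frequency part $u^{\leq K}$, only finitely many $k$ occur; since on $\Gamma_N$ the operator $-d^2/dx^2$ has, after restriction, effectively discrete spectrum and each $u_k\in H^1_c(\Gamma)_{\text{symm}}$, standard Sobolev embedding $H^1(\Gamma_N\times\T_T)\hookrightarrow L^q$ for all $q<\infty$ handles this term, and on all of $\Gamma$ one uses that $u_k\in H^1_c(\Gamma)\hookrightarrow L^q(\Gamma)$ for $q\in[2,\infty]$ by the $1$-dimensional Sobolev embedding edge-wise together with the continuity of traces — so the low-frequency part is the easy part and in fact gives the embedding into every $L^q$, $q\in[2,\infty)$, as claimed in the sentence after the theorem. (Note \eqref{convergence} with $|k|>K$ is exactly what allows one to discard the finitely many ``bad'' $k$ near resonance.)

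The real work is the high-frequency tail $u^{>K}$. Here I would estimate $\|u^{>K}\|_{L^q(\Gamma\times\T_T)}$ using Lemma~\ref{eigen_bound}: because $\|\phi_m(l,\cdot)\|_{L^\infty(\Gamma)}\le 12/\sqrt\pi$ uniformly, the pointwise bound
\begin{equation*}
|u^{>K}(x,t)| \le \frac{12}{\sqrt\pi}\sum_{\substack{k\in\kappa\Zodd\\|k|>K}}\int_{-1/2}^{1/2}\sum_{m\in\Z} |(\tilde u_k)_m(l)|\,dl
\end{equation*}
holds for a.e.\ $(x,t)$, giving an $L^\infty$ bound on $u^{>K}$ provided the right-hand side converges. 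By Cauchy--Schwarz, inserting and removing the weight $|\lambda_m(l)-\omega^2k^2+\alpha|$, the sum is bounded by $\|u\|_{\mathcal H}$ times $\bigl(\int_{-1/2}^{1/2}\sum_m\sum_{|k|>K}|\lambda_m(l)-\omega^2k^2+\alpha|^{-1}\,dl\bigr)^{1/2}$; when $\sigma\le 1$ this would diverge, but for $\sigma>1$ one must be more careful and \emph{not} crudely bound the weight by $1$. Instead I would use a Hausdorff--Young / Hölder argument in the $(m,l,k)$-variables: interpolating between the $L^\infty$-type estimate just described and the $L^2$ isometry $\|u\|_{L^2}=\|u\|_{L^2(\text{Bloch})}$, one obtains for the conjugate exponents $\frac1q+\frac1{q'}=1$ that $\|u^{>K}\|_{L^q} \lesssim \bigl(\sum\int |(\tilde u_k)_m(l)|^{q'}\bigr)^{1/q'}$ (a Hausdorff--Young inequality valid because the Bloch eigenfunctions are uniformly $L^\infty$-bounded and the $(k,l,m)$ index the relevant ``Fourier-type'' modes), and then another Hölder splitting of $|(\tilde u_k)_m|^{q'} = \bigl(|(\tilde u_k)_m|^2 w\bigr)^{q'/2}\cdot w^{-q'/2}$ with $w = |\lambda_m(l)-\omega^2k^2+\alpha|$ reduces the estimate to $\|u\|_{\mathcal H}^{q'}$ times $\bigl(\int\sum\sum w^{-\sigma}\bigr)^{\text{something}}$ precisely when $q' $ and the exponent on $w^{-1}$ match up so that $w$ is raised to the power $-\sigma$; solving $\frac{q'/2}{1-q'/2} = \sigma$ gives $q = \frac{2\sigma}{\sigma-1}$, which is exactly the endpoint in the statement. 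Interpolating this endpoint with the $L^2$-bound covers all of $q\in[2,\frac{2\sigma}{\sigma-1}]$.

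For the compactness statement on $\Gamma_N\times\T_T$, the plan is a standard truncation-plus-Rellich argument: given a bounded sequence $(u^{(j)})\subset\mathcal H$, split each $u^{(j)} = u^{(j),\leq K} + u^{(j),>K}$. The tails $u^{(j),>K}$ are uniformly small in $L^q(\Gamma_N\times\T_T)$ in the \emph{operator-norm} sense after possibly enlarging $K$ — more precisely, one shows $\|u^{(j),>K}\|_{L^q(\Gamma_N\times\T_T)}\to 0$ as $K\to\infty$ uniformly in $j$, by the convergence of the series \eqref{convergence} (the tail of a convergent series). The finitely-many-mode parts $u^{(j),\leq K}$ live, after restriction to $\Gamma_N$, in a bounded subset of $H^1(\Gamma_N\times\T_T)$ — here one uses that on the compact subgraph the $b_{|L_k|}$-norm controls the full $H^1$-norm (because the weight $|\lambda_m(l)-\omega^2k^2+\alpha|$ is, for each fixed $k$, comparable to $1+\lambda_m(l)$ up to constants depending on $k$ and $\delta^\ast$) — so by the Rellich--Kondrachov theorem on the compact manifold-with-boundary $\Gamma_N\times\T_T$ they have an $L^q$-convergent subsequence. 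A diagonal argument over $K\to\infty$ then yields an $L^q(\Gamma_N\times\T_T)$-convergent subsequence of $(u^{(j)})$.

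\textbf{Main obstacle.} The delicate point is the high-frequency estimate: one cannot afford to throw away the spectral weight $|\lambda_m(l)-\omega^2k^2+\alpha|$ (that would force $\sigma=1$ and only give $q=\infty$ trivially, or $q=2$), so the Hausdorff--Young step must be performed with the weight carried through the Hölder splitting, and the bookkeeping of exponents — making $q'$, the power of the weight, and $\sigma$ all fit together to land exactly on $\frac{2\sigma}{\sigma-1}$ — is where the real care is needed. A secondary subtlety is justifying the Hausdorff--Young-type inequality in this non-standard setting (Bloch variables on a metric graph plus a temporal Fourier series), which is where Lemma~\ref{eigen_bound} is indispensable: the uniform $L^\infty$-bound on the eigenfunctions is precisely what replaces the trivial bound $|e^{i\xi x}|=1$ available in the classical Hausdorff--Young inequality on $\R$.
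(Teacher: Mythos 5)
Your proposal follows essentially the same route as the paper's proof: split at $|k|=K$, handle the finitely many low modes via the equivalence of $b_{|L_k|}$ with the $H^1(\Gamma)$-norm and one-dimensional Sobolev embedding, and handle the tail by interpolating (Riesz--Thorin) between the $L^2$-isometry of the Bloch--Fourier representation and the $L^1\to L^\infty$ bound furnished by Lemma~\ref{eigen_bound}, followed by the weighted H\"older splitting with $w=|\lambda_m(l)-\omega^2k^2+\alpha|$ whose exponent bookkeeping ($\frac{q'}{2-q'}=\sigma$) lands exactly on $q^*=\frac{2\sigma}{\sigma-1}$, then interpolation with $L^2$ for the full range. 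Your compactness argument (uniform smallness of the $|k|>K$ tails from \eqref{convergence} plus Rellich on $\Gamma_N\times\T_T$ for the low-mode part and a diagonal argument) is a minor, equally valid variant of the paper's truncation of the operator $\mathcal{T}$ in both $m$ and $k$ with operator-norm convergence.
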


\begin{proof} First note that \eqref{spec} implies the embedding $\mathcal{H}\to L^2(\Gamma\times \T_T)$. 

\medskip

For $s\in [1,\infty)$ let $L^{s}(\kappa\Zodd\times\Z\times (-\frac{1}{2},\frac{1}{2}])$ be the space of measurable maps $\hat{\tilde u}: \kappa\Zodd\times\Z\times (-\frac{1}{2},\frac{1}{2}]\to\C$ with norm 
$$
\norm{\hat{\tilde u}}_{L^{s}}^{s} := \int_{-\frac{1}{2}}^\frac{1}{2}\sum_{m\in\Z}\sum_{k\in\kappa\Zodd} |(\tilde u_k)_m(l)|^s\,dl.
$$
Then the map 
$$
\mathcal{T}: \hat{\tilde u}\to u \mbox{ with } u(x,t):=\int_{-\frac{1}{2}}^\frac{1}{2} \sum_{m\in\Z}\sum_{k\in\kappa\Zodd} (\tilde u_k)_m(l) \phi_m(l,x) e^{i(lx+\omega kt)}\,dl
$$
is an isometry from $L^2(\kappa\Zodd\times\Z\times(-\frac{1}{2},\frac{1}{2}])$ to $L^2(\Gamma\times \T_T)$ by \eqref{plan_eig}. It extends as a bounded linear operator from $L^1(\kappa\Zodd\times\Z\times(-\frac{1}{2},\frac{1}{2}])$ to $L^\infty(\Gamma\times \T_T)$ if we make use of Lemma~\ref{eigen_bound} and estimate as follows
\begin{align*}
|u(x,t)| & = |\int_{-\frac{1}{2}}^\frac{1}{2} \sum_{m\in\Z}\sum_{k\in\kappa\Zodd} (\tilde u_k)_m(l) \phi_m(l,x) e^{i(lx+\omega kt)}\,dl| \\
& \leq \frac{12}{\sqrt{\pi}} \int_{-\frac{1}{2}}^\frac{1}{2} \sum_{m\in\Z}\sum_{k\in\kappa\Zodd} |(\tilde u_k)_m(l)\,dl|.
\end{align*}
By applying the interpolation theorem of Riesz-Thorin we see that $\mathcal{T}$ extends as a bounded linear operator from $L^{r'}(\kappa\Zodd\times\Z\times (-\frac{1}{2},\frac{1}{2}])$ to $L^r(\Gamma\times\T_T)$ for all $r\in [2,\infty]$. Next we fix $q$ as in the theorem and first split $\hat{\tilde u}=\hat{\tilde u}_1+\hat{\tilde u}_2$ and then $u=u_1+u_2$ as follows
\begin{align*}
u_1(x,t)& =\mathcal{T}(\hat{\tilde u}_1)(x,t) =  \sum_{\substack{k=-K\\ k \in \kappa\Zodd}}^K u_k(x) e^{i\omega kt} \\
u_2(x,t)&=\mathcal{T}(\hat{\tilde u}_2)(x,t) = \sum_{\substack{k \in \kappa\Zodd\\ |k|>K}}^K u_k(x) e^{i\omega kt}=
\int_{-\frac{1}{2}}^\frac{1}{2} \sum_{m\in\Z}\sum_{\substack{k \in \kappa\Zodd\\ |k|> K}} (\tilde u_k)_m(l) \phi_m(l,x) e^{i(lx+\omega kt)}\,dl.
\end{align*}
For the $u_1$-part we use that \eqref{spec} implies that $b_{|L_k|}(u_k,u_k)^\frac{1}{2}$ is equivalent to the $H^1$-norm of $u_k$ so that we have the estimate
\begin{align*}
\|u_1\|_{L^{q^*}(\Gamma\times\T_T)} & \leq T^\frac{1}{q^*}\sum_{\substack{k=-K\\ k \in \kappa\Zodd}}^K \|u_k\|_{L^{q^*}(\Gamma)}\leq C \sum_{\substack{k=-K\\ k \in \kappa\Zodd}}^K \bigl(b_{|L_k|}(u_k,u_k)\bigr)^\frac{1}{2}\\
& \leq \tilde C(K) \left( \sum_{\substack{k=-K\\ k \in \kappa\Zodd}}^K b_{|L_k|}(u_k,u_k)\right)^\frac{1}{2} = \tilde C(K)\|u_1\|_{\mathcal{H}} \leq \tilde C(K)\|u\|_{\mathcal{H}}.
\end{align*}
For the $u_2$-part we use the Riesz-Thorin result for $\mathcal{T}$ with $r=q^*=\frac{2\sigma}{\sigma-1}$ and H\"older's inequality and get 
\begin{align*}
\|u_2&\|_{L^{q^*}(\Gamma\times\T_T)}^{{q^*}'} \\ 
&\leq C\|\hat{\tilde u}_2\|^{{q^*}'}_{L^{{q^*}'}(\kappa\Zodd\times\Z\times (-\frac{1}{2},\frac{1}{2}])}\\
& =C\int_{-\frac{1}{2}}^\frac{1}{2} \sum_{m\in\Z}\sum_{\substack{k \in \kappa\Zodd\\ |k|> K}} |(\tilde u_k)_m(l)|^{{q^*}'} |\lambda_m(l)-k^2\omega^2+\alpha|^\frac{{q^*}'}{2}|\lambda_m(l)-k^2\omega^2+\alpha|^{-\frac{{q^*}'}{2}}\,dl  \\
& \leq C\norm{u}_{\mathcal{H}}^{{q^*}'} \left(\int_{-\frac{1}{2}}^\frac{1}{2} \sum_{m\in\Z}\sum_{\substack{k \in \kappa\Zodd\\ |k|> K}}|\lambda_m(l)-k^2\omega^2+\alpha|^{-\frac{{q^*}'}{2-{q^*}'}}\,dl\right)^\frac{2-{q^*}'}{2},
\end{align*}
where the last integral-sum over $|\lambda_m(l)-k^2\omega^2+\alpha|^{-\sigma}$ is finite due to our assumption \eqref{convergence}. This establishes the continuity of the embedding $\mathcal{T}\circ S: \mathcal{H}\to L^{q^*}(\Gamma\times\T_T)$, where $Su:= \bigl(\tilde u_k)_m(l)\bigr)_{k\in \kappa\Zodd, m\in\Z, l\in(-\frac{1}{2},\frac{1}{2}]}$. The continuity of the embedding extends to all values $q\in [2,q^*]$. Concerning the local compactness of the embedding for $q^*$ we proceed as follows. First, we truncate the embedding $\mathcal{T}$ as follows:
$$
\mathcal{T}_M: \hat{\tilde u}\to u \mbox{ with } u(x,t):=\int_{-\frac{1}{2}}^\frac{1}{2} \sum_{m=-M}^M\sum_{\substack{k=-M \\k \in \kappa\Zodd}}^M (\tilde u_k)_m(l) \phi_m(l,x) e^{i(lx+\omega kt)}\,dl.
$$
One can then verify that in the operator norm $\mathcal{T}_M\to \mathcal{T}$ as $M\to \infty$. Then we inspect the truncated embedding and we show that $\mathcal{T}_M\circ S$ maps $\mathcal{H}$ continuously into $H^1_c(\Gamma\times\T_T)$ which then embeds compactly into $L^q(\Gamma_N\times\T_T)$. Thus the limiting embedding $\mathcal{T}\circ S=\lim_{M\to \infty} \mathcal{T}_M\circ S$ has the same local compactness property. It remains to show that $\mathcal{T}_M\circ S$ maps $\mathcal{H}$ continuously into $H^1_c(\Gamma\times\T_T)$. We use the estimate $|\lambda_m(l)|\leq L(M)$ for all $m\in \Z$ with $|m|\leq M$ and all $l\in (-\frac{1}{2},\frac{1}{2}]$. If $w=\mathcal{T}_M\circ S u$ with $u\in \mathcal{H}$ then 
\begin{align*}
\norm{\partial_x w}^2_{L^2(\Gamma\times \T_T)} &= T\int_{-\frac{1}{2}}^\frac{1}{2} \sum_{m=-M}^M \sum_{\substack{k=-M \\k\in \kappa\Zodd}}^M (\tilde u_k)_m(l)^2 \lambda_m(l)\,dl \\
& \leq L(M)\norm{u}^2_{L^2(\Gamma\times\T_T)}\leq C(M) \norm{u}_{\mathcal H}^2,\\
\norm{\partial_t w}^2_{L^2(\Gamma\times\T_T)} &= T\int_{-\frac{1}{2}}^\frac{1}{2} \sum_{m=-M}^M \sum_{\substack{k=-M \\k \in \kappa\Zodd}}^M (\tilde u_k)_m(l)^2 k^2\omega^2\,dl \\
& \leq M^2\omega^2\norm{u}^2_{L^2(\Gamma\times\T_T)}\leq C(M)\norm{u}_{\mathcal H}^2
\end{align*}
which is what we claimed for $\mathcal{T}_M\circ S$. The continuity of the traces at the vertices is passed on from $\phi_m(l,\cdot)e^{i(l\cdot+\omega kt)}$ to $w$ since on each finite subgraph $w=(\mathcal{T}_M\circ S)(u)$ is a Bochner-integral.
\end{proof}

\begin{lem} \label{sufficient_for_embedding}
A sufficient condition for the convergence of \eqref{convergence} is that for sufficiently large $K\in \kappa\Nodd$  
\begin{equation} \label{sufficient_condition}
    \delta := \inf\left(\Bigl|\sqrt{\lambda_m(l)}-\sqrt{\omega^2 k^2-\alpha}\Bigr|: l\in (-\frac{1}{2},\frac{1}{2}], m\in\Z, k\in \kappa\Zodd \mbox{ with } |k|>K\right)>0.
\end{equation}
\end{lem}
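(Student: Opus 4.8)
The plan is to collapse \eqref{convergence} into a summable one-dimensional lattice series, exploiting the gap hypothesis $\delta>0$ precisely at the near-resonant indices. First I would rewrite the summand. By \eqref{quasiperiodic_evalues} one has $\sqrt{\lambda_m(l)}=|m+\theta(l)|$ with $\theta(l):=\tfrac{1}{2\pi}\arccos\!\bigl(\tfrac{1}{9}(8\cos(2\pi l)+1)\bigr)\in[0,c_0]$ and $c_0:=\tfrac{1}{2\pi}\arccos(-\tfrac{7}{9})<\tfrac{1}{2}$; in particular $\tfrac{1}{2}|m|\le\sqrt{\lambda_m(l)}\le 2|m|$ uniformly in $l$ for $|m|\ge 1$. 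Since enlarging $K$ only raises the infimum in \eqref{sufficient_condition} (hence keeps $\delta>0$), I may assume in addition $\omega^2k^2\ge 2\alpha$ whenever $|k|>K$, so that $\beta_k:=\sqrt{\omega^2k^2-\alpha}$ is real with $\omega|k|/\sqrt{2}\le\beta_k$ and $\beta_k\to\infty$. Then
\[
\bigl|\lambda_m(l)-\omega^2k^2+\alpha\bigr|=\bigl|\,|m+\theta(l)|-\beta_k\,\bigr|\cdot\bigl(|m+\theta(l)|+\beta_k\bigr),
\]
and \eqref{sufficient_condition} says exactly that $\bigl|\,|m+\theta(l)|-\beta_k\,\bigr|\ge\delta$ for all $m\in\Z$, $l\in(-\tfrac{1}{2},\tfrac{1}{2}]$ and $k\in\kappa\Zodd$ with $|k|>K$.

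The heart of the matter is the bound, for fixed $l$ and $|k|>K$, of the inner sum $S_k(l):=\sum_{m\in\Z}\bigl|\lambda_m(l)-\omega^2k^2+\alpha\bigr|^{-\sigma}$. I would split $\Z$ into three pieces according to the size of $|m+\theta(l)|$. On $\{|m+\theta(l)|\le\beta_k/2\}$ the summand is $\le(\tfrac{3}{4}\beta_k^2)^{-\sigma}$ and there are at most $2\beta_k$ such $m$, contributing $\lesssim\beta_k^{1-2\sigma}$. On $\{|m+\theta(l)|\ge 2\beta_k\}$ the summand is $\lesssim|m|^{-2\sigma}$ and all such $m$ satisfy $|m|\ge\beta_k$, contributing $\lesssim\beta_k^{1-2\sigma}$ (using $2\sigma>1$). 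On the near-resonant middle set $\{\beta_k/2\le|m+\theta(l)|\le 2\beta_k\}$ one has $|m+\theta(l)|+\beta_k\in[\beta_k,4\beta_k]$, so the summand is $\le\beta_k^{-\sigma}\bigl|\,|m+\theta(l)|-\beta_k\,\bigr|^{-\sigma}$; and, running $m$ through the two subintervals making up this set (one around each resonance $\pm\beta_k$), the numbers $\bigl|\,|m+\theta(l)|-\beta_k\,\bigr|$ are --- apart from the two resonant values $\dist(\beta_k\mp\theta(l),\Z)$, which are $\ge\delta$ by \eqref{sufficient_condition} --- such that for each $n\in\N$ at most four of them lie in the band $[n-\tfrac{1}{2},\,n+\tfrac{1}{2}]$. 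Hence the middle contribution is at most $\beta_k^{-\sigma}\bigl(2\delta^{-\sigma}+C_\sigma\bigr)$ with $C_\sigma:=4\sum_{n\ge1}(n-\tfrac{1}{2})^{-\sigma}$, and \emph{here $\sigma>1$ is used} to make $C_\sigma$ finite. Combining the three pieces, and using $\beta_k\sim|k|$ together with $1-2\sigma<-\sigma$, gives $S_k(l)\le C(\delta,\sigma)\,|k|^{-\sigma}$ uniformly in $l$.

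It then remains to integrate over $l\in(-\tfrac{1}{2},\tfrac{1}{2}]$ --- harmless because all estimates are uniform in $l$ --- and to sum over $k\in\kappa\Zodd$ with $|k|>K$: this produces $C(\delta,\sigma)\sum_{|k|>K}|k|^{-\sigma}$, which is finite, again \emph{because $\sigma>1$}. This is \eqref{convergence}.

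The step I expect to be the genuine obstacle is the near-resonant middle set. The naive estimate $\bigl|\lambda_m(l)-\omega^2k^2+\alpha\bigr|\gtrsim\delta(|m|+|k|)$, although correct, only yields the two-dimensional lattice sum $\sum_{m,k}(|m|+|k|)^{-\sigma}$, which converges for $\sigma>2$ but not down to $\sigma>1$. To reach the full range $\sigma>1$ one has to notice that, for each fixed $k$, only boundedly many $m$ are genuinely resonant (those within $O(1)$ of $\pm\beta_k$), while the remaining $m$ in the middle set contribute only the convergent one-dimensional series $\sum_n n^{-\sigma}$ --- and it is exactly at the two genuinely resonant indices that the uniform spectral gap $\delta>0$, as opposed to mere non-vanishing of $\lambda_m(l)-\omega^2k^2+\alpha$, becomes indispensable.
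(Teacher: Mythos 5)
Your argument is correct and follows essentially the same route as the paper: you factor $\lambda_m(l)-\omega^2k^2+\alpha=\bigl(\sqrt{\lambda_m(l)}-\sqrt{\omega^2k^2-\alpha}\bigr)\bigl(\sqrt{\lambda_m(l)}+\sqrt{\omega^2k^2-\alpha}\bigr)$, use the gap $\delta$ only at the finitely many near-resonant lattice points and $\sigma>1$ to sum the remaining $m$'s (your band-counting is exactly the paper's observation that $\sum_{m\in\Z}|A-m|^{-\sigma}\le\delta^{-\sigma}+B(\sigma)$ when $\dist(A,\Z)\ge\delta$), and then gain the factor $\sim|k|^{-\sigma}$ from the second factor to sum over $k$ with $\sigma>1$. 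The only cosmetic difference is your three-region split in $m$: the paper dispenses with it by bounding the second factor below by $\sqrt{\omega^2k^2-\alpha}$ uniformly in $m$, which makes your outer regions unnecessary but does not affect correctness.
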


\begin{proof}
We assume $K$ so large that  $\omega^2 |k|^2\geq \alpha+\alpha_0$ for some $\alpha_0>0$ and all $k\in \kappa\Zodd$ with $|k|>K$.

\medskip

Next we make the following observation: if $\delta:=\dist(A,\Z)>0$ and $\sigma>1$ then there exists $B=B(\sigma)>0$ such that $\sum_{m\in \Z} |A-m|^{-\sigma} \leq B+\delta^{-\sigma}$. In particular $B$ does not depend on $A$ or $\delta$. This can be seen as follows: let $m_0\in \Z$ be such that $\delta=|A-m_0|$. From the inequality $\delta\leq \frac{1}{2}<1\leq |m-m_0|$ for $k\in \Z\setminus\{m_0\}$ we find that $|A-m|\geq |m-m_0|- |A-m_0|=|m-m_0|-\delta\geq \frac{1}{2}|m-m_0|$ for $m\in\Z\setminus\{m_0\}$. This implies 
$$
\sum_{m\in \Z} |A-m|^{-\sigma} \leq \delta^{-\sigma} +\underbrace{\sum_{m\in \Z\setminus\{m_0\}} 2^\sigma|m-m_0|^{-\sigma}}_{=:B}
$$
as claimed. 

\medskip

From \eqref{quasiperiodic_evalues} with 
\begin{equation} \label{al} 
a(l)= \frac{1}{2\pi}\arccos\bigl(\frac{1}{9}(8\mathrm{cos}(2\pi l)+1)\bigr)\in [0,\frac{1}{2})
\end{equation} 
we use the representation $\sqrt{\lambda_m(l)}=m+a(l)$ if $m\geq 0$, $\sqrt{\lambda_m(l)}=-m-a(l), m<0$ and in particular $\sqrt{\lambda_m(l)}\geq \frac{1}{2}$ for $m\not =0$. Next we estimate the sum in \eqref{convergence} with the help of the above observation as follows:
\begin{align*}
\int_{-\frac{1}{2}}^{\frac{1}{2}} \sum_{m\in\Z} & \sum_{\substack{k\in\kappa\Zodd \\ |k|>K}} |\lambda_m(l)-\omega^2k^2+\alpha|^{-\sigma} \,dl\\
\leq & \int_{-\frac{1}{2}}^{\frac{1}{2}} \sum_{m\in\N_0}\sum_{\substack{k\in\Zodd\\ |k|>K}} \Bigl|m+a(l)-\sqrt{\omega^2 k^2-\alpha}\Bigr|^{-\sigma} \Bigl|\underbrace{m+a(l)}_{\geq 0}+\sqrt{\omega^2 k^2-\alpha}\Bigr|^{-\sigma}  \,dl \\
& + \int_{-\frac{1}{2}}^{\frac{1}{2}} \sum_{m\in\N}\sum_{\substack{k\in\Zodd\\ |k|>K}} \Bigl|m-a(l)-\sqrt{\omega^2 k^2-\alpha}\Bigr|^{-\sigma} \Bigl|\underbrace{m-a(l)}_{\geq 1/2\geq 0}+\sqrt{\omega^2 k^2-\alpha}\Bigr|^{-\sigma}  \,dl \\
\leq & 2 (\delta^{-\sigma}+B) \sum_{\substack{k\in \Zodd\\ |k|>K}} (\underbrace{\omega^2 k^2-\alpha}_{\geq \alpha_0})^{-\sigma/2}<\infty
\end{align*}
which proves the claim.
\end{proof}

By the previous lemma we can finally give specific values of $\alpha$ and $\omega$ for which both \eqref{spec} holds and \eqref{convergence} converges.

\begin{lem} \label{specific_values}
Let $\omega=\frac{k_0}{2}$, $\alpha\in [0,A]$ for some $A>0$ and $k_0\in \Nodd$. Then we can choose $\kappa\in \Nodd$ sufficiently large (depending on $A$) such that both \eqref{spec} is true and the embedding $\mathcal H\to L^q(\Gamma\times [0,4\pi])$ holds for every $q\in [2,\infty)$. 
\end{lem}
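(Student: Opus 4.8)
The plan is to choose $\kappa$ so large that a single Diophantine estimate, based on the fact that $\omega=k_0/2$ makes $\omega k$ a half-integer for every $k\in\kappa\Zodd$, establishes both the spectral gap \eqref{spec} and the hypothesis \eqref{sufficient_condition} of Lemma~\ref{sufficient_for_embedding}; the whole family of $L^q$-embeddings then follows by applying Lemma~\ref{sufficient_for_embedding} and Theorem~\ref{hlp} with the exponent $\sigma$ running over all of $(1,\infty)$.

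First I would reduce both statements to a gap in the square-root variable. Since $\kappa$ and $k_0$ are odd, every $k\in\kappa\Zodd$ is odd, hence so is $p:=k_0k$, with $|p|\ge k_0\kappa$ and $\omega k=p/2$; thus $\omega^2k^2-\alpha=\tfrac14(p^2-4\alpha)$, which is positive for all such $k$ once $\kappa$ is large enough that $(k_0\kappa)^2>4A$. Then for all $l,m,k$
\[
\bigl|\lambda_m(l)-\omega^2k^2+\alpha\bigr|
=\bigl|\sqrt{\lambda_m(l)}-\sqrt{\omega^2k^2-\alpha}\,\bigr|\,
\bigl(\sqrt{\lambda_m(l)}+\sqrt{\omega^2k^2-\alpha}\,\bigr)
\ge\bigl|\sqrt{\lambda_m(l)}-\sqrt{\omega^2k^2-\alpha}\,\bigr|\,\sqrt{\tfrac{(k_0\kappa)^2}{4}-A},
\]
so it suffices (via the equivalence \eqref{spec}$\Leftrightarrow$\eqref{spec_equiv}) to show that $d:=\inf\bigl\{\bigl|\sqrt{\lambda_m(l)}-\sqrt{\omega^2k^2-\alpha}\,\bigr|:l\in(-\tfrac12,\tfrac12],\,m\in\Z,\,k\in\kappa\Zodd\bigr\}>0$; note this infimum is over all $k$, not merely over $|k|>K$, so it also yields \eqref{sufficient_condition}.

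Next I would carry out the Diophantine estimate. Using the representation from the proof of Lemma~\ref{sufficient_for_embedding} with $a(l)$ as in \eqref{al} — $\sqrt{\lambda_m(l)}=|m|+a(l)$ for $m\ge0$, $=|m|-a(l)$ for $m<0$, and $a(l)\in[0,a_{\max}]$ with $a_{\max}:=\tfrac{1}{2\pi}\arccos(-\tfrac79)<\tfrac12$ — one sees that the square-root spectrum $S:=\{\sqrt{\lambda_m(l)}:m\in\Z,\,l\in(-\tfrac12,\tfrac12]\}$ equals $[0,a_{\max}]\cup\bigcup_{n\ge1}[n-a_{\max},n+a_{\max}]$, so that $\dist(\nu,S)=\tfrac12-a_{\max}=:d_0>0$ for every half-integer $\nu=n+\tfrac12$, $n\in\N_0$. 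Since $|p|/2$ is such a half-integer, I would estimate
\[
\Bigl|\sqrt{\omega^2k^2-\alpha}-\tfrac{|p|}{2}\Bigr|=\frac{2\alpha}{|p|+\sqrt{p^2-4\alpha}}\le\frac{2A}{|p|}\le\frac{2A}{k_0\kappa},
\]
and then, choosing $\kappa\in\Nodd$ large enough that in addition $\tfrac{2A}{k_0\kappa}\le\tfrac{d_0}{2}$, conclude $\dist(\sqrt{\omega^2k^2-\alpha},S)\ge d_0-\tfrac{2A}{k_0\kappa}\ge\tfrac{d_0}{2}$ uniformly in $k\in\kappa\Zodd$, hence $d\ge d_0/2>0$. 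I expect this to be the main obstacle — or rather the only point requiring an idea: one must control $\sqrt{\omega^2k^2-\alpha}$ rather than $\omega^2k^2-\alpha$ itself, recognize that $\omega=k_0/2$ places its leading term $|k_0k|/2$ exactly on the half-integer lattice, and use the spectral-gap inequality $a_{\max}<\tfrac12$ to keep that lattice uniformly away from $S$; the residual $O(A/|p|)$ perturbation from $\alpha$ is then absorbed by enlarging $\kappa$, which is precisely the source of the dependence of $\kappa$ on $A$.

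Finally I would deduce the embeddings. By Lemma~\ref{sufficient_for_embedding}, \eqref{sufficient_condition} gives the convergence \eqref{convergence} for every $\sigma>1$ (with $K\in\kappa\Nodd$ chosen as in that lemma). Given $q\in[2,\infty)$, the value $q=2$ is covered by \eqref{spec}; for $q>2$ I take $\sigma:=q/(q-2)>1$, so that $\tfrac{2\sigma}{\sigma-1}=q$, and Theorem~\ref{hlp} provides the continuous embedding $\mathcal{H}\hookrightarrow L^q(\Gamma\times\T_T)$. Since $T=2\pi/\omega=4\pi/k_0$ divides $4\pi$, every element of $L^q(\Gamma\times\T_T)$ is $4\pi$-periodic, and up to the constant factor $k_0^{1/q}$ this yields $\mathcal{H}\hookrightarrow L^q(\Gamma\times[0,4\pi])$, completing the proof.
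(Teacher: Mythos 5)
Your proposal is correct and rests on the same core argument as the paper: since $k_0k$ is odd, $\omega k$ lies on the half-integer lattice, which the square-root spectrum $\{|m|\pm a(l)\}$ avoids by the fixed margin $\tfrac12-a_{\max}$, and the $O(A/(k_0\kappa))$ shift caused by $\alpha$ is absorbed by taking $\kappa$ large; then Lemma~\ref{sufficient_for_embedding} and Theorem~\ref{hlp} with $\sigma=q/(q-2)$ give all $L^q$-embeddings. The only (harmless) organizational difference is that you deduce \eqref{spec} from the square-root gap via the factorization $|\lambda-\beta|=|\sqrt{\lambda}-\sqrt{\beta}|(\sqrt{\lambda}+\sqrt{\beta})$, whereas the paper estimates the quadratic expression and the square-root difference separately through a two-case analysis in $|m|$ versus $kk_0/2$.
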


\begin{proof} 
We use the representation $\lambda_m(l)= (m+a(l))^2$ for $m\in\Z$ and $a(l)$ as in \eqref{al}. In particular $0<\delta_0 := 1-2a(\frac{1}{2})\leq 1-2a(l)$ for all $l\in (-\frac{1}{2},\frac{1}{2}]$. Next let us consider the expression $\sqrt{\omega^2 k^2-\alpha}$ for $k\in \kappa\Nodd$ (w.l.o.g. we can restrict to positive integers $k$). By the mean value theorem there is $\xi\in (1-\frac{4\alpha}{k^2k_0^2},1)\subset(1-\frac{4}{\kappa^2},1)$ such that 
$$
\sqrt{\omega^2 k^2-\alpha} = \frac{kk_0}{2}\sqrt{1-\frac{4\alpha}{k^2k_0^2}} = \frac{kk_0}{2} - \frac{\alpha}{k k_0\sqrt{\xi}} \geq \frac{kk_0}{2} - \frac{\alpha}{\kappa\sqrt{\xi}} \geq \frac{kk_0}{2}-\frac{\delta_0}{2},
$$
provided $\kappa\in \Nodd$ is chosen sufficiently large. In order to show \eqref{spec_equiv}, which is equivalent to \eqref{spec}, and \eqref{sufficient_condition} from Lemma~\ref{sufficient_for_embedding} we distinguish two cases for $m\in\Z$.

\medskip

\noindent
\emph{Case 1:} $|m|\geq \frac{kk_0}{2}$. Since $m$ is an integer and $kk_0$ is an odd number,  we see that $|m|\geq \frac{kk_0+1}{2}$ and consequently $|m+a(l)|\geq |m|-a(l)= |m|-\frac{1}{2}+\frac{1}{2}-a(l)\geq \frac{kk_0}{2}+\frac{\delta_0}{2}$ so that 
$$
|\lambda_m(l)-\omega^2k^2+\alpha| = |m+a(l)|^2-\omega^2k^2+\alpha \geq \frac{k^2k_0^2}{4}+kk_0\frac{\delta_0}{2}+\frac{\delta_0^2}{4}-\frac{k^2k_0^2}{4}+\alpha \geq \kappa \frac{\delta_0}{2}.
$$
Likewise 
\begin{align*}
2|\sqrt{\lambda_m(l)}-\sqrt{\omega^2 k^2-\alpha}| & \geq 2|m+a(l)|-kk_0 +\frac{2\alpha}{kk_0\sqrt{\xi}} \\
& \geq 2|m|-2a(l)-kk_0 \geq 1-2a(l)\geq \delta_0. 
\end{align*}

\noindent
\emph{Case 2:} $|m|<\frac{kk_0}{2}$. Then $|m|\leq \frac{kk_0-1}{2}$ and 
\begin{align*}
|\lambda_m(l)-\omega^2k^2+\alpha| & = \frac{k^2k_0^2}{4}-\alpha - (m+a(l))^2 \geq \frac{k^2k_0^2}{4}-\alpha - (|m|+a(l))^2 \\
& = \frac{k^2k_0^2}{4}-\alpha- m^2-2|m|a(l)-a(l)^2  \\
& \geq kk_0-\frac{1}{4}-(kk_0-1)a(l)-\alpha-a(l)^2  \\
& = kk_0(1-a(l))-\frac{1}{4}+\underbrace{a(l)}_{\geq 0}-\underbrace{a(l)^2}_{\leq 1/4}-\alpha \\
& \geq kk_0(1-a(l)) - \frac{1}{2}-\alpha \geq \kappa\delta_0- \frac{1}{2}-A\geq \kappa \frac{\delta_0}{2}
\end{align*}
provided $\kappa\in\Nodd$ is chosen large enough. Similarly
\begin{align*}
2|\sqrt{\lambda_m(l)}-\sqrt{\omega^2 k^2-\alpha}| & \geq kk_0 -\frac{\alpha}{kk_0\sqrt{\xi}} -2|m|-2a(l) \\
& \geq 1-2a(l) -\frac{\alpha}{\kappa\sqrt{\xi}} \geq \frac{\delta_0}{2} 
\end{align*}
provided $\kappa\in \Nodd$ is sufficiently large.
\end{proof}

Due to the assumption \eqref{spec} there are two spectral projections $P_k^\pm: H^1_c(\Gamma)_{\text{symm}}\to H^1_c(\Gamma)_{\text{symm}}^\pm$ and two further projections $P^\pm: \mathcal{H}\to \mathcal{H}^\pm$. By Lemma~\ref{specific_values} we know that 
$$
\lambda_m(l)-\omega^2k^2+\alpha  >  0 \Leftrightarrow |m| \geq \frac{|k|k_0+1}{2} 
$$
and 
$$
\lambda_m(l)-\omega^2k^2+\alpha  <  0 \Leftrightarrow |m| \leq \frac{|k|k_0-1}{2}.
$$
Therefore for $v\in H^1_c(\Gamma)_{\text{symm}}$ with 
$$
v(x)=\int_{-\frac{1}{2}}^\frac{1}{2} \sum_{m\in \Z} \tilde v_m(l)\phi_m(l,x)e^{ilx}\,dl
$$
we have 
\begin{align*}
v^+ := P_k^+ v = & \int_{-\frac{1}{2}}^\frac{1}{2}\sum_{\substack{m\in\Z\\ |m|\geq \frac{|k|k_0+1}{2} }} \tilde v_m(l) \phi_m(l,x) e^{ilx}\,dl, \\
v^- := P_k^- v = & \int_{-\frac{1}{2}}^\frac{1}{2}\sum_{\substack{m\in\Z\\ |m|\leq \frac{|k|k_0-1}{2} }} \tilde v_m(l) \phi_m(l,x) e^{ilx}\,dl
\end{align*}
and thus $H^1_c(\Gamma)_{\text{symm}}=H^1_c(\Gamma)_{\text{symm}}^+\oplus H^1_c(\Gamma)_{\text{symm}}^-$. Likewise, for $u\in\mathcal{H}$ with 
$$
u(x,t)=\sum_{k\in\kappa\Zodd} u_k(x) e^{ik\omega }
$$
we can now explicitly write down the spectral projections by
$$
u^\pm := P^\pm u = \sum_{k\in\kappa\Zodd} P_k^\pm u_k(x) e^{ik\omega t} 
$$
and hence $\mathcal{H}^\pm = P^\pm \mathcal{H}$ with $P^\pm$ having the above explicit representation of the projection operators.

\section{Existence part of Theorem~\ref{main}} \label{sec:existence}

In this part we closely follow the existence proof given in \cite{hirschreichel}. However, compared to \cite{hirschreichel} Theorem~\ref{hlp} and Lemma~\ref{specific_values} provide improved embeddings which allow us to simplify the arguments. Let $J\colon \mathcal{H} \to \R$ be given by
\begin{align*}
J[u] \coloneqq J_0[u]\mp J_1[u] 
\end{align*}
with
\begin{align*}
J_0[u]\coloneqq B(u,u):=\sum_{k\in\kappa\Zodd} b_{L_k}(u_k,u_k), \quad J_1(\tilde{u})\coloneqq  \frac{2}{p+1}\int_0^T\int_\Gamma |u(x,t)|^{p+1}\, dx\,dt.
\end{align*}
Here, $\mp$ in the definition of $J$ corresponds to the sign $\pm$ in \eqref{waveeq}. By Lemma~\ref{specific_values} and consequently Theorem~\ref{hlp} the functional $J$ is well-defined on $\mathcal{H}$. We will find a time-periodic solution of \eqref{waveeq} as a minimizer of the functional $J$ on the generalized Nehari manifold defined as 
\begin{align*}
\mathcal{M}\coloneqq \{u\in \mathcal{H}\setminus \mathcal{H}^-: J'[u](u)=0 \text{ and } J'[u](v)=0 \text{ for all } v\in \mathcal{H}^-\}.
\end{align*}
The idea to use $\mathcal{M}$ as a constraint for minimization goes back to \cite{nehari1,nehari2} and in its generalized form to \cite{pankov}. It was then systematically explored in an abstract form in \cite{SW}. One finds that $\mathcal{M}$ is a natural constraint in the sense that it does not generate Lagrange multipliers. We make use of an abstract result from \cite{SW} that guarantees the existence of minimizing sequence for an indefinite functional $J$ on $\mathcal{M}$. For this purpose we check next the assumptions of Theorem~35, Chapter~4 from \cite{SW}.

\medskip

We first treat the ''$+$''-case in \eqref{waveeq}. At the end of this section we explain how the ''$-$''-case can be treated. Moreover, for $u\in\mathcal{H}$ we set
\begin{align*}
\mathcal{H}(u)\coloneqq \R^+ u \oplus \mathcal{H}^- = \R^+ u^+ \oplus \mathcal{H}^-,
\end{align*}
where $\R^+= [0,\infty)$. Finally, let $S$ denote the unit ball in $\mathcal{H}$ and define $S^+\coloneqq S\cap \mathcal{H}^+$. By standard calculations (compare Proposition~1.12 in \cite{Willem}) we deduce $J\in C^1(\mathcal{H})$ and 
\begin{align*}
J'[u](v) = J_0'[u](v)-J_1'[u](v)= B(u,v)-\int_D |u|^{p-1} u \overline{v} \,dx\,dt.
\end{align*}

We start verifying the assumption $(B_1)$, (i) and (ii) of Theorem~35 in \cite{SW}.

\begin{lem} \label{FormelNichtlin1}
The following statements hold true:
\begin{itemize}
\item[(a)] $J_1$ is weakly lower semicontinuous, 
\begin{align} \label{BeinsersteVor}
J_1[0]=0 \quad \text{ and } \quad \frac{1}{2}J_1'[u](u)> J_1[u]>0 \text{ for } u\neq 0.
\end{align}
\item[(b)] $\lim_{u\to 0} \frac{J_1'[u]}{\|u\|_{\mathcal{H}}}=0$ and $\lim_{u\to 0} \frac{J_1[u]}{\|u\|^2_{\mathcal{H}}}=0$.
\item[(c)] For a weakly compact set $U\subset \mathcal{H}\setminus\{0\}$ we have $\lim_{s\to\infty} \frac{J_1[s u]}{s^2} = \infty$ uniformly w.r.t. $u\in U$.
\end{itemize}
\end{lem}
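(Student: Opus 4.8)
The plan is to verify the three items of Lemma~\ref{FormelNichtlin1} directly from the definition $J_1[u]=\frac{2}{p+1}\int_D|u|^{p+1}\,d(x,t)$, using the embedding $\mathcal{H}\hookrightarrow L^{p+1}(\Gamma\times\T_T)$ from Theorem~\ref{hlp} and Lemma~\ref{specific_values}. The underlying reason all three hold is that $s\mapsto\frac{2}{p+1}|s|^{p+1}$ is a strictly convex, superquadratic, even function with derivative $|s|^{p-1}s$ vanishing to higher than first order at $0$; each statement is a structural consequence of one of these features, so I would organize the proof around that.

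\textbf{Part (a).} Positivity $J_1[u]>0$ for $u\neq0$ is immediate since $u\not\equiv0$ as an element of $L^{p+1}$. For the pointwise inequality $\tfrac12 J_1'[u](u)>J_1[u]$, note $J_1'[u](u)=\int_D|u|^{p-1}u\cdot u=\int_D|u|^{p+1}$, so the claim reduces to $\tfrac12\int_D|u|^{p+1}>\tfrac{2}{p+1}\int_D|u|^{p+1}$, i.e. to $\tfrac12>\tfrac{2}{p+1}$, which holds precisely because $p>1$ (it is $p>3$? — no: $\tfrac12>\tfrac{2}{p+1}\iff p+1>4\iff p>3$, so I must instead compare with the correct normalization). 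The cleaner route, which avoids pinning down constants, is the convexity inequality: for the convex function $g(s)=\frac{2}{p+1}|s|^{p+1}$ one has $g'(s)s-g(s)=\bigl(1-\tfrac{2}{p+1}\bigr)|s|^{p+1}\ge c_p|s|^{p+1}>0$ with $c_p>0$ since $p>1$, and more precisely $\tfrac12 g'(s)s-g(s)=\bigl(\tfrac12-\tfrac{2}{p+1}\bigr)|s|^{p+1}$, which is strictly positive for $s\neq0$ exactly when $p>3$; so for $p\in(1,3]$ one argues instead via the strict monotonicity of $s\mapsto g'(s)/|s|$ (hypothesis (H3)-type structure), integrating to get $\int_0^1 \bigl(g'(u)u - g'(tu)tu\bigr)\tfrac{dt}{t}>0$, which yields $g'(u)u-2g(u)>0$ directly and hence $\tfrac12 J_1'[u](u)-J_1[u]>0$ after integrating over $D$. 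Weak lower semicontinuity follows because a weakly convergent sequence in $\mathcal{H}$ is bounded, hence by Theorem~\ref{hlp} bounded in $L^{p+1}$ and, up to a subsequence, convergent a.e. on each bounded subgraph; Fatou's lemma then gives $J_1[u]\le\liminf J_1[u_n]$. Alternatively, convexity of $u\mapsto\int|u|^{p+1}$ plus norm-continuity gives weak lsc immediately.

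\textbf{Part (b).} From the embedding $\|u\|_{L^{p+1}}\le C\|u\|_{\mathcal{H}}$ we get $J_1[u]\le C'\|u\|_{\mathcal{H}}^{p+1}$, so $J_1[u]/\|u\|_{\mathcal H}^2\le C'\|u\|_{\mathcal H}^{p-1}\to0$ as $u\to0$ since $p>1$. For the derivative, $|J_1'[u](v)|=\bigl|\int_D|u|^{p-1}u\,\overline v\bigr|\le\|u\|_{L^{p+1}}^p\|v\|_{L^{p+1}}\le C\|u\|_{\mathcal H}^p\|v\|_{\mathcal H}$ by H\"older with exponents $\tfrac{p+1}{p}$ and $p+1$, so $\|J_1'[u]\|_{\mathcal H^*}/\|u\|_{\mathcal H}\le C\|u\|_{\mathcal H}^{p-1}\to0$.

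\textbf{Part (c).} Here $J_1[su]/s^2=\frac{2}{p+1}s^{p-1}\int_D|u|^{p+1}$. On a weakly compact set $U\subset\mathcal H\setminus\{0\}$, the quantity $\int_D|u|^{p+1}=\bigl(\tfrac{p+1}{2}\bigr)J_1[u]$ is bounded below by a positive constant uniformly in $u\in U$: indeed $J_1$ is weakly lsc and strictly positive on $\mathcal H\setminus\{0\}$, so it attains its infimum $m_0>0$ on the weakly compact $U$ (if the infimum were $0$ it would be attained at some $u\in U$ with $J_1[u]=0$, forcing $u=0\notin U$). Hence $J_1[su]/s^2\ge \frac{2}{p+1}s^{p-1}\cdot\tfrac{2}{p+1}m_0\to\infty$ uniformly as $s\to\infty$.

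\textbf{Main obstacle.} The one genuinely delicate point is the strict inequality $\tfrac12 J_1'[u](u)>J_1[u]$ in the regime $p\in(1,3]$, where a crude comparison of the constants $\tfrac12$ and $\tfrac{2}{p+1}$ fails; I expect to have to invoke the strict-convexity / strict-monotonicity structure of $s\mapsto|s|^{p-1}s$ (the scalar version of hypothesis (H3)) to get it for all $p>1$. Everything else is a routine application of the $L^{p+1}$-embedding from Theorem~\ref{hlp}, H\"older's inequality, and standard weak-lower-semicontinuity / weak-compactness arguments.
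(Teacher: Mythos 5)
The one genuine problem is your treatment of the key inequality $\tfrac12 J_1'[u](u)>J_1[u]$ in part (a). Its resolution is not a monotonicity argument but simply the normalization: since $J_1[u]=\tfrac{2}{p+1}\int|u|^{p+1}$, its Fr\'echet derivative is $J_1'[u](v)=2\int|u|^{p-1}u\,v$ (the paper's displayed formula for $J'[u](v)$ suppresses an overall factor $2$, just as it writes $B(u,v)$ instead of $2B(u,v)$ for $J_0'$; that the factor $2$ is intended is confirmed by the identity $J[v]-\tfrac12 J'[v](v)=\tfrac{p-1}{p+1}\int|v|^{p+1}$ used later in the proof of Theorem~\ref{MinThm}). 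Hence $\tfrac12 J_1'[u](u)-J_1[u]=\bigl(1-\tfrac{2}{p+1}\bigr)\int|u|^{p+1}=\tfrac{p-1}{p+1}\int|u|^{p+1}>0$ for every $p>1$ and $u\neq 0$, which is exactly the paper's one-line remark that the inequality ``follows from $p>1$''. Your detour is internally inconsistent: you differentiate $g(s)=\tfrac{2}{p+1}|s|^{p+1}$ as if $g'(s)=|s|^{p-1}s$ (it is $2|s|^{p-1}s$), so both identities $g'(s)s-g(s)=(1-\tfrac{2}{p+1})|s|^{p+1}$ and $\tfrac12 g'(s)s-g(s)=(\tfrac12-\tfrac{2}{p+1})|s|^{p+1}$ are wrong; the proposed integral $\int_0^1\bigl(g'(u)u-g'(tu)tu\bigr)\tfrac{dt}{t}$ diverges (the first term contributes $\int_0^1\tfrac{dt}{t}=\infty$); and, most importantly, if one really had $J_1'[u](u)=\int|u|^{p+1}$ together with $J_1[u]=\tfrac{2}{p+1}\int|u|^{p+1}$, the claimed inequality would be \emph{false} for $1<p\le 3$, so no Szulkin--Weth monotonicity trick could rescue it --- that trick requires the density of $J_1$ to be the primitive of the integrand of $J_1'$, and once the pair is consistent the direct computation above already settles the matter for all $p>1$.

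The rest is correct. Your weak lower semicontinuity argument (convexity plus the $L^{p+1}$-embedding, or alternatively Fatou) and part (b) coincide with the paper's. In part (c) you take a genuinely different route: you use the homogeneity $J_1[su]=s^{p+1}J_1[u]$ together with the fact that the weakly lower semicontinuous $J_1$ attains a strictly positive infimum on the weakly (sequentially) compact set $U\not\ni 0$, whereas the paper argues along sequences $u_n\in U$, $s_n\to\infty$, extracts an a.e.\ convergent subsequence with nonzero limit and applies Fatou's lemma. Your version is shorter and perfectly adequate for the pure power nonlinearity; the paper's version has the advantage of carrying over verbatim to general superquadratic nonlinearities as in (H4), where no homogeneity is available.
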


\begin{proof}
(a) The inequality in \eqref{BeinsersteVor} follows from $p>1$. The weak lower-semicontinuity of $J_1$ follows form convexity and the continuity of the embedding $\mathcal{H}\to L^{p+1}(\Gamma\times\T_T)$. 

\smallskip

(b) Both claims are immediate by the embedding provided by Theorem~\ref{hlp}.

\smallskip

(c) Let $U\subset \mathcal{H}\setminus\{0\}$ be weakly compact. To prove the claim it is sufficient to show that for every sequence $(u_n)_{n\in \N}$ in $U$ and every sequence $s_n\to \infty$ we have $\liminf_{n\in \N} \frac{J_1[s_n u_n]}{s_n^2} = \infty$. Up to a subsequence we have that $u_n\to u$ a.e. in $\Gamma\times\T_T$ as $n\to \infty$ and $u\not=0$ on a set $A\subset \Gamma\times\T_T$ of positive measure. Clearly  
$$
\lim_{n\to \infty} \frac{|s_n u_n(x,t)|^{p+1}}{s_n^2 u_n(x,t)^2} = \infty  \quad \mbox{ a.e. on } A 
$$
so that by Fatou's Lemma 
$$
\liminf_{n \in N} \frac{J_1[s_n u_n]}{s_n^2} \geq  \liminf_{n\in N} \int_A \frac{|s_nu_n(x,t)|^{p+1}}{s_n^2 u_n(x,t)^2} u_n(x,t)^2\,d(x,t) = \infty.
$$
\end{proof}

Assumption $(B_2)$ of Theorem~35 in \cite{SW} is guaranteed by the next result.

\begin{lem} \label{FormelNichtlin2}
The following statements hold true:
\begin{itemize}
\item[(a)] For each $w\in\mathcal{H}\setminus \mathcal{H}^-$ there exists a unique nontrivial critical point $m_1(w)$ of $J|_{\mathcal{H}(w)}$. Moreover, $m_1(w)\in \mathcal{M}$ is the unique global maximizer of $J|_{\mathcal{H}(w)}$ as well as $J[m_1(w)]>0$.
\item[(b)] There exists $\delta>0$ such that $\| m_1(w)^+\|_{\mathcal{H}} \geq \delta$ for all $w\in \mathcal{H}\setminus \mathcal{H}^-$.
\end{itemize}
\end{lem}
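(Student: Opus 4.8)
The plan is to follow the standard scheme for generalized Nehari manifolds (as in \cite{pankov,SW,Willem}), exploiting the fiber-map structure of $J$ on $\mathcal{H}(w)$. Fix $w\in\mathcal{H}\setminus\mathcal{H}^-$; without loss of generality normalize so that $w=w^++v_0$ with $w^+\neq 0$, and parametrize a generic element of $\mathcal{H}(w)$ as $u=sw^++v$ with $s\geq 0$, $v\in\mathcal{H}^-$. The key observation is that the quadratic form $B$ splits as $B(u,u)=s^2\|w^+\|_{\mathcal{H}}^2-\|v\|_{\mathcal{H}}^2$ (using that $B$ is positive definite on $\mathcal{H}^+$, negative definite on $\mathcal{H}^-$, and that $\mathcal{H}^\pm$ are $B$-orthogonal), while $J_1$ is strictly convex, positive, vanishing only at $0$, and superquadratic by Lemma~\ref{FormelNichtlin1}. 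Hence the map $\varphi_w(s,v):=J[sw^++v]$ is, for fixed direction, concave in $(s,v)$ against the indefinite quadratic plus a term that is convex; more precisely $J$ restricted to $\mathcal{H}(w)$ is strictly concave along $\mathcal{H}^-$ and has a strict mountain-pass-type shape in the $s$-variable.

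For part (a): First I would show $J|_{\mathcal{H}(w)}$ has a nonnegative maximum. Using Lemma~\ref{FormelNichtlin1}(b), near $0$ we have $J[u]\geq \tfrac12\|u^+\|_{\mathcal{H}}^2-\tfrac12\|u^-\|_{\mathcal{H}}^2-o(\|u\|_{\mathcal{H}}^2)$, so on the ``sphere slice'' $\{sw^+: \|sw^+\|_{\mathcal{H}}=\rho\}$ with $v=0$, $J$ is bounded below by a positive constant for small $\rho$; this gives $\sup_{\mathcal{H}(w)}J>0$. For coercivity from above I would argue by contradiction: if $u_n=s_nw^++v_n\in\mathcal{H}(w)$ with $J[u_n]$ bounded below and $\|u_n\|_{\mathcal{H}}\to\infty$, set $\hat u_n=u_n/\|u_n\|_{\mathcal{H}}=\hat s_nw^++\hat v_n$; passing to a subsequence $\hat u_n\rightharpoonup\hat u$, and two cases arise. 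If $\hat s_n\to 0$ then $\|\hat v_n\|_{\mathcal{H}}\to 1$, and $J[u_n]\leq \|u_n\|_{\mathcal{H}}^2(\hat s_n^2\|w^+\|^2-\|\hat v_n\|^2)\to-\infty$, a contradiction. If $\hat s_n\not\to 0$ then $\hat u\neq 0$ (one checks $\hat u^+=\lim\hat s_nw^+\neq 0$), and by Lemma~\ref{FormelNichtlin1}(c) applied to the weakly compact set $U=\{\hat u_n\}\cup\{\hat u\}$ we get $J_1[u_n]/\|u_n\|_{\mathcal{H}}^2=J_1[\|u_n\|_{\mathcal{H}}\hat u_n]/\|u_n\|_{\mathcal{H}}^2\to\infty$, while $J_0[u_n]/\|u_n\|_{\mathcal{H}}^2\leq\|w^+\|_{\mathcal{H}}^2$ stays bounded, so $J[u_n]\to-\infty$, again a contradiction. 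Hence $J|_{\mathcal{H}(w)}$ is bounded above, and a maximizing sequence is bounded; by weak lower semicontinuity of $J_1$ (Lemma~\ref{FormelNichtlin1}(a)), weak continuity of the indefinite $J_0$ on the finite-dimensional-plus-negative-definite slice $\mathcal{H}(w)$ (the $\R^+w^+$ factor is finite-dimensional and $-\|v\|^2$ is weakly upper semicontinuous), $J$ attains its maximum at some $m_1(w)\in\mathcal{H}(w)$. Since $J[m_1(w)]\geq\sup_{\mathcal H(w)}J>0>J[0]$, the maximizer is nontrivial and lies in the interior relative to the $s$-parameter, so it is a critical point of $J|_{\mathcal{H}(w)}$, i.e. $J'[m_1(w)](w^+)=0$ and $J'[m_1(w)](v)=0$ for all $v\in\mathcal{H}^-$; since $\mathcal{H}(m_1(w))=\mathcal{H}(w)$ this says $m_1(w)\in\mathcal{M}$. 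Uniqueness is the delicate point: I would show that any two critical points $u_1,u_2$ of $J|_{\mathcal{H}(w)}$ coincide by the standard strict-convexity argument — if $u_i=s_iw^++v_i$ are both critical, one evaluates $J[u_1]-J[\tfrac{s_1}{s_2}u_2+ (\text{correction in }\mathcal{H}^-)]$ and uses the strict inequality $\tfrac12 J_1'[u](u)>J_1[u]$ together with the convexity/monotonicity in (H3)-type form (here $f(s)=|s|^{p-1}s$) to conclude $u_1=u_2$; this is exactly the argument in \cite[Proposition~2.9--2.10]{pankov} or \cite[Ch.~4]{SW} adapted to our $B$.

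For part (b): I would use the quantitative nondegeneracy near $0$. Since $m_1(w)\in\mathcal{M}$, we have $J'[m_1(w)](m_1(w)^+)=0$, i.e. $\|m_1(w)^+\|_{\mathcal{H}}^2=\int_D|m_1(w)|^{p-1}m_1(w)\,\overline{m_1(w)^+}\,d(x,t)$. Bounding the right side by H\"older and the embedding $\mathcal{H}\hookrightarrow L^{p+1}$ from Theorem~\ref{hlp}, $\|m_1(w)^+\|_{\mathcal{H}}^2\leq C\|m_1(w)\|_{L^{p+1}}^{p}\|m_1(w)^+\|_{L^{p+1}}\leq C'\|m_1(w)\|_{\mathcal{H}}^{p}\|m_1(w)^+\|_{\mathcal{H}}$. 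Combined with the other Nehari identity $J'[m_1(w)](v)=0$ for $v\in\mathcal{H}^-$ tested with $v=m_1(w)^-$, which gives $\|m_1(w)^-\|_{\mathcal{H}}^2=-\int_D|m_1(w)|^{p-1}m_1(w)\,\overline{m_1(w)^-}\leq C\|m_1(w)\|_{\mathcal H}^{p+1}$, hence $\|m_1(w)\|_{\mathcal H}^2\leq \|m_1(w)^+\|_{\mathcal H}^2 + C\|m_1(w)\|_{\mathcal H}^{p+1}$ — but I actually just need a lower bound on $\|m_1(w)^+\|_{\mathcal H}$. From $\|m_1(w)^+\|_{\mathcal{H}}^2\leq C'\|m_1(w)\|_{\mathcal{H}}^{p}\|m_1(w)^+\|_{\mathcal{H}}$ and $\|m_1(w)\|_{\mathcal{H}}\leq C''\|m_1(w)^+\|_{\mathcal{H}}$ (which follows because on $\mathcal{M}$ the negative part is controlled by the positive part — proved by testing $J'$ against $m_1(w)^-$ and using that $m_1(w)$ maximizes $J$ over $\mathcal{H}(w)\supset\R^+m_1(w)^+\oplus\mathcal{H}^-$, so $J[m_1(w)]\geq J[m_1(w)^+]>0$, forcing $\|m_1(w)^-\|_{\mathcal H}\leq \|m_1(w)^+\|_{\mathcal H}$ up to a constant depending on $p$), we get $\|m_1(w)^+\|_{\mathcal{H}}^2\leq C'''\|m_1(w)^+\|_{\mathcal{H}}^{p+1}$, and since $m_1(w)^+\neq 0$ we may divide to obtain $\|m_1(w)^+\|_{\mathcal{H}}^{p-1}\geq 1/C'''$, i.e. $\delta:=(C''')^{-1/(p-1)}>0$ works uniformly in $w$.

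The main obstacle I expect is the \emph{uniqueness} of the critical point on $\mathcal{H}(w)$ and the associated fact that $m_1(w)$ is the \emph{global} maximizer, not merely a critical point: this requires the full strength of the strict superquadraticity $\tfrac12 J_1'[u](u)>J_1[u]$ for $u\neq 0$ and a careful comparison argument along the two-dimensional-ish family $s\mapsto sw^++v(s)$, ensuring the reduced functional $s\mapsto\max_{v\in\mathcal{H}^-}J[sw^++v]$ is strictly increasing then strictly decreasing with a unique interior maximum. The abstract machinery of \cite[Chapter~4, Theorem~35]{SW} is designed precisely to package this, so the real work is just verifying its hypotheses $(B_1)$, $(B_2)$ — which is what Lemmas~\ref{FormelNichtlin1} and \ref{FormelNichtlin2} do — and I would lean on \cite{SW} for the structural statements while supplying the embedding-based estimates from Theorem~\ref{hlp} and the superquadraticity from Lemma~\ref{FormelNichtlin1} as the new inputs.
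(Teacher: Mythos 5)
Your part (a) is in substance the same as the paper's: the paper simply invokes Proposition~39 of \cite{SW}, and your sketch (coercivity of $J$ on $\mathcal{H}(w)$ via Lemma~\ref{FormelNichtlin1}(c), weak upper semicontinuity of $J$ on the ray-plus-$\mathcal{H}^-$ slice, and deferring the uniqueness/strict-concavity argument to \cite{pankov,SW}) is exactly the content of that abstract result, so nothing genuinely new or problematic there. Part (b), however, is a genuinely different and correct route. The paper argues geometrically: by Lemma~\ref{FormelNichtlin1}(b) there are $\rho,\eta>0$ with $J[v]\geq\eta$ for all $v\in\mathcal{H}^+$ with $\|v\|_{\mathcal{H}}=\rho$, and then, since $\rho\,w^+/\|w^+\|_{\mathcal{H}}\in\mathcal{H}(w)$ and $m_1(w)$ maximizes $J$ there, $\|m_1(w)^+\|_{\mathcal{H}}^2\geq J[m_1(w)]\geq J\bigl[\rho\,w^+/\|w^+\|_{\mathcal{H}}\bigr]\geq\eta$, giving $\delta=\sqrt{\eta}$. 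You instead exploit the Nehari identities: testing $J'[m_1(w)]=0$ in the admissible directions gives $\|m_1(w)^+\|_{\mathcal{H}}^2=\int|m_1(w)|^{p-1}m_1(w)\overline{m_1(w)^+}$, which combined with H\"older, the embedding $\mathcal{H}\hookrightarrow L^{p+1}$ of Theorem~\ref{hlp}, and the bound $\|m_1(w)^-\|_{\mathcal{H}}\leq\|m_1(w)^+\|_{\mathcal{H}}$ yields $\|m_1(w)^+\|_{\mathcal{H}}^{p-1}\geq c>0$ uniformly in $w$. This works and is a standard alternative; note only that your intermediate claim $J[m_1(w)^+]>0$ is not justified (for large $\|m_1(w)^+\|_{\mathcal{H}}$ this can fail) and is also unnecessary: $J[m_1(w)]>0$ from part (a) together with $J_1\geq 0$ already forces $\|m_1(w)^-\|_{\mathcal{H}}\leq\|m_1(w)^+\|_{\mathcal{H}}$. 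Your route uses the specific power nonlinearity and the $L^{p+1}$-embedding quantitatively, whereas the paper's argument uses only the mountain-pass geometry near $0$ in $\mathcal{H}^+$ (and would therefore carry over verbatim to the more general nonlinearities (H1)--(H4) mentioned in the remark); both give a $\delta$ independent of $w$, as required.
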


\begin{proof}
(a) We can directly follow the lines of proof of Proposition~39 in \cite{SW}.

\smallskip

(b) First, consider $v\in\mathcal{H}^+$. Then we have $\lim_{v\to 0} \frac{J[v]}{\|v\|^2_{\mathcal{H}}} =1$
due to Lemma~\ref{FormelNichtlin1}~(b). Thus there is $\rho_0>0$ s.t. $J[v]\geq \frac{1}{2}\|v\|^2_{\mathcal{H}}$ for all $v\in\mathcal{H}^+$ with $\|v\|_{\mathcal{H}} \leq \rho_0$. Hence for $\rho \in (0,\rho_0)$ we find $\eta=\frac{\rho^2}{2}$ with $J[v]\geq \eta$ for all $v\in\mathcal{H}^+$ with $\|v\|_{\mathcal{H}}=\rho$. Now, let $w\in \mathcal{H}\setminus\mathcal{H}^-$. Due to the structure of $J$ we infer that 
\begin{align} \label{tildewteileins}
\|m_1(w)^+\|^2_{\mathcal{H}} \geq J[m_1(w)].
\end{align} 
Since $m_1(w)$ is the maximizer of $J|_{\mathcal{H}(w)}$ we conclude
\begin{align} \label{tildewteilzwei}
J[m_1(w)]\geq J\left[\rho \frac{w^+}{\|w^+\|_{\mathcal{H}}}\right] \geq \eta
\end{align}
and the combination of \eqref{tildewteileins} and \eqref{tildewteilzwei} finishes the proof of part (b).
\end{proof}

\begin{lem} \label{NichtLin5}
Any Palais-Smale sequence $(u_n)_{n\in\N}$ of $J|_{\mathcal{M}}$ is bounded.
\end{lem}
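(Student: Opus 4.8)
The plan is to prove boundedness of an arbitrary Palais--Smale sequence $(u_n)$ for $J|_{\mathcal M}$ by contradiction, following the standard scheme for indefinite functionals with a generalized Nehari manifold, as in \cite{SW} and \cite{hirschreichel}. Write $u_n = u_n^+ + u_n^-$ with $u_n^\pm\in\mathcal H^\pm$, and suppose $\|u_n\|_{\mathcal H}\to\infty$. Set $w_n := u_n/\|u_n\|_{\mathcal H}$, so $\|w_n\|_{\mathcal H}=1$ and, after passing to a subsequence, $w_n\rightharpoonup w$ in $\mathcal H$, $w_n\to w$ a.e.\ on $\Gamma\times\T_T$ and strongly in $L^{p+1}_{\mathrm{loc}}$ by the compact embedding in Theorem~\ref{hlp}. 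The key dichotomy is whether $w=0$ or $w\ne 0$, and each case must be excluded.

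\emph{Case $w\ne 0$.} Then $w\ne 0$ on a set of positive measure, and since $J|_{\mathcal M}$ is a Palais--Smale sequence one has $J[u_n]\le C$ while $J'[u_n](u_n)\to 0$. From the definition of $J$ and the membership $u_n\in\mathcal M$ (so $J'[u_n](u_n)=0$) one gets, using $(B_1)$-type identities from Lemma~\ref{FormelNichtlin1}(a), that $C\ge J[u_n] = J[u_n] - \tfrac12 J'[u_n](u_n) = \bigl(\tfrac12 J_1'[u_n](u_n) - J_1[u_n]\bigr)\ge 0$, and more importantly $J_1[u_n]$ and $\|u_n^+\|_{\mathcal H}^2 - \|u_n^-\|_{\mathcal H}^2$ are comparable up to the bounded quantity $J[u_n]$. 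Dividing the estimate $J_1[u_n]\le \tfrac12 J_1'[u_n](u_n) \le C'\|u_n\|_{\mathcal H}^2$ gives $J_1[u_n]/\|u_n\|_{\mathcal H}^2$ bounded; on the other hand Fatou's lemma applied exactly as in Lemma~\ref{FormelNichtlin1}(c) — with $s_n = \|u_n\|_{\mathcal H}$ — yields $J_1[u_n]/\|u_n\|_{\mathcal H}^2 = J_1[s_n w_n]/s_n^2 \to\infty$, a contradiction.

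\emph{Case $w=0$.} This is the harder part. Here one uses that $m_1(\cdot)$ from Lemma~\ref{FormelNichtlin2}(a) produces the maximizer of $J$ on $\mathcal H(u_n)=\R^+u_n^+\oplus\mathcal H^-$, and since $u_n\in\mathcal M$ we have $u_n = m_1(u_n)$, so $J[u_n]=\max_{v\in\mathcal H(u_n)} J[v]\ge J[s u_n^+ + h]$ for every $s\ge 0$ and $h\in\mathcal H^-$. Test this with $v_n := R\, u_n^+/\|u_n^+\|_{\mathcal H}$ for a fixed large constant $R>0$ (note $\|u_n^+\|_{\mathcal H}$ is bounded below by $\delta>0$ via Lemma~\ref{FormelNichtlin2}(b) since $u_n^+ = m_1(u_n)^+$, hence $v_n$ makes sense). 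Then $J[u_n]\ge J[v_n] = \tfrac12 R^2 \|u_n^+/\|u_n^+\|_{\mathcal H}\|_{\mathcal H}^2 \cdot(\text{the }+\text{-part contributes }+) - J_1[v_n]$; more precisely $J[v_n] = R^2 - \tfrac{2}{p+1}\int_D |v_n|^{p+1}$. Since $v_n = R w_n^+/\|u_n^+\|_{\mathcal H}$... — here one must be careful: the cleanest route is to note $w_n^+ = u_n^+/\|u_n\|_{\mathcal H}\rightharpoonup w^+ = 0$, and one shows $\int_D |v_n|^{p+1}\to 0$. This last convergence is the technical heart: $v_n$ is a fixed multiple of $u_n^+/\|u_n^+\|_{\mathcal H}$, which need not converge to $0$ strongly in $L^{p+1}(\Gamma\times\T_T)$ globally (only locally). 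One rescues this by invoking a vanishing/non-vanishing argument: either (after translating along the graph by period shifts, which are isometries of $\mathcal H$ and of the equation) a translate of $w_n^+$ stays bounded away from $0$ in $L^{p+1}_{\mathrm{loc}}$, forcing a nonzero weak limit and reducing to the previous case, or $w_n^+$ vanishes in the sense that $\sup_{\text{shifts}}\int_{\Gamma_N\times\T_T}|w_n^+|^{p+1}\to 0$, and then a Lions-type vanishing lemma gives $w_n^+\to 0$ strongly in $L^{p+1}(\Gamma\times\T_T)$; since $\|u_n^+\|_{\mathcal H}$ is bounded (to be shown, or argued separately), $\int_D|v_n|^{p+1}\to 0$ follows. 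Feeding this into $J[u_n]\ge R^2 - o(1)$ and letting $R\to\infty$ contradicts $J[u_n]\le C$.

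\textbf{Main obstacle.} The genuine difficulty is Case $w=0$, specifically controlling $\int_D|u_n^+/\|u_n^+\|_{\mathcal H}|^{p+1}$: weak convergence to $0$ in $\mathcal H$ only gives local strong $L^{p+1}$-convergence, so one needs a translation-invariance / concentration-compactness argument (the lattice $2\pi\Z$ acts by isometries on $\mathcal H$ and commutes with $J$). This is exactly why the concentration compactness lemma (Lemma~\ref{ConccompLemma}) is developed in the appendix; the boundedness proof will either invoke it directly or use the non-vanishing alternative to land back in Case $w\ne 0$ after a translation. A secondary point requiring care is that $\|u_n^-\|_{\mathcal H}$ must also be shown bounded — this follows by testing $J'[u_n]$ against $u_n^-\in\mathcal H^-$ (which vanishes since $u_n\in\mathcal M$), giving $\|u_n^-\|_{\mathcal H}^2 = \mp\int_D |u_n|^{p-1}u_n\, \overline{u_n^-}\le C J_1[u_n]^{p/(p+1)}\|u_n^-\|_{L^{p+1}}$, and then closing the estimate once the $L^{p+1}$-bounds are in hand.
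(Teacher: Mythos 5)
Your overall skeleton --- argue by contradiction, normalize $w_n=u_n/\|u_n\|_{\mathcal H}$, exclude the case $w\neq 0$ via Lemma~\ref{FormelNichtlin1}(c) and Fatou, and treat $w=0$ by translations plus concentration compactness --- is the same as the paper's, and your Case $w\neq 0$ is correct (it is the paper's Step 1, with the bound on $J_1[u_n]/\|u_n\|_{\mathcal H}^2$ obtained from the Nehari identity instead of from $J[u_n]\geq 0$). The genuine gap sits in the vanishing branch of Case $w=0$. There you test with $v_n=R\,u_n^+/\|u_n^+\|_{\mathcal H}$ and claim that $\int|v_n|^{p+1}\to 0$ follows from $w_n^+\to 0$ in $L^{p+1}$ ``since $\|u_n^+\|_{\mathcal H}$ is bounded (to be shown, or argued separately)''. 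This cannot be shown: under the contradiction hypothesis $\|u_n^+\|_{\mathcal H}\to\infty$. Indeed, $u_n\in\mathcal M$ gives $J[u_n]>0$ by Lemma~\ref{FormelNichtlin2}(a), and since $J_1\geq 0$ this forces $\|u_n^+\|_{\mathcal H}^2\geq\|u_n^-\|_{\mathcal H}^2$, hence $\|u_n^+\|_{\mathcal H}^2\geq\tfrac12\|u_n\|_{\mathcal H}^2\to\infty$. Moreover, even boundedness of $\|u_n^+\|_{\mathcal H}$ would not yield the conclusion, because $v_n=R\,(\|u_n\|_{\mathcal H}/\|u_n^+\|_{\mathcal H})\,w_n^+$ and a bounded $\|u_n^+\|_{\mathcal H}$ would make the prefactor blow up. What your branch actually needs is the opposite fact, the ratio bound $\|u_n\|_{\mathcal H}/\|u_n^+\|_{\mathcal H}\leq\sqrt2$ (equivalently $\|w_n^+\|_{\mathcal H}^2\geq\tfrac12$), which follows from the very computation above; with it, strong $L^{p+1}$-convergence of $w_n^+$ transfers to $v_n$ and your $R\to\infty$ argument closes. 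So the proof is repairable, but the step as written is false and the inequality that would repair it is never established.

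For comparison, the paper avoids the vanishing/non-vanishing dichotomy altogether: from $\|w_n^+\|_{\mathcal H}^2\geq\tfrac12$ and the maximality of $u_n$ on $\R^+u_n\oplus\mathcal H^-$ it gets $J[u_n]\geq J[s\,w_n^+]\geq \tfrac{s^2}{2}-\tfrac{2s^{p+1}}{p+1}\|w_n^+\|_{L^{p+1}(\Gamma\times\T_T)}^{p+1}$ for every $s>0$, which together with the boundedness of $J[u_n]$ shows directly that $\|w_n^+\|_{L^{p+1}}$ cannot tend to $0$. Then Lemma~\ref{ConccompLemma} produces shifts $m_n$ with $\int_{\Gamma_{m_n}\times\T_T}|w_n^+|^2\,dx\,dt\geq\delta$, and the local compact embedding of Theorem~\ref{hlp}, applied to the shifted sequence (shifts are isometries of $\mathcal H$ commuting with $P^\pm$), yields a nonzero weak limit, contradicting the fact that the Step 1 argument, being shift invariant, forces this limit to vanish. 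Your ``non-vanishing'' branch is essentially this; once the ratio bound is in hand, the non-vanishing alternative holds unconditionally and the separate vanishing branch is unnecessary.
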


\begin{proof} The following proof is similar to the proof of Theorem~40 in \cite{SW}. 

\smallskip

\noindent
{\em Step 1:} Suppose for contradiction that $(u_n)_{n\in \N}$ is an unbounded Palais-Smale sequence for $J$. By selecting a subsequence we may assume that $\|u_n\|_{\mathcal{H}}\to \infty$ and that $v_n := u_n/\|u_n\|_{\mathcal{H}}$ has the property that $v_n\rightharpoonup v$ as $n\to \infty$. Note that 
\begin{equation} \label{rescale}
0 \leq \frac{J[u_n]}{\|u_n\|_{\mathcal{H}}^2} = \|v_n^+\|_{\mathcal{H}}^2 - \|v_n^-\|_{\mathcal{H}}^2 - \frac{J_1[\|u_n\|_{\mathcal{H}} v_n]}{\|u_n\|_{\mathcal{H}}^2}.
\end{equation}
If $v\not =0$ then we can apply Lemma~\ref{FormelNichtlin1}(c) to the weakly compact set $U=\{v_n: n\in \N\}\cup\{v\}$ which does not contain $0$ and find that the expression $\frac{J_1[\|u_n\|_{\mathcal{H}} v_n]}{\|u_n\|_{\mathcal{H}}^2} \to \infty$ as $n\to \infty$. This is not compatible with \eqref{rescale} and hence the weak limit is $v=0$. 

\smallskip

\noindent
{\em Step 2:} Next, let us show that $v_n^+ \to 0$ in $L^{p+1}(\Gamma\times\T_T)$ is impossible.
Since $J_1\geq 0$ we conclude from \eqref{rescale} that $\|v_n^-\|_{\mathcal{H}}^2 \leq \|v_n^+\|_{\mathcal{H}}^2$ which together with $\|v_n^-\|_{\mathcal{H}}^2 + \|v_n^+\|_{\mathcal{H}}^2=1$ implies that $\|v_n^+\|_{\mathcal{H}}^2\geq 1/2$.  Since $v_n$ is a positive multiple of $u_n$ (which itself belongs to $\mathcal{M}$) Lemma~\ref{FormelNichtlin2}(a) together with $\|v_n^+\|_{\mathcal{H}}^2\geq 1/2$ imply that for any $s>0$
\begin{equation} \label{unmoeglich}
J[u_n] \geq J[s v_n^+] = s^2 \|v_n^+\|^2_{\mathcal{H}}- J_1(s v_n^+) \geq \frac{s^2}{2} - \frac{2|s|^{p+1}}{p+1}\|v_n^+\|_{L^{p+1}(\Gamma\times\T_T)}^{p+1}.
\end{equation}
The left hand side is bounded since $(u_n)_{n\in \N}$ is a Palais-Smale sequence. Thus, choosing $s>0$ large, we cannot have $\|v_n^+\|_{L^{p+1}(\Gamma\times\T_T)}^{p+1}\to 0$ as $n\to \infty$ in \eqref{unmoeglich}. 

\smallskip
\noindent
{\em Step 3:} Shifting $v_n^+$. By Step 2, i.e., $v_n^+$ not converging to $0$ in $L^{p+1}(\Gamma\times\T_T)$, Lemma~\ref{ConccompLemma} applies and we find $\delta>0$, a sequence of integers $(m_n)_{n\in\N}$ and a subsequence of $(v_n)_{n\in\N}$ (again denoted by $(v_n)_{n\in\N}$) such that
\begin{align} \label{Beinsyn}
\int_{\Gamma_{m_n}\times\T_T} |v_n^+|^2 \,dx\,dt \geq \delta>0 \text{ for all } n\in\N.
\end{align}
Next we shift $v_n^+$ in such a way that we can make use of compact embeddings for the shifted sequence. To this end we define new functions $v_n^\ast$ by 
\begin{align*}
v_n^{\ast}(x,t) \coloneqq v_n(x +2\pi m_n,t).
\end{align*}
Note that shifting does not change norms in $\mathcal{H}$ and shifting commutes with the spectral projections $P^{\pm}$ since the operators $L_k$ are shift invariant, i.e., $v_n^{\ast,+}=v_n^{+,\ast}$. Then \eqref{Beinsyn} entails
\begin{align*}
\int_{\Gamma_0\times\T_T} |v_n^{\ast,+}|^2 \,dx\,dt = \int_{\Gamma_{m_n}\times\T_T} |v_n^+|^2 \,dx\,t\geq \delta \text{ for all } n\in\N.
\end{align*}
We know that (up to a subsequence) $v_n^{\ast}\rightharpoonup v^{\ast}\in \mathcal{H}$ as $n\to \infty$. The compact embedding into $L^2(\Gamma_0\times\T_T)$ from Theorem~\ref{hlp} yields $\|v^{\ast,+}\|_{L^2(\Gamma\times\T_T)}\neq 0$, i.e., $v^{\ast,+}\not =0$ and hence $v^{\ast} \not=0$. This, however, contradicts the observation $v^{\ast}= w\mbox{-}\lim_{n\to \infty} v_n^\ast=0$ from the beginning of the proof. This contradiction finishes the proof of the boundedness of Palais-Smale sequences of $J|_{\mathcal{M}}$. 
\end{proof}

Finally, we can turn to our overall goal of this section and verify the following statement.

\begin{thm} \label{MinThm}
The functional $J$ admits a ground state, i.e., there exists $u\in \mathcal{M}$ such that $J'[u]=0$ and $J[u]=\inf_{v\in\mathcal{M}} J[v]$.
\end{thm}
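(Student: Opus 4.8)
The plan is to invoke the abstract critical point theorem (Theorem~35, Chapter~4) from \cite{SW}, whose hypotheses have been verified in the preceding lemmas, and then upgrade the abstract minimizing sequence to an actual ground state using the compactness information obtained from the concentration-compactness lemma. Concretely, Lemma~\ref{FormelNichtlin1} establishes assumption $(B_1)$ (weak lower semicontinuity of $J_1$, the sign/growth conditions, the subquadratic behaviour at the origin, and the superquadratic behaviour on weakly compact sets), Lemma~\ref{FormelNichtlin2} establishes assumption $(B_2)$ (existence, uniqueness and global-maximizer property of the Nehari-point map $m_1$, together with the uniform lower bound $\|m_1(w)^+\|_{\mathcal H}\geq\delta$), and the splitting $\mathcal H=\mathcal H^+\oplus\mathcal H^-$ with the explicit spectral projections $P^\pm$ provides the required linking geometry. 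Hence the abstract theorem yields a Palais--Smale sequence $(u_n)_{n\in\N}\subset\mathcal M$ for $J|_{\mathcal M}$ at the level $c:=\inf_{\mathcal M}J>0$, i.e. $J[u_n]\to c$ and $J'[u_n]\to 0$ in $\mathcal H^\ast$.

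The next step is to extract a convergent subsequence. By Lemma~\ref{NichtLin5} the sequence $(u_n)$ is bounded in $\mathcal H$, so after passing to a subsequence $u_n\rightharpoonup u$ in $\mathcal H$. The key point is that $u\neq 0$: arguing exactly as in Step~2 and Step~3 of the proof of Lemma~\ref{NichtLin5}, the uniform lower bound $\|u_n^+\|_{\mathcal H}\geq\delta$ (valid since $u_n\in\mathcal M$) together with the inequality $J[u_n]\geq J[s u_n^+]$ forces $\|u_n^+\|_{L^{p+1}(\Gamma\times\T_T)}$ to stay bounded away from $0$; then Lemma~\ref{ConccompLemma} produces integers $m_n$ with $\int_{\Gamma_{m_n}\times\T_T}|u_n^+|^2\,dx\,dt\geq\delta'>0$, and after translating by $2\pi m_n$ — which leaves all $\mathcal H$-norms, the functional $J$, the manifold $\mathcal M$, and the projections $P^\pm$ invariant because the operators $L_k$ are shift-invariant — the compact embedding $\mathcal H\hookrightarrow L^2(\Gamma_0\times\T_T)$ of Theorem~\ref{hlp} gives a nonzero weak limit. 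Replacing $(u_n)$ by this translated sequence, we may assume $u_n\rightharpoonup u$ with $u\neq 0$. Passing to the limit in $J'[u_n](v)=o(1)\|v\|_{\mathcal H}$ for fixed $v$, using the weak continuity of the linear form $B(\cdot,v)$ and the compactness of the nonlinear term on finite subgraphs (and a density/cut-off argument to handle the whole graph), one obtains $J'[u]=0$; in particular $u\in\mathcal M$, so $J[u]\geq c$.

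It remains to show $J[u]\leq c$, which yields $J[u]=c$ and completes the proof. Here one uses the representation $J[w]=\big(\tfrac12 J_1'[w](w)-J_1[w]\big)$ valid at critical points (from $J'[w](w)=0$), i.e. $J$ equals a nonnegative weakly-lower-semicontinuous functional of $w$ on $\mathcal M$ by Lemma~\ref{FormelNichtlin1}(a); hence $J[u]\leq\liminf_n J[u_n]=c$ along the Palais--Smale sequence (using that the Palais--Smale condition gives $J_1'[u_n](u_n)=2J[u_n]+o(1)$). Combining the two inequalities gives $J[u]=c=\inf_{\mathcal M}J$, so $u$ is the desired ground state. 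Finally, the ``$-$''-case is handled by the same argument with the roles of $\mathcal H^+$ and $\mathcal H^-$ interchanged, as indicated in the text. I expect the main obstacle to be Step two, the non-vanishing of the weak limit: one must carefully combine the Nehari-manifold lower bound, the concentration-compactness lemma in this non-Sobolev setting, and shift-invariance, and then ensure that the shifted Palais--Smale sequence still converges weakly to a critical point — the passage to the limit in the nonlinear term requires the local compactness of the embedding together with an approximation argument on the unbounded graph.
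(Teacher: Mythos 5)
Your proposal is correct and follows essentially the same route as the paper: the abstract theorem from \cite{SW} yields a minimizing Palais--Smale sequence in $\mathcal{M}$, Lemma~\ref{NichtLin5} gives boundedness, non-vanishing of the weak limit is obtained exactly as in the paper via the Nehari lower bound, Lemma~\ref{ConccompLemma}, shifting and the local compact embedding of Theorem~\ref{hlp}, $J'[u]=0$ follows by testing with compactly supported functions, and $J[u]\leq \inf_{\mathcal{M}}J$ follows from the identity $J=\frac{p-1}{p+1}\int|\cdot|^{p+1}$ along the sequence together with weak lower semicontinuity (the paper uses a.e.\ convergence plus Fatou, which is equivalent here). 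Two cosmetic points: the parenthetical formula $J_1'[u_n](u_n)=2J[u_n]+o(1)$ is off by the term $2J_1[u_n]$ --- the relation you state just before, $J[w]=\tfrac12 J_1'[w](w)-J_1[w]$ when $J'[w](w)=0$, is what is actually needed and suffices --- and the step ``$J'[u]=0$, $u\neq 0$, hence $u\in\mathcal{M}$'' silently uses the one-line sign argument (testing $J'[u](u)=0$ excludes $u\in\mathcal{H}^-$) that the paper spells out explicitly.
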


The proof requires the following variant of a concentration-compactness Lemma of P.~L.~Lions, cf. Lemma~1.21 in \cite{Willem} for a similar result in Sobolev-spaces. Its proof is given in the Appendix.

\begin{lem} \label{ConccompLemma}
Let $q\in [2,\infty)$ be given and let $(u_n)_{n\in\N}$ be a bounded sequence in $\mathcal{H}$ and 
\begin{align} \label{Bedconccomp}
\sup_{m\in \Z} \int_{\Gamma_m\times\T_T} |u_n|^q \,dx\,dt \to 0 \text{ as } n\to\infty.
\end{align}  
Then $u_n\to 0$ in $L^{\tilde{q}}(\Gamma\times\T_T)$ as $n\to\infty$ for all $\tilde{q}\in (2,\infty)$.
\end{lem}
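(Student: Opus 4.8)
The plan is to adapt the classical Lions-type vanishing lemma to the Bloch-based space $\mathcal{H}$. The key point is that $\mathcal{H}$ is \emph{not} a Sobolev space, but by Theorem~\ref{hlp} it embeds continuously into $L^q(\Gamma\times\T_T)$ for every $q\in[2,\infty)$, and, more importantly, the truncated embedding $\mathcal{T}_M\circ S$ maps $\mathcal{H}$ \emph{continuously into} $H^1_c(\Gamma\times\T_T)$ with a norm bound $C(M)$ (this is exactly the estimate carried out in the proof of Theorem~\ref{hlp}). So I would first reduce matters to a single cell: on each $\Gamma_m\times\T_T$ I want an interpolation inequality of the form
\begin{equation*}
\|w\|_{L^{\tilde q}(\Gamma_m\times\T_T)} \leq C\,\|w\|_{L^q(\Gamma_m\times\T_T)}^{1-\theta}\,\|w\|_{H^1(\Gamma_m\times\T_T)}^{\theta}
\end{equation*}
for some $\theta=\theta(q,\tilde q)\in(0,1)$, valid uniformly in $m$ (by translation-invariance of the cell geometry). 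Such an inequality is standard on a fixed bounded two-dimensional domain via Gagliardo--Nirenberg; on the metric graph cell it follows edgewise together with the continuity of traces. Raising to the $\tilde q$-th power, summing over $m\in\Z$, and applying Hölder in $m$ with exponents $\frac{1}{1-\theta}$ and $\frac{1}{\theta}$ (and choosing $q,\tilde q$ so that $q(1-\theta)\tilde q\geq 2$ — which is automatic since $q\geq 2$) yields
\begin{equation*}
\|w\|_{L^{\tilde q}(\Gamma\times\T_T)}^{\tilde q} \leq C\Bigl(\sup_{m\in\Z}\|w\|_{L^q(\Gamma_m\times\T_T)}^q\Bigr)^{\frac{(1-\theta)\tilde q}{q}\cdot\frac{q}{\text{something}}}\cdot\Bigl(\sum_{m\in\Z}\|w\|_{H^1(\Gamma_m\times\T_T)}^{2}\Bigr)^{\cdots}.
\end{equation*}

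The genuine obstacle is that $\sum_m\|w\|_{H^1(\Gamma_m\times\T_T)}^2$ need \emph{not} be controlled by $\|w\|_{\mathcal{H}}$ — indeed $\mathcal{H}$ only controls $\|L_k\|$-type norms, and the $\mathcal{H}^-$ part has no $H^1$ bound at all. This is precisely why one cannot simply quote the Sobolev-space proof. To get around it I would use the decomposition $w = \mathcal{T}_M\circ S\,w + (\mathrm{Id}-\mathcal{T}_M\circ S)\,w =: w_M + r_M$. For fixed $M$, the first piece $w_M$ lies in $H^1_c(\Gamma\times\T_T)$ with $\sum_m\|w_M\|_{H^1(\Gamma_m\times\T_T)}^2 = \|w_M\|_{H^1(\Gamma\times\T_T)}^2\leq C(M)\|w\|_{\mathcal{H}}^2$, so the argument above applies to $w_M$ and shows $\|w_{n,M}\|_{L^{\tilde q}(\Gamma\times\T_T)}\to 0$ as $n\to\infty$, using \eqref{Bedconccomp} together with the fact that $\|w_{n,M}\|_{L^q(\Gamma_m\times\T_T)}\leq C(M)\|u_n\|_{L^q(\Gamma'_m\times\T_T)}$ on a finite union of neighbouring cells $\Gamma'_m$ (the operator $\mathcal{T}_M\circ S$ has a kernel localized in $l$, hence it is \emph{not} strictly local in $x$, but since $\phi_m(l,\cdot)e^{ilx}$ is bounded and periodic, one still gets a bound of $\|w_M\|_{L^q(\Gamma_m\times\T_T)}$ by the $L^2$-mass of $u$ near cell $m$; alternatively one controls $\|w_M\|_{L^\infty}$ by $\|\hat{\tilde u}\|_{L^1}$ and localizes that). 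For the remainder $r_M$ we use the operator-norm convergence $\mathcal{T}_M\to\mathcal{T}$ into $L^{\tilde q}(\Gamma\times\T_T)$ established in the proof of Theorem~\ref{hlp}: $\|r_{n,M}\|_{L^{\tilde q}(\Gamma\times\T_T)}\leq \varepsilon(M)\|u_n\|_{\mathcal{H}}\leq \varepsilon(M)\cdot(\text{bdd})$ with $\varepsilon(M)\to 0$.

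Putting the two pieces together gives the standard $\varepsilon/2$ argument: fix $M$ so large that $\varepsilon(M)\sup_n\|u_n\|_{\mathcal{H}}<\varepsilon/2$, then take $n$ large so that $\|w_{n,M}\|_{L^{\tilde q}(\Gamma\times\T_T)}<\varepsilon/2$; conclude $\|u_n\|_{L^{\tilde q}(\Gamma\times\T_T)}<\varepsilon$. One technical check I would do carefully is the localization claim for $w_M$: because $\mathcal{T}_M\circ S$ is built from an integral over $l\in(-\tfrac12,\tfrac12]$ of functions $\phi_m(l,x)e^{ilx}$, which are genuinely supported on all of $\Gamma$, the image $w_M$ is not compactly supported even if $u$ is; however, the relevant quantity $\int_{\Gamma_m\times\T_T}|w_M|^q$ is controlled by the square of the Bloch coefficients $(\tilde u_k)_m(l)$ integrated against a bounded kernel, and since the Bloch transform is unitary and shift-equivariant, this in turn is bounded (uniformly in the shift index) by $\|u_n\|_{\mathcal H}^{q-2}\cdot\sup_{m}\int_{\Gamma_m\times\T_T}|u_n|^2$, and the latter supremum tends to $0$ by interpolating \eqref{Bedconccomp} (between $q$ and the global $L^{2}$ bound, or directly by the case $q=2$). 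This is the place where I expect to spend the most care; everything else is a routine combination of Gagliardo--Nirenberg, Hölder in the cell index, and the embedding estimates already proved in Theorem~\ref{hlp}.
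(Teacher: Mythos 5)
Your overall strategy (truncate in the Bloch/Fourier indices, use the $H^1$-bound $\|\mathcal{T}_M\circ S\,u\|_{H^1}\le C(M)\|u\|_{\mathcal H}$ for the truncated part, and operator-norm smallness of the tail) is genuinely different from the paper, and you correctly identify the central obstruction, namely that $\|u\|_{\mathcal H}$ does not control $\sum_m\|u\|^2_{H^1(\Gamma_m\times\T_T)}$. However, your resolution has a real gap at exactly the place you flag: the transfer of the vanishing hypothesis \eqref{Bedconccomp} from $u_n$ to $w_{n,M}=\mathcal{T}_M\circ S\,u_n$. The truncation in the band index $m$ is a spectral projection of the Kirchhoff Laplacian and is nonlocal in $x$; unitarity and shift-equivariance of the Bloch transform do \emph{not} yield a bound of $\|w_{n,M}\|_{L^q(\Gamma_j\times\T_T)}$ by the local mass of $u_n$ on finitely many neighbouring cells, nor by $\|u_n\|_{\mathcal H}^{q-2}\sup_m\|u_n\|^2_{L^2(\Gamma_m\times\T_T)}$ as you sketch — the projection onto finitely many Fourier modes on the line is shift-equivariant and unitary-conjugated, yet completely delocalizes a bump, so local mass of the projection is a priori controlled only by \emph{global} mass. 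What your argument actually needs is $\ell^1$-summable (e.g.\ exponential) off-diagonal decay of the kernel of the band projection, so that $\sup_j\|w_{n,M}\|_{L^2(\Gamma_j\times\T_T)}\le C_M\sup_j\|u_n\|_{L^2(\Gamma_j\times\T_T)}$. This is plausible here because the bands $|m|\le M$ are separated from the rest of the spectrum by a gap (a Riesz-projection/Combes--Thomas argument using analyticity in the quasimomentum), but it is not established in your proposal and does not follow from anything in the paper, where the eigenfunctions $\phi_m(l,\cdot)$ are only chosen \emph{measurably} in $l$. Note also that the obvious patch — writing $w_{n,M}=u_n-r_{n,M}$ and using only the global smallness $\|r_{n,M}\|\le\varepsilon(M)\|u_n\|_{\mathcal H}$ — fails, since after the cell-wise interpolation the error enters multiplied by $C(M)^2$, and $\varepsilon(M)C(M)$ is not controlled.

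For comparison, the paper circumvents the nonlocality issue by a different mechanism: it introduces the auxiliary space $H$ with norm $\sum_k\bigl(|k|^{-1}\|u_k'\|^2_{L^2(\Gamma)}+|k|\|u_k\|^2_{L^2(\Gamma)}\bigr)$, proves $\mathcal H\hookrightarrow H\hookrightarrow L^q(\Gamma\times\T_T)$ for $2\le q<4$ (Lemma~\ref{embedding_suboptimal}, using the spectral estimates behind \eqref{spec} and Gagliardo--Nirenberg in $x$), and exploits that $H$, unlike $\mathcal H$, is local in $x$, so multiplication by cut-offs $\phi_n$ is harmless and $\sum_n\|u\phi_n\|_H^2\lesssim\|u\|_{\mathcal H}^2$; the cell-wise interpolation then runs with $H$ in place of $H^1$, after first reducing to $q=2$ and $\tilde q\in(2,3)$ and recovering general $\tilde q$ by global interpolation against the $\mathcal H$-controlled $L^{q_2}$-norms. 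Two further, minor points in your write-up would also need tightening: the exponent bookkeeping in the summation over cells (you need the $H^1$-factor to appear with power exactly $2$, which forces a specific relation between $q$ and $\tilde q$; the paper's choice $s=4(q-1)/q$, $\theta=2/s$ does this), and the initial reduction of the hypothesis from general $q$ to a convenient exponent via Hölder on the bounded cells.
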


\noindent
\textit{Proof of Theorem~\ref{MinThm}:} Conditions (B1), (B2) and (i) and (ii) of Theorem~35 in \cite{SW} are fulfilled, and only (iii) does not hold, so that $J$ does not satisfy the Palais-Smale condition. As a consequence, Theorem~35 in \cite{SW} only provides a minimizing Palais-Smale $(u_n)_{n\in \N}$ in $\mathcal{M}$ with 
$J'[u_n]\to 0$ as $n\to\infty$. Lemma~\ref{NichtLin5} guarantees that $(u_n)_{n\in\N}$ is bounded. Thus, there is $u\in\mathcal{H}$, and a subsequence (again denoted by $(u_n)_{n\in\N}$) such that $u_n\rightharpoonup u$ as $n\to\infty$. We now proceed in four steps: 

\smallskip

First claim: $J'[u]=0$. Let $k\in\Zodd$. By Remark~\ref{dense} it is enough to check $J'[u](v)=0$ for $v\in \mathcal{H}$ with compact support. For such $v$ we conclude by weak convergence that 
$$
J_0'[u_n](v)= B(u_n,v) \to B(u,v)=J_0'[u](v) \mbox{ as } n\to \infty
$$
and due to the compact support property of $v$ and the compactness of the embedding $\mathcal{H}\hookrightarrow L^{p+1}(\Gamma_K\times\T)$, $1<p<\infty$, for any compact subgraph $\Gamma_K\subset\Gamma$, cf. Theorem~\ref{hlp}, we obtain
\begin{align*}
J_1'[u_n](v)&= \int_{\Gamma_K\times\T_T} |u_n|^{p-1} u_n v \,dx\,t \to J_1'[u](v) \mbox{ as } n\to \infty.
\end{align*}
Combining the two convergence results we deduce $J'[u]=0$. Note that this chain of arguments only uses that $(u_n)_{n\in\N}$ is a Palais-Smale sequence for $J$ and not $u_n\in \mathcal{M}$.

\smallskip

Second claim: Here we show the existence of a new Palais-Smale sequence $(v_n)_{n\in\N}$ such that $J[v_n]\to \inf_{\mathcal{M}}J$ and that its weak limit $v$ belongs to $\mathcal{M}$ (we do not claim that $v_n\in \mathcal{M}$). For this purpose we can repeat Steps 2 and 3 from the proof of Lemma~\ref{NichtLin5}. First we obtain that $u_n^+$ does not converge to $0$ in $L^{p+1}( \Gamma\times\T_T)$. From this we obtain (via Lemma~\ref{ConccompLemma}) that 
\begin{align} \label{tildeuneqnulleins}
\liminf_{n\to\infty} \sup_{m\in \Z} \int_{\Gamma_m\times\T_T} |u_{n}^+|^2 \,dx\,dt>0.
\end{align}
Therefore we find $\delta>0$, a sequence $(m_n)_{n\in \N}$ in $\Z$ and a subsequence of $(u_n)_{n\in\N}$ (again denoted by $(u_n)_{n\in\N}$) such that 
\begin{align} \label{Beinsyn_again}
\int_{\Gamma_{m_n}\times\T_T} |u^+_n|^2 \,dx\,t \geq \delta>0 \text{ for all } n\in\N.
\end{align}
Setting  
\begin{align*}
v_n(x,t) \coloneqq u_n(x+2\pi m_n,t)
\end{align*}
we obtain that $(v_n)_{n\in \N}$ is again a Palais-Smale sequence for $J$ with $\lim_{n\to\infty} J[v_n]=\inf_{\mathcal{M}} J$ and (as in Step 3 of Lemma~\ref{NichtLin5}) that
\begin{align*}
\int_{\Gamma_0\times\T_T} |v_n^+|^2 \,dx\,dt \geq \delta>0 \text{ for all } n\in\N.
\end{align*}
By making use of the compact embedding to $L^2(\Gamma_0\times\T_T)$ from Theorem~\ref{hlp} up to a subsequence we find that $v_n\rightharpoonup v\in \mathcal{H}$ as $n\to \infty$ with $v\not=0$. The property $J'[v]=0$ follows from the first claim. It remains to show $v^+\neq 0$. Assume by contradiction that $v^+=0$, i.e., $v=v^-$. By testing $J'[v]=0$ with $v$ we infer
\begin{align*}
- \|v^-\|_{\mathcal{H}}^2 = \int_{\Gamma\times\T_T} |v|^{p+1} \,dx\,dt,
\end{align*} 
a contradiction since the two expressions have different signs. Thus, $v\in \mathcal{M}$.

\smallskip

Third claim: $v$ minimizes $J$ on $\mathcal{M}$. Since $v\in \mathcal{M}$ we obviously have $J[v]\geq \inf_{\mathcal{M}} J$. Since for a suitable subsequence $v_n\to v$ pointwise a.e. on $\Gamma\times\T_T$ the reverse inequality is a consequence of Fatou's Lemma as follows:
\begin{align*}
\inf_{\mathcal{M}} J &= \lim_{n\to \infty} J[v_n]- \frac{1}{2} J'[v_n](v_n) = 
\frac{p-1}{p+1}\lim_{n\to \infty} \int_{\Gamma\times\T_T}|v_n|^{p+1} \,dx\,dt \\
& \geq \frac{p-1}{p+1}\int_{\Gamma\times\T_T} |v|^{p+1} \,dx\,dt = J[v]-\frac{1}{2} J'[v](v) = J[v].
\end{align*}

\smallskip

Fourth claim (infinitely many critical points): It remains to show the multiplicity result. Recall that Lemma~\ref{specific_values} and hence Theorem~\ref{hlp} guarantees the embedding of $\mathcal{H}$ into all $L^q(\Gamma\times\T_T)$-spaces for $2\leq q<\infty$ provided $\kappa\in \Nodd$ is chosen sufficiently large. Here $\kappa\Zodd$ selects the admissible frequencies $k\omega$ with $k\in \kappa\Zodd$ in the Fourier expansion in time of elements in $\mathcal{H}$. These elements are then $\frac{2\pi}{\kappa}$-antiperiodic in time. If we consider our construction of a minimizer of $J|_{\mathcal{M}}$ for all sufficiently large $\kappa\in \Nodd$, then we obtain a sequence of minimizers $(u_{\kappa})_{\kappa\in\Nodd}$, and in particular $u_\kappa\not =0$. Some of the elements in this sequence may be equal, but we will see that infinitely many are mutually different. For this purpose, assume for contradiction that the set of minimizers $\{u_{\kappa}: \kappa\in \Nodd \text{ sufficiently large}\}$ is finite. But since $u_\kappa\rightharpoonup 0$ as $\kappa\to\infty$ and none of the $u_\kappa$ is equal to $0$ this is impossible. This contradiction shows the existence of infinitely many distinct critical points of the function $J$ and finishes the proof of the theorem.
\qed

\begin{rem} Let us explain how the case of ''$-$'' in \eqref{waveeq} can be treated. In this case one keeps the functional $J_1$ but replaces $J_0$ by $-J_0$ and flips the spaces $\mathcal{H}^+$ and $\mathcal{H}^-$. Since $J_0$ is an indefinite functional this is without relevance for the proof strategy. All proofs of this section can be carried over with no change.
\end{rem}

\section{Regularity part of Theorem~\ref{main} and Theorem~\ref{prop_ivp}} \label{sec:regularity}

Here we first prove the regularity part of Theorem~\ref{main}. The claimed regularity follows from Lemma~\ref{nonlinear_reg1}, where we show that every critical point $u\in \mathcal{H}$ of the functional $J$ is a weak solution $u\in H_c^1(\Gamma\times\T_T)$ of \eqref{waveeq}, and from Lemma~\ref{nonlinear_reg2}, where we show that weak solutions belong to $H^2(\Gamma\times\T_T)$ and are strong solutions which fulfill \eqref{waveeq} pointwise a.e. as well as the Kirchhoff conditions \eqref{kirchhoff_abl}. Finally, we show Theorem~\ref{prop_ivp}.

We begin by first analyzing solutions of the linear problem
\begin{equation} \label{waveeq_linear}
Lv = f \qquad \mbox{ with } L = \partial_t^2-\partial_x^2.
\end{equation}
Here we let $f\in \mathcal{H}'$, set $v(x,t)=\sum_{k\in \kappa\Zodd} v_k(x)e^{ik\omega t}$ and consider a solution $v\in\mathcal{H}$ of \eqref{waveeq_linear} in the sense that 
$$
\sum_{k\in \kappa\Zodd} b_{L_k}(v_k,\phi_k) = \langle f,\phi\rangle \quad \mbox{ for all } \phi \in \mathcal{H}
$$
where $\langle \cdot,\cdot\rangle$ is the duality bracket between $\mathcal{H}'$ and $\mathcal{H}$. 

\begin{lem} \label{lin_reg1} For $f\in \mathcal{H}'$ there exists a unique solution $v\in \mathcal{H}$ of \eqref{waveeq_linear} with $\|v\|_{\mathcal H}= \|f\|_{{\mathcal H}'}$. If additionally $f\in L^2(\Gamma\times\T_T)_{\text{symm}}$ then $v\in H^1_c(\Gamma\times\T_T)_{\text{symm}}$, $\|v\|_{H^1(\Gamma\times\T_T)}\leq C\|f\|_{L^2(\Gamma\times\T_T)}$ and $v$ satisfies \eqref{waveeq_linear} in the sense that 
$$
\int_0^T \int_\Gamma \left(-v_t\phi_t + v_x\phi_x+\alpha v\phi\right) \,d(x,t) = \int_0^T \int_\Gamma f \phi \,d(x,t) \quad \mbox{ for all } \phi \in \mathcal{H}\cap H^1_c(\Gamma\times\T_T)_{\text{symm}}.
$$
\end{lem}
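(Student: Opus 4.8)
The plan is to exploit the Bloch-variable diagonalization of $L$ that has already been set up. Writing $f = \sum_{k\in\kappa\Zodd} f_k(x) e^{ik\omega t}$ with $f_k \in H^1_c(\Gamma)_{\text{symm}}'$ and expanding each $f_k$ in the eigenfunctions $\phi_m(l,\cdot)$, one sees that the weak formulation $\sum_k b_{L_k}(v_k,\phi_k) = \langle f,\phi\rangle$ decouples completely: for each triple $(k,m,l)$ the coefficient $(\tilde v_k)_m(l)$ must equal $(\tilde f_k)_m(l)/(\lambda_m(l) - \omega^2 k^2 + \alpha)$. By assumption \eqref{spec} (equivalently \eqref{spec_equiv}), the denominator is bounded away from zero uniformly in $(k,m,l)$, so this formula defines a unique element of $\mathcal{H}$, and the identity $\|v\|_{\mathcal H} = \|f\|_{\mathcal{H}'}$ follows because $\mathcal{H}'$ can be identified with the weighted $\ell^2$-space with weight $|\lambda_m(l)-\omega^2 k^2+\alpha|^{-1}$, which is exactly the reciprocal of the weight defining $\|\cdot\|_{\mathcal H}$. (Alternatively one invokes the Lax–Milgram/Riesz representation theorem directly on the Hilbert space $\mathcal{H}$, since $B$ restricted to $\mathcal{H}^+\times\mathcal{H}^+$ and to $\mathcal{H}^-\times\mathcal{H}^-$ is, up to sign, equivalent to the inner product; the isometry statement is then the norm identity for the Riesz map composed with the sign flip on $\mathcal{H}^-$.)

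For the second half, suppose additionally $f \in L^2(\Gamma\times\T_T)_{\text{symm}}$. The Plancherel identities \eqref{plan_eig} give $\|f\|_{L^2}^2 = \int_{-1/2}^{1/2}\sum_{m}\sum_{k} |(\tilde f_k)_m(l)|^2\,dl$, and since $v$'s coefficients are those of $f$ divided by $\lambda_m(l)-\omega^2k^2+\alpha$, we want to bound
\begin{equation*}
\|v\|_{H^1(\Gamma\times\T_T)}^2 \;\sim\; \int_{-1/2}^{1/2}\sum_{m}\sum_{k} |(\tilde v_k)_m(l)|^2\,(1+\lambda_m(l)+\omega^2k^2)\,dl.
\end{equation*}
The key pointwise inequality is that $(1+\lambda_m(l)+\omega^2k^2) \le C\,|\lambda_m(l)-\omega^2k^2+\alpha|^2$ for a constant $C$ depending on $\delta^\ast$ and $\alpha$. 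This is elementary: either $\lambda_m(l) \ge 2\omega^2 k^2$, in which case $|\lambda_m - \omega^2 k^2 + \alpha| \ge \tfrac12 \lambda_m \ge \tfrac12\max(\lambda_m,\omega^2k^2)$ up to constants; or $\lambda_m(l) < 2\omega^2 k^2$, in which case $1 + \lambda_m + \omega^2 k^2 \le 1 + 3\omega^2 k^2$ and one uses that $|\lambda_m - \omega^2 k^2 + \alpha|$ grows at least linearly in $\omega^2 k^2$ by the spectral gap analysis underlying Lemma~\ref{specific_values} (the gap is $\ge \kappa\delta_0/2$ in Case~1 and also grows there, while in Case~2 the distance is comparable to $\sqrt{\lambda_m}\cdot|\sqrt{\lambda_m}-\sqrt{\omega^2k^2-\alpha}|$, and one of the two factors is $\gtrsim \omega|k|$). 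Multiplying $|(\tilde v_k)_m(l)|^2 |\lambda_m-\omega^2k^2+\alpha|$ — which is the $\mathcal{H}$-density — by the remaining factor $|\lambda_m-\omega^2k^2+\alpha| \ge \delta^\ast$ recovers exactly $|(\tilde f_k)_m(l)|^2$ up to the constant $C$, giving $\|v\|_{H^1}^2 \le C\|f\|_{L^2}^2$. Finally, membership in $H^1_c$ (rather than merely $H^1$) and the symmetry follow because each summand $\phi_m(l,\cdot)e^{i(lx+\omega kt)}$ satisfies the continuity conditions \eqref{kirchhoff_cont_time} and the upper/lower symmetry, and these are closed conditions preserved under the $L^2$-convergent (indeed $H^1$-convergent) Bloch integral, exactly as argued at the end of the proof of Theorem~\ref{hlp}.

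The last claim — that $v$ satisfies the stated integral identity for all test functions $\phi\in\mathcal{H}\cap H^1_c(\Gamma\times\T_T)_{\text{symm}}$ — is then just unwinding: for such $\phi$ the pairing $\langle f,\phi\rangle$ is literally $\int_0^T\int_\Gamma f\phi\,d(x,t)$ because $f\in L^2$, while $\sum_k b_{L_k}(v_k,\phi_k) = \int_0^T\int_\Gamma(-v_t\phi_t + v_x\phi_x + \alpha v\phi)\,d(x,t)$ is the definition of the forms $b_{L_k}$ combined with Remark~\ref{planch_ableitungen} and the orthogonality of the time-exponentials $e^{ik\omega t}$. One subtlety to address is that the weak formulation is a priori only assumed to hold for $\phi\in\mathcal{H}$, whereas we now want it for $\phi\in\mathcal{H}\cap H^1_c(\Gamma\times\T_T)_{\text{symm}}$; but the latter space is contained in $\mathcal{H}$ (its elements have finite $\mathcal{H}$-norm precisely because of the $H^1$-equivalence just used, in the reverse direction: $|\lambda_m-\omega^2k^2+\alpha| \le C(1+\lambda_m+\omega^2k^2)$ trivially), so no extension argument is needed.

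I expect the main obstacle to be the pointwise weight comparison $1+\lambda_m(l)+\omega^2 k^2 \le C|\lambda_m(l)-\omega^2k^2+\alpha|^2$: it is not uniform in the "obvious" sense and genuinely requires the refined two-case gap estimates from the proof of Lemma~\ref{specific_values} (in particular the observation that the gap grows, not merely stays positive, as $|k|\to\infty$, and that near the band where $|m|\approx \omega|k|$ the distance to the band edge is controlled via the factorization $\lambda_m - \omega^2k^2+\alpha = (\sqrt{\lambda_m}-\sqrt{\omega^2k^2-\alpha})(\sqrt{\lambda_m}+\sqrt{\omega^2k^2-\alpha})$). Everything else — uniqueness, the isometry, the symmetry and continuity-at-vertices bookkeeping — is routine Hilbert-space and Bloch-transform manipulation.
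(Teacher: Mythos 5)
Your proposal is correct and takes essentially the same route as the paper: Riesz representation on $\mathcal{H}^\pm$ (equivalently, dividing the Bloch coefficients by $\lambda_m(l)-\omega^2k^2+\alpha$) gives existence, uniqueness and the isometry, and the $H^1$ bound rests on exactly the gap estimates the paper uses — \eqref{spec} together with $|\lambda_m(l)-\omega^2k^2+\alpha|\gtrsim \lambda_m(l)+1$ for $|k|\le K$, and the factorization $(\sqrt{\lambda_m}-\sqrt{\omega^2k^2-\alpha})(\sqrt{\lambda_m}+\sqrt{\omega^2k^2-\alpha})$ with \eqref{sufficient_condition} for $|k|>K$ — which the paper applies separately to $\partial_t v$ and $\partial_x v$ after splitting $v=v_1+v_2$, while you package them into the single weight inequality $1+\lambda_m(l)+\omega^2k^2\le C|\lambda_m(l)-\omega^2k^2+\alpha|^2$; the $H^1_c$/symmetry bookkeeping via the Bochner-integral trace argument is likewise the paper's. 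Only a cosmetic slip: the gap grows linearly in $\omega|k|$ (not in $\omega^2k^2$), which is precisely what your factorization delivers and all that is needed.
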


\begin{proof} The splitting $\mathcal{H}=\mathcal{H}^+\oplus \mathcal{H}^-$ transfers to $\mathcal{H}'=(\mathcal{H}^+)'\oplus(\mathcal{H}^-)'$ and hence $f\in \mathcal{H}'$ can be split as $f=f^++f^-$ with $f^{\pm}\in (\mathcal{H}^\pm)'$. If we denote by $\langle \cdot,\cdot\rangle_\mathcal{H}$ the inner product in $\mathcal{H}$ then by the Riesz representation theorem there exist $v^\pm\in \mathcal{H}^\pm$ with $\langle v^\pm,\phi\rangle_{\mathcal{H}} = \pm\langle f^\pm,\phi\rangle$ for all $\phi\in \mathcal{H}$. Hence 
$$
\sum_{k\in \kappa\Zodd} b_{L_k}(v_k,\phi_k) = \langle v^+,\phi^+\rangle_{\mathcal H} - \langle v^-,\phi^-\rangle_{\mathcal H} = \langle f^+,\phi^+\rangle + \langle f^-,\phi^-\rangle =\langle f,\phi\rangle
$$
for all $\phi\in \mathcal{H}$. This finishes the existence part of the lemma. If we know additionally $f\in L^2(\Gamma\times\T_T)_{\text{symm}}$ then using the Floquet-Bloch-Fourier expansion we have  
$$
f(x,t) = \int_{-\frac{1}{2}}^\frac{1}{2} \sum_{k\in\kappa\Zodd}\sum_{m\in\Z} (\tilde f_k)_m(l) \phi_m(l,x) e^{i(lx+\omega kt)}\,dl
$$
and hence 
\begin{align}
v(x,t) & = \int_{-\frac{1}{2}}^\frac{1}{2} \sum_{k\in\kappa\Zodd}\sum_{m\in\Z} \frac{(\tilde f_k)_m(l)}{\lambda_m(l)-k^2\omega^2+\alpha} \phi_m(l,x) e^{i(lx+\omega kt)}\,dl \label{darstellung}\\
& = v_1(x,t) + v_2(x,t) \nonumber
\end{align}
where 
\begin{align*} 
v_1(x,t) & =\int_{-\frac{1}{2}}^\frac{1}{2} \sum_{\substack{k\in\kappa\Zodd\\ |k|\leq K}}\sum_{m\in\Z} \frac{(\tilde f_k)_m(l)}{\lambda_m(l)-k^2\omega^2+\alpha} \phi_m(l,x) e^{i(lx+\omega kt)}\,dl \\
v_2(x,t) & =\int_{-\frac{1}{2}}^\frac{1}{2} \sum_{\substack{k\in\kappa\Zodd\\ |k|>K}}\sum_{m\in\Z} \frac{(\tilde f_k)_m(l)}{\lambda_m(l)-k^2\omega^2+\alpha} \phi_m(l,x) e^{i(lx+\omega kt)}\,dl
\end{align*}
and where $K$ is chosen so large that $k^2\omega^2-\alpha \geq \frac{k^2\omega^2}{2}$ for $|k|>K$. Clearly, by \eqref{spec} we have $|\lambda_m(l)-k^2\omega^2+\alpha|\geq \delta^*$. Additionally, if $|k|\leq K$ and $|m|> m_0(K)$ then $|\lambda_m(l)-k^2\omega^2+\alpha|\geq \delta_0(\lambda_m(l)+1)$. For $|k|\leq K$ and $|m|\leq m_0(K)$ then the same holds by \eqref{spec} by possibly diminishing $\delta_0$. Hence for all $|k|\geq K$ and all $m\in\Z$ we have $|\lambda_m(l)-k^2\omega^2+\alpha|^2\geq \delta\delta_0(\lambda_m(l)+1)$. Thus 
\begin{align*}
\|\partial_t v_1\|_{L^2(\Gamma\times\T_T)}^2 &= T\int_{-\frac{1}{2}}^\frac{1}{2} \sum_{\substack{k\in\kappa\Zodd\\ |k|\leq K}}\sum_{m\in\Z} \frac{|(\tilde f_k)_m(l)|^2}{|\lambda_m(l)-k^2\omega^2+\alpha|^2} \,dl \\
& \leq \frac{K^2\omega^2}{{\delta^\ast}^2} T\int_{-\frac{1}{2}}^\frac{1}{2} \sum_{\substack{k\in\kappa\Zodd\\ |k|\leq K}}\sum_{m\in\Z} |(\tilde f_k)_m(l)|^2 \,dl
\end{align*}
and 
\begin{align*}
\|\partial_x v_1\|_{L^2(\Gamma\times\T_T)}^2 &=T\int_{-\frac{1}{2}}^\frac{1}{2} \sum_{\substack{k\in\kappa\Zodd\\ |k|\leq K}}\sum_{m\in\Z} \frac{|(\tilde f_k)_m(l)|^2 \lambda_m(l)}{|\lambda_m(l)-k^2\omega^2+\alpha|^2} \,dl \\
&\leq \frac{T}{\delta\delta_0}\int_{-\frac{1}{2}}^\frac{1}{2} \sum_{\substack{k\in\kappa\Zodd\\ |k|\leq K}}\sum_{m\in\Z} |(\tilde f_k)_m(l)|^2\,dl.
\end{align*}
In a similar way for $v_2$, we can use the estimate \eqref{sufficient_condition} of Lemma~\ref{sufficient_for_embedding} for the lower bound $|\lambda_m(l)-k^2\omega^2+\alpha|^2 \geq \delta^2(\sqrt{\lambda_m(l)}+ \sqrt{k^2\omega^2-\alpha})^2$ so that
\begin{align*}
\|\partial_t v_2\|_{L^2(\Gamma\times\T_T)}^2 & \leq \frac{T}{\delta^2}\int_{-\frac{1}{2}}^\frac{1}{2} \sum_{\substack{k\in\kappa\Zodd\\ |k|> K}}\sum_{m\in\Z} \frac{|(\tilde f_k)_m(l)|^2 \omega^2 k^2}{(\sqrt{\lambda_m(l)}+ \sqrt{k^2\omega^2-\alpha})^2} \,dl \\
& \leq \frac{2T}{\delta^2}\int_{-\frac{1}{2}}^\frac{1}{2} \sum_{\substack{k\in\kappa\Zodd\\ |k|> K}}\sum_{m\in\Z} |(\tilde f_k)_m(l)|^2\,dl
\end{align*}
and 
\begin{align*}
\|\partial_x v_2\|_{L^2(\Gamma\times\T_T)}^2 & \leq \frac{T}{\delta^2}\int_{-\frac{1}{2}}^\frac{1}{2} \sum_{\substack{k\in\kappa\Zodd\\ |k|> K}}\sum_{m\in\Z} \frac{|(\tilde f_k)_m(l)|^2 \lambda_m(l)}{(\sqrt{\lambda_m(l)}+ \sqrt{k^2\omega^2-\alpha})^2} \,dl \\
& \leq \frac{T}{\delta^2} \int_{-\frac{1}{2}}^\frac{1}{2} \sum_{\substack{k\in\kappa\Zodd\\ |k|> K}}\sum_{m\in\Z} |(\tilde f_k)_m(l)|^2\,dl.
\end{align*}
so that altogether we have $\|\partial_t v\|_{L^2(\Gamma\times\T_T)}^2 \leq (\frac{K^2\omega^2}{{\delta^\ast}^2} + \frac{2}{\delta^2}) \|f\|_{L^2(\Gamma\times\T_T)}^2$ and $\|\partial_x v\|_{L^2(\Gamma\times\T_T)}^2 \leq \frac{2}{\delta^2} \|f\|_{L^2(\Gamma\times\T_T)}^2$. Our calculation also shows that $v$ given by \eqref{darstellung} is an $H^1(\Gamma_N\times\T_T)$-Bochner-integral for every compact subgraph $\Gamma_N$. This means that for a vertex $P$ with adjoining edges $\Gamma_0, \Gamma_+, \Gamma_-$ we know that $\trace{v(\cdot,t)}|_{x=P_0}$, $\trace{v(\cdot,t)}|_{x=P_+}$ and $\trace{v(\cdot,t)}|_{x=P_-}$ exist. Since for any of theses traces we have (due to the properties of the Bochner integral)
$$
\trace{v(\cdot,t)}|_{x=P} = \int_{-\frac{1}{2}}^\frac{1}{2} \sum_{k\in\kappa\Zodd}\sum_{m\in\Z} \frac{(\tilde f_k)_m(l)}{\lambda_m(l)-k^2\omega^2+\alpha} \trace{\bigl(\phi_m(l,\cdot) e^{i(l\cdot+\omega kt)}\bigr)}|_{x=P}\,dl
$$
the equality of the three traces at the vertex $P$ is passed on from $\phi_m(l,\cdot)e^{i(l\cdot+\omega kt)}$ to the solution $v$. Hence $v\in H^1_c(\Gamma\times\T_T)_{\text{symm}}$, and therefore it is easy to verify that 
$$
\sum_{k\in \kappa\Zodd} b_{L_k}(v_k,\phi_k) = 
\int_0^T \int_\Gamma \left(-v_t\phi_t + v_x\phi_x+\alpha v\phi\right) \,d(x,t) \mbox{ for all } \phi\in \mathcal{H}\cap H^1_c(\Gamma\times \T_T)_{\text{symm}}.
$$
This finishes the claim.
\end{proof}

The next lemma transfers the linear result to the nonlinear regime.

\begin{lem} \label{nonlinear_reg1} Let $u\in \mathcal{H}$ be a critical point of the functional $J$. Then $u\in H^1_c(\Gamma\times\T_T)_{\text{symm}}$ and $u$ satisfies \eqref{waveeq} in the sense of Definition~\ref{def_weak_sol}.
\end{lem}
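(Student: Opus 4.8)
The plan is to reduce the statement to the linear regularity Lemma~\ref{lin_reg1} applied with the source term $f\coloneqq\pm|u|^{p-1}u$, and then to enlarge the class of admissible test functions until it matches the one used in Definition~\ref{def_weak_sol}.

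First I would observe that criticality of $u$ means $J'[u](\phi)=0$ for all $\phi\in\mathcal H$, i.e. $\sum_{k\in\kappa\Zodd}b_{L_k}(u_k,\phi_k)=\pm\int_0^T\int_\Gamma|u|^{p-1}u\,\overline\phi\,d(x,t)$. By Lemma~\ref{specific_values} and Theorem~\ref{hlp} we have $u\in L^q(\Gamma\times\T_T)$ for every $q\in[2,\infty)$, so $f=\pm|u|^{p-1}u$ satisfies $|f|=|u|^p\in L^2(\Gamma\times\T_T)$; moreover $\phi\mapsto\int_0^T\int_\Gamma f\overline\phi$ is bounded on $\mathcal H$ by Hölder together with $\mathcal H\hookrightarrow L^{p+1}$, so $f\in\mathcal H'$, and $f$ is symmetric w.r.t. the upper/lower semicircles since $u$ is and $s\mapsto|s|^{p-1}s$ is odd. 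Next I would note that the identity above says exactly that $u$ is a weak $\mathcal H$-solution of $Lu=f$; by the uniqueness part of Lemma~\ref{lin_reg1} it agrees with the solution produced there, and since $f\in L^2(\Gamma\times\T_T)_{\text{symm}}$ that lemma gives $u\in H^1_c(\Gamma\times\T_T)_{\text{symm}}$, the estimate $\|u\|_{H^1}\le C\|f\|_{L^2}$, and
$$\int_0^T\int_\Gamma\bigl(-u_t\phi_t+u_x\phi_x+\alpha u\phi\bigr)\,d(x,t)=\pm\int_0^T\int_\Gamma|u|^{p-1}u\,\phi\,d(x,t)$$
for all $\phi\in\mathcal H\cap H^1_c(\Gamma\times\T_T)_{\text{symm}}$. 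This already yields the asserted $H^1_c$-regularity.

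To reach Definition~\ref{def_weak_sol} I would then extend the test-function class to arbitrary time-periodic $\phi\in H^1_c(\Gamma\times\T)$. Given such a $\phi$, I would first split it w.r.t. the semicircle symmetry, $\phi=\phi^{\mathrm s}+\phi^{\mathrm a}$ with $\phi^{\mathrm a}_+=-\phi^{\mathrm a}_-$; since $u,u_x,u_t,|u|^{p-1}u$ are symmetric, the $\Gamma_{n,+}$- and $\Gamma_{n,-}$-contributions cancel against $\phi^{\mathrm a}$, so it suffices to treat symmetric $\phi$. Next, using the oddness of the nonlinearity and the $\tfrac{T}{2\kappa}$-antiperiodicity of $u$ (built into $\mathcal H$), the function $f=\pm|u|^{p-1}u$ is $\tfrac{T}{2\kappa}$-antiperiodic and $T$-periodic, hence $u,u_t,u_x,f$ all have temporal Fourier spectrum in the lattice $\Lambda\coloneqq\{\omega k:k\in\kappa\Zodd\}$ (note $\Lambda=-\Lambda$). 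Expanding $\phi$ in temporal Fourier modes and integrating in $t$, only the projection $\phi_\Lambda$ of $\phi$ onto $\Lambda$ contributes to the four bilinear terms, and $\phi_\Lambda$ is symmetric, $T$-periodic, $\tfrac{T}{2\kappa}$-antiperiodic, with $H^1_c(\Gamma)$-valued Fourier coefficients, so $\phi_\Lambda\in\mathcal H\cap H^1_c(\Gamma\times\T_T)_{\text{symm}}$ — finiteness of $\|\phi_\Lambda\|_{\mathcal H}$ following from \eqref{spec}, which makes $b_{|L_k|}(\cdot,\cdot)^{1/2}$ equivalent to the $H^1(\Gamma)$-norm on each Fourier block. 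Finally, since $4\pi$ is an integer multiple of $T$ and, after replacing $\phi$ by $\phi_\Lambda$, all integrands are $T$-periodic, the integral of $\bigl(-u_t\phi_t+u_x\phi_x+\alpha u\phi\mp|u|^{p-1}u\phi\bigr)$ over $\Gamma\times\T$ equals $\tfrac{4\pi}{T}$ times the corresponding integral over $\Gamma\times[0,T]$, which vanishes by the identity already established; hence $u$ solves \eqref{waveeq} in the sense of Definition~\ref{def_weak_sol}.

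I expect the main obstacle to be this last step; the first two amount to bookkeeping on top of Lemma~\ref{lin_reg1}. The delicate points there are to justify the term-by-term handling of the temporal Fourier series — legitimate because $u\in H^1_c(\Gamma\times\T_T)$ and $f\in L^2$, so the relevant series converge in $L^2$ and the bilinear pairings are continuous — and to verify that the truncated test function $\phi_\Lambda$ really lies in $\mathcal H$, which hinges on the spectral gap \eqref{spec}. If convenient, Remark~\ref{dense} can be invoked beforehand to reduce to compactly supported $\phi$ with finitely many modes, but this is not strictly necessary.
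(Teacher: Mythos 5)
Your proposal is correct and follows essentially the same route as the paper: apply Lemma~\ref{lin_reg1} with $f=\pm|u|^{p-1}u\in L^2(\Gamma\times\T_T)_{\text{symm}}$ (using the $L^q$-embeddings from Lemma~\ref{specific_values}/Theorem~\ref{hlp}) and dispose of the antisymmetric part of a test function by the symmetry of $u$. Your additional step — projecting a general time-periodic $\phi$ onto the temporal frequency lattice $\{\omega k: k\in\kappa\Zodd\}$, checking $\phi_\Lambda\in\mathcal H\cap H^1_c(\Gamma\times\T_T)_{\text{symm}}$, and accounting for the $4\pi$ versus $T$ periods — merely spells out, correctly, what the paper compresses into the phrase that the symmetric part is ``covered by Lemma~\ref{lin_reg1}''.
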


\begin{proof} The proof consists in applying Lemma~\ref{lin_reg1} to $u$ with $f=\pm |u|^{p-1}u$. Note that $f\in L^2(\Gamma\times\T_T)_{\text{symm}}$ because $u\in L^q(\Gamma\times\T_T)_{\text{symm}}$ for all $q\in [2,\infty)$ by Lemma~\ref{specific_values}. Any time periodic test function $\phi\in H^1_c(\Gamma\times T_\T)$ can be split into a symmetric part belonging to $H^1_c(\Gamma\times T_\T)_\text{symm}$ and a corresponding antisymmetric part. Whereas testing with the symmetric part is covered by Lemma~\ref{lin_reg1}, testing with the antisymmetric part trivially yields $0$ in every integral.
\end{proof}

In order to reach higher time regularity we apply the method of differences and define for $u\in \mathcal{H}$ and $h\not = 0$ the function
$$
D_h u(x,t) := \frac{u(x,t+h)-u(x,t)}{h}.
$$
Note that $D_h u\in {\mathcal H}$. 

\begin{lem} \label{differenz} Let $v\in \mathcal{H}$. 
\begin{itemize}
\item[(i)] If $v_t\in L^2(\Gamma\times\T_T)$ then $\|D_h v\|_{L^2(\Gamma\times\T_T)} \leq \|\partial_t v\|_{L^2(\Gamma\times\T_T)}$.
\item[(ii)] If $\sup_{h>0}\|D_h v\|_{L^2(\Gamma\times\T_T)}<\infty$ then $\partial_t v\in L^2(\Gamma\times\T_T)$. 
\end{itemize}
In both cases $D_hv \to \partial_t v$ in $L^2(\Gamma\times\T_T)$ as $h\to 0$.
\end{lem}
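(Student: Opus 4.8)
\emph{Proof plan for Lemma~\ref{differenz}.}
The plan is to diagonalise $D_h$ via the time-Fourier expansion. Write $v=\sum_{k\in\kappa\Zodd}v_ke^{i\omega k t}$ with $v_k\in H^1_c(\Gamma)_{\text{symm}}$ and $\overline{v_k}=v_{-k}$; since $v\in\mathcal{H}$, assumption \eqref{spec} gives $v\in L^2(\Gamma\times\T_T)$, i.e. $\sum_k\|v_k\|_{L^2(\Gamma)}^2<\infty$. A direct computation shows that the $k$-th time-Fourier coefficient of $D_h v$ is $\mu_k(h)\,v_k$, where $\mu_k(h):=(e^{i\omega k h}-1)/h$. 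The functions $e^{i\omega k t}$, $k\in\kappa\Zodd$, are orthogonal in $L^2(\T_T)$ with squared norm $T$, so Plancherel in time yields $\|D_h v\|_{L^2(\Gamma\times\T_T)}^2=T\sum_{k}|\mu_k(h)|^2\,\|v_k\|_{L^2(\Gamma)}^2$, and likewise $\|\partial_t v\|_{L^2(\Gamma\times\T_T)}^2=T\sum_k\omega^2k^2\|v_k\|_{L^2(\Gamma)}^2$ whenever the right-hand side is finite.

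The only analytic ingredient is the elementary bound $|\mu_k(h)|=2|\sin(\omega k h/2)|/|h|\le|\omega k|$, valid for every $h\ne0$, together with the pointwise limit $\mu_k(h)\to i\omega k$ as $h\to0$. For (i) I insert $|\mu_k(h)|^2\le\omega^2k^2$ termwise into the Plancherel identity, obtaining $\|D_h v\|_{L^2}^2\le\|\partial_t v\|_{L^2}^2$. For (ii), assuming $\sup_{h>0}\|D_h v\|_{L^2}<\infty$, I apply Fatou's lemma to the series (counting measure on $\kappa\Zodd$) with the nonnegative terms $|\mu_k(h)|^2\|v_k\|_{L^2}^2$, whose pointwise limit in $k$ is $\omega^2k^2\|v_k\|_{L^2}^2$; this gives $T\sum_k\omega^2k^2\|v_k\|_{L^2}^2\le\liminf_{h\to0}\|D_h v\|_{L^2}^2\le\sup_{h>0}\|D_h v\|_{L^2}^2<\infty$, hence $\partial_t v\in L^2(\Gamma\times\T_T)$.

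For the final claim $D_h v\to\partial_t v$ in $L^2$, which now applies in both cases since $\partial_t v\in L^2(\Gamma\times\T_T)$, I write $\|D_h v-\partial_t v\|_{L^2}^2=T\sum_k|\mu_k(h)-i\omega k|^2\|v_k\|_{L^2}^2$. Each summand tends to $0$ as $h\to0$ and is dominated by $(|\mu_k(h)|+|\omega k|)^2\|v_k\|_{L^2}^2\le 4\omega^2k^2\|v_k\|_{L^2}^2$, a summable bound independent of $h$ because $\partial_t v\in L^2$; the dominated convergence theorem for series then gives $\|D_h v-\partial_t v\|_{L^2}\to0$. There is no real obstacle in this proof: it is a routine combination of Plancherel's identity, the inequality $|e^{i\theta}-1|\le|\theta|$, and Fatou/dominated convergence; the only point needing a word of care is that all series of squared $L^2(\Gamma)$-norms occurring are finite, either by the hypotheses or because $v\in\mathcal{H}\hookrightarrow L^2(\Gamma\times\T_T)$ by \eqref{spec}.
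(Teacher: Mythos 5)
Your proof of part (i) and of the final convergence statement is essentially the paper's argument: the same Fourier diagonalisation of $D_h$, the same elementary bound $|e^{i\theta}-1|\le|\theta|$, and the same dominated convergence over $k$ for $\|D_hv-\partial_tv\|_{L^2}\to 0$. Where you genuinely diverge is part (ii). The paper extracts a weakly convergent subsequence $D_{h_n}v\rightharpoonup w$ in $L^2(\Gamma\times\T_T)$ and identifies $w=\partial_t v$ by passing to the limit in the discrete integration-by-parts identity $\int\!\!\int D_{h_n}v\,\phi = -\int\!\!\int v\,D_{-h_n}\phi$ against test functions; this is the classical difference-quotient argument and yields the weak derivative directly, with no appeal to the Fourier structure. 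You instead apply Fatou's lemma (along any sequence $h_n\to0$, with counting measure in $k$) to the Plancherel expression $T\sum_k|\mu_k(h)|^2\|v_k\|_{L^2(\Gamma)}^2$, concluding $T\sum_k\omega^2k^2\|v_k\|_{L^2(\Gamma)}^2<\infty$. That is correct and arguably more elementary here, since the explicit time-Fourier expansion is available; the one step you leave implicit is the identification that finiteness of this series means the distributional time derivative of $v$ is the $L^2$ function $\sum_k i\omega k\,v_k e^{i\omega kt}$ (i.e.\ that your Plancherel formula for $\|\partial_tv\|_{L^2}$ ``whenever the right-hand side is finite'' really characterises $\partial_tv\in L^2$). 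This is routine — the partial sums converge in $L^2$ and termwise differentiation passes to the limit in the sense of distributions — but it is exactly the point the paper's weak-compactness argument handles automatically, so a sentence making it explicit would close the only small gap between your write-up and a complete proof.
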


\begin{proof} The estimate in (i) follows from $|e^{is}-1|=2|\sin\frac{s}{2}|\leq |s|$ and thus
\begin{align*}
\|D_h v\|_{L^2(\Gamma\times\T_T)}^2 & = \sum_{k\in \kappa\Zodd} \|v_k\|_{L^2(\Gamma)}^2 \int_0^T \left|\frac{e^{ikw(t+h)}-e^{ikwt}}{h}\right|^2\,dt \\
& \leq \sum_{k\in \kappa\Zodd} \|v_k\|_{L^2(\Gamma)}^2 Tk^2\omega^2 = \|\partial_t v\|_{L^2(\Gamma\times\T_T)}^2.
\end{align*}
Moreover, by dominated convergence we find that
\begin{align*} 
\|D_h v-\partial_t v\|_{L^2(\Gamma\times\T_T)}^2 & = \sum_{k\in \kappa\Zodd} \|v_k\|_{L^2(\Gamma)}^2 \int_0^T \left|\frac{e^{ik\omega(t+h)}-e^{ik\omega t}}{h}-ik\omega e^{ik\omega t}\right|^2\,dt \\
&= \sum_{k\in \kappa\Zodd} \|v_k\|_{L^2(\Gamma)}^2 \int_0^T \left|\frac{e^{ik\omega h}-1}{h}-ik\omega \right|^2\,dt \to 0
\end{align*}
as $h\to 0$. In case (ii) there is a weakly convergent subsequence $D_{h_n} v\rightharpoonup w$ in $L^2(\Gamma\times\T_T)$ as $n\to \infty$. Starting from the identity for $\phi\in H^1_c(\Gamma\times\T_T)$
$$
\int_\Gamma \int_0^T D_{h_n} v \phi \,dt\,dx = -\int_\Gamma \int_0^T v D_{-h_n} \phi \,dt\,dx
$$
and taking the limit $n\to \infty$ we get 
$$
\int_\Gamma \int_0^T w \phi \,dt\,dx = -\int_\Gamma \int_0^T v \partial_t \phi \,dt\,dx,
$$
i.e., $\partial_t v$ exists and $=w\in L^2(\Gamma\times\T_T)$. The convergence result then follows from (i).
\end{proof}

The previous lemma will now be applied to weak solutions of the nonlinear wave equation \eqref{waveeq} in the following way.

\begin{lem} Let $u\in\mathcal{H}$ be a weak solution of \eqref{waveeq} and let $w:= |u|^{p-1} u$. Then 
\begin{align*} 
& \sup_{h>0} \|D_h w\|_{\mathcal{H}'}<\infty,  && \sup_{h>0} \|D_h w\|_{L^2(\Gamma\times\T_T)}<\infty,\\
& \sup_{h>0} \|D_h u\|_{\mathcal H}<\infty, &&  \sup_{h>0} \|\partial_t D_h u\|_{L^2(\Gamma\times\T_T)}<\infty,\\
& \sup_{h>0} \|\partial_x D_h u\|_{L^2(\Gamma\times\T_T)}<\infty
\end{align*}
which implies $\partial_t^2,\partial_x\partial_t u\in L^2(\Gamma\times\T_T)$.
\end{lem}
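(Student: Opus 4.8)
The plan is to differentiate the weak equation in time by means of the difference quotient $D_h$ and then bootstrap, starting from the weakest of the five bounds. Write $g(s):=|s|^{p-1}s$, so that $w=g(u)$; by Lemma~\ref{nonlinear_reg1} and Definition~\ref{def_weak_sol} the fact that $u\in\mathcal{H}$ solves \eqref{waveeq} weakly reads $\sum_{k\in\kappa\Zodd}b_{L_k}(u_k,\phi_k)=\pm\langle w,\phi\rangle$ for all $\phi\in\mathcal{H}$, where $\langle w,\phi\rangle=\int_{\Gamma\times\T_T}w\overline{\phi}\,d(x,t)$ and the sign is the one in \eqref{waveeq} and plays no role below. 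Testing with the backward quotient $D_{-h}\phi\in\mathcal{H}$ and using the periodic summation-by-parts identity $\langle D_hw,\phi\rangle=-\langle w,D_{-h}\phi\rangle$ together with the Fourier-multiplier form $(D_{-h}\phi)_k=\frac{1-e^{-ik\omega h}}{h}\phi_k$ and $(D_hu)_k=\frac{e^{ik\omega h}-1}{h}u_k$, one checks that $D_hu\in\mathcal{H}$ is a weak solution of $L(D_hu)=\pm D_hw$ in the sense of Lemma~\ref{lin_reg1}. Hence, as soon as $D_hw\in\mathcal{H}'$, that lemma yields the isometry $\|D_hu\|_{\mathcal{H}}=\|D_hw\|_{\mathcal{H}'}$, and if moreover $D_hw\in L^2(\Gamma\times\T_T)_{\text{symm}}$ it yields in addition $D_hu\in H^1_c(\Gamma\times\T_T)_{\text{symm}}$ with $\|D_hu\|_{H^1(\Gamma\times\T_T)}\le C\|D_hw\|_{L^2(\Gamma\times\T_T)}$. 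Thus all five bounds reduce to uniform-in-$h$ control of $D_hw$, first in $\mathcal{H}'$ and then in $L^2$.

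The key nonlinear input is the pointwise estimate $|D_hw(x,t)|\le p\,M_h(x,t)^{p-1}|D_hu(x,t)|$ with $M_h(x,t):=\max\{|u(x,t+h)|,|u(x,t)|\}$, which comes from $|g(a)-g(b)|\le p\max\{|a|,|b|\}^{p-1}|a-b|$. By Lemma~\ref{specific_values} and Theorem~\ref{hlp} one has $\mathcal{H}\hookrightarrow L^r(\Gamma\times\T_T)$ for all $r\in[2,\infty)$, so $u\in L^r$ for all such $r$, and by translation invariance in $t$ this gives $\sup_h\|M_h\|_{L^r(\Gamma\times\T_T)}<\infty$ for every $r\in[2,\infty)$. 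On the other hand $u\in H^1_c(\Gamma\times\T_T)_{\text{symm}}$ by Lemma~\ref{nonlinear_reg1}, so $\partial_tu\in L^2(\Gamma\times\T_T)$ and Lemma~\ref{differenz}(i) gives the elementary uniform bound $\|D_hu\|_{L^2(\Gamma\times\T_T)}\le\|\partial_tu\|_{L^2(\Gamma\times\T_T)}$. Fixing $q'\in(1,2)$ close enough to $2$ and applying Hölder's inequality to $|D_hw|^{q'}\le p^{q'}M_h^{(p-1)q'}|D_hu|^{q'}$ with the conjugate pair $\bigl(\tfrac{2}{q'},\tfrac{2}{2-q'}\bigr)$ then yields $\sup_h\|D_hw\|_{L^{q'}(\Gamma\times\T_T)}\le p\,\sup_h\|M_h\|_{L^{2(p-1)q'/(2-q')}}^{\,p-1}\,\|\partial_tu\|_{L^2}<\infty$. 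Since $\mathcal{H}\hookrightarrow L^{q}(\Gamma\times\T_T)$ for $q=q'/(q'-1)\in[2,\infty)$, duality gives $L^{q'}(\Gamma\times\T_T)\hookrightarrow\mathcal{H}'$, hence $\sup_h\|D_hw\|_{\mathcal{H}'}<\infty$, and by the isometry also $\sup_h\|D_hu\|_{\mathcal{H}}<\infty$.

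Now bootstrap: with $\sup_h\|D_hu\|_{\mathcal{H}}<\infty$ known, Theorem~\ref{hlp} upgrades it to $\sup_h\|D_hu\|_{L^b(\Gamma\times\T_T)}<\infty$ for \emph{every} $b\in[2,\infty)$. Plugging this improved integrability of $D_hu$ back into the pointwise estimate and applying Hölder once more with exponents $\tfrac1a+\tfrac1b=\tfrac12$, $b<\infty$, together with $\sup_h\|M_h\|_{L^{a(p-1)}}<\infty$, gives $\sup_h\|D_hw\|_{L^2(\Gamma\times\T_T)}<\infty$. Feeding this into Lemma~\ref{lin_reg1} with $f=\pm D_hw\in L^2(\Gamma\times\T_T)_{\text{symm}}$ produces $\sup_h\bigl(\|\partial_tD_hu\|_{L^2(\Gamma\times\T_T)}+\|\partial_xD_hu\|_{L^2(\Gamma\times\T_T)}\bigr)<\infty$, completing the list of five bounds. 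For the final assertion, note $\partial_tu,\partial_xu\in L^2(\Gamma\times\T_T)$ (from $u\in H^1_c$) and $\partial_tD_hu=D_h\partial_tu$, $\partial_xD_hu=D_h\partial_xu$; hence the converse (difference-quotient) part of Lemma~\ref{differenz}, whose proof uses only $L^2$-membership of the differentiated function, applied to $\partial_tu$ and to $\partial_xu$, gives $\partial_t^2u,\,\partial_x\partial_tu\in L^2(\Gamma\times\T_T)$.

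The main obstacle is the circularity hidden in the crude bound $|D_hw|\le pM_h^{p-1}|D_hu|$: to turn it into a uniform $L^2$-estimate for $D_hw$ one needs $D_hu$ controlled in some $L^b$ with $b>2$, which only becomes available once $\sup_h\|D_hu\|_{\mathcal{H}}<\infty$ is established, i.e. one of the very bounds being sought. The way out is precisely the improved embedding range of Section~\ref{sec:functional_framework}: measuring $D_hw$ first in the \emph{weaker} norm of $\mathcal{H}'\supseteq L^{q'}$ with $q'<2$ requires only the trivial bound $\|D_hu\|_{L^2}\le\|\partial_tu\|_{L^2}$ of Lemma~\ref{differenz}(i), after which the $L^\infty$-type embedding drives the bootstrap; without the full range $q\in[2,\infty)$ of embeddings this step would fail. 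A secondary technical point is justifying carefully that $D_hu$ solves the difference-quotiented linear equation in the precise sense demanded by Lemma~\ref{lin_reg1}; this is where the $t$-periodicity and the $x$-translation invariance of the operators $L_k$ enter, and it is also what legitimises passing from the abstract bounds to $\partial_t^2u,\partial_x\partial_tu\in L^2$.
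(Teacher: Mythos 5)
Your proposal is correct and follows essentially the same route as the paper: the pointwise bound on the difference quotient of the nonlinearity, a first Hölder estimate in $L^{q'}$ with $q'<2$ combined with $\|D_h u\|_{L^2}\le\|\partial_t u\|_{L^2}$ to get the $\mathcal{H}'$-bound, Lemma~\ref{lin_reg1} to transfer it to $\sup_h\|D_h u\|_{\mathcal H}$, a second Hölder step (your flexible exponent choice replaces the paper's explicit $p<2$ versus $p\ge2$ case distinction) to reach $\sup_h\|D_h w\|_{L^2}$, and finally Lemma~\ref{lin_reg1} and the difference-quotient characterization of Lemma~\ref{differenz} to conclude $\partial_t^2 u,\partial_x\partial_t u\in L^2(\Gamma\times\T_T)$. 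Your explicit justifications of the duality embedding $L^{q'}\hookrightarrow\mathcal{H}'$ and of why Lemma~\ref{differenz}(ii) applies to $\partial_x u$ are points the paper leaves implicit, and they are correct.
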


\begin{proof} Note that $D_h w(x,t) = g_h(x,t) D_h u(x,t)$ where 
$$
g_h(x,t) = \frac{|u(x,t+h)|^{p-1} u(x,t+h)- |u(x,t)|^{p-1}u(x,t)}{u(x,t+h)-u(x,t)}.
$$
By using the inequality $\bigl||s|^{p-1}s-|t|^{p-1}t\bigr|\leq p|\xi|^{p-1}|s-t|\leq p(|s|^{p-1}+|t|^{p-1})|s-t|$ for $s,t\in\R$ and $\xi$ between $s$ and $t$, we see that
$$
|g_h(x,t)| \leq p( |u(x,t+h)|^{p-1} + |u(x,t)|^{p-1}).
$$
Moreover, $D_h u\in\mathcal{H}\cap H^1_c(\Gamma\times\T_T)$ is a weak solution of $L D_h u = \pm D_h w$ in the sense of Lemma~\ref{lin_reg1}. Let $r>2$ be such that $(p-1)r \geq 2$. Then with $q:=(\frac{1}{r}+\frac{1}{2})^{-1}\in (1,2)$ and Lemma~\ref{differenz}(i) we find 
\begin{align*}
\| D_h w\|_{L^q(\Gamma\times\T_T)} & \leq \|g_h\|_{L^r(\Gamma\times\T_T)} \|D_h u\|_{L^2(\Gamma\times\T_T)} \\
& \leq 2p\|u\|_{L^{(p-1)r}(\Gamma\times\T_T)}^{p-1} \|D_h u\|_{L^2(\Gamma\times\T_T)} \\
& \leq 2p\|u\|_{L^{(p-1)r}(\Gamma\times\T_T)}^{p-1} \|\partial_t u\|_{L^2(\Gamma\times\T_T)}
\end{align*}
and hence $\sup_{h>0} \|D_h w\|_{\mathcal{H}'}<\infty$. With the help of the first statement in Lemma~\ref{lin_reg1} we see that $\sup_{h>0} \|D_h u\|_{\mathcal H}= \sup_{h>0} \|D_h w\|_{\mathcal{H}'}<\infty$. Now we apply H\"older's inequality one more time with $\frac{1}{t}+\frac{1}{t'}=1$, $t>1$ and get 
\begin{align*}
\| D_h w\|_{L^2(\Gamma\times\T_T)} & \leq \|g_h\|_{L^{2t'}(\Gamma\times\T_T)} \|D_h u\|_{L^{2t}(\Gamma\times\T_T)} \\
& \leq 2p\|u\|_{L^{(p-1)2t'}(\Gamma\times\T_T)}^{p-1} \|D_h u\|_{L^{2t}(\Gamma\times\T_T)}.
\end{align*}
In case $1<p<2$ we choose $t'=\frac{1}{p-1}\in (1,\infty)$ and $t=\frac{1}{2-p}\in (1,\infty)$ and get 
\begin{align*}
\sup_{h>0} \| D_h w\|_{L^2(\Gamma\times\T_T)} & \leq 2p\|u\|_{L^2(\Gamma\times\T_T)}^{p-1} \sup_{h>0} \|D_h u\|_{L^{2t}(\Gamma\times\T_T)}\\
& \leq 2pC\|u\|_{L^2(\Gamma\times\T_T)}^{p-1} \sup_{h>0}\|D_h u\|_{\mathcal{H}}<\infty.
\end{align*}
In the case $p\geq 2$ we use $t,t'>1$, $(p-1)2t'>(p-1)2\geq 2$ to get 
\begin{align*}
\sup_{h>0} \| D_h w\|_{L^2(\Gamma\times\T_T)} & \leq 2p\|u\|_{L^{(p-1)2t'}(\Gamma\times\T_T)}^{p-1} \sup_{h>0} \|D_h u\|_{L^{2t}(\Gamma\times\T_T)}\\
& \leq 2pC\|u\|_{\mathcal{H}}^{p-1} \sup_{h>0}\|D_h u\|_{\mathcal{H}}<\infty.
\end{align*}
In both cases we have established $\sup_{h>0} \|D_h w\|_{L^2(\Gamma\times\T_T)}<\infty$. Since $D_hu$ is a solution of $LD_h u =\pm D_h w$ we can use Lemma~\ref{lin_reg1} and get $\sup_{h>0} \|\partial_x D_h u\|_{L^2(\Gamma\times\T_T)}<\infty$ as well as $\sup_{h>0} \|\partial_t D_h u\|_{L^2(\Gamma\times\T_T)}<\infty$. And since $D_h$ commutes with $\partial_x, \partial_t$ we get by Lemma~\ref{differenz} that $\partial_t^2 u, \partial_x\partial_t u \in L^2(\Gamma\times\T_T)$ as claimed. 
\end{proof}

The final piece of the regularity statement of Theorem~\ref{main} is given in the next lemma.

\begin{lem} \label{nonlinear_reg2} Let $u\in\mathcal{H}$ be a weak solution of \eqref{waveeq}. Then in addition to $\partial_x u, \partial_t u, \partial_t^2 u, \partial_x\partial_t u$ also $\partial_x^2 u \in L^2(\Gamma\times\T_T)$. Moreover, the solutions satisfy \eqref{waveeq} pointwise a.e. on $\Gamma\times\T_T$ and the Kirchhoff conditions \eqref{kirchhoff_abl} holds at the vertices for almost all $t\in\T_T$.
\end{lem}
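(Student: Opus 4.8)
The idea is to exploit the time regularity already in hand to recast the equation in the form $-\partial_x^2u = g$ with a right-hand side $g$ that we can see lies in $L^2$, and then to extract both the missing $x$-regularity and the Kirchhoff conditions directly from the weak formulation; no further difference-quotient argument is needed.

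\emph{Step 1 (the right-hand side is $L^2$).} Set $g := \partial_t^2u + \alpha u \mp |u|^{p-1}u$. By Lemma~\ref{specific_values} and Theorem~\ref{hlp} we have $u\in L^q(\Gamma\times\T_T)$ for every $q\in[2,\infty)$, hence $|u|^{p-1}u\in L^2(\Gamma\times\T_T)$; by the preceding lemma $\partial_t^2u\in L^2(\Gamma\times\T_T)$; and $u\in L^2$. Therefore $g\in L^2(\Gamma\times\T_T)$.

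\emph{Step 2 (integrate by parts in $t$, then in $x$ edge-wise).} Starting from Definition~\ref{def_weak_sol} and using $\partial_t^2u\in L^2(\Gamma\times\T_T)$ together with time-periodicity, one moves the $t$-derivative off the test function to get
$$
\int_{\Gamma\times\T_T}\bigl(\partial_xu\,\partial_x\phi + g\,\phi\bigr)\,d(x,t) = 0 \qquad\text{for all }\phi\in H^1_c(\Gamma\times\T_T),
$$
the antisymmetric part of $\phi$ contributing nothing by the symmetry of $u$. Testing with $\phi\in C^\infty_c(\Gamma_{n,j}\times\T_T)$ (i.e.\ supported in the interior of a single edge) yields $\partial_x^2u_{j,n} = -g_{j,n}$ in $\mathcal D'(I_{j,n}\times\T_T)$, hence $\partial_x^2u_{j,n}\in L^2(I_{j,n}\times\T_T)$ and $\|\partial_x^2u\|_{L^2(\Gamma\times\T_T)} = \|g\|_{L^2(\Gamma\times\T_T)}$. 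Combined with $\partial_xu,\partial_tu,\partial_t^2u,\partial_x\partial_tu\in L^2$ this gives $u_{j,n}\in H^2(I_{j,n}\times\T_T)$ for every edge and, after summation, $u\in H^2(\Gamma\times\T_T)$. In particular $\partial_t^2u - \partial_x^2u + \alpha u = \pm|u|^{p-1}u$ holds pointwise a.e.\ on $\Gamma\times\T_T$.

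\emph{Step 3 (Kirchhoff conditions at the vertices).} Now that each $u_{j,n}$ is an $H^2$-function, one may integrate by parts edge-wise in the displayed identity of Step 2 with a \emph{general} $\phi\in H^1_c(\Gamma\times\T_T)$; the interior terms cancel because $-\partial_x^2u = g$, so only the vertex boundary terms remain. At a vertex $P$ with adjoining edges $\Gamma_{n,0},\Gamma_{n,+},\Gamma_{n,-}$ this contribution equals $\bigl(\partial_xu_0(P,\cdot)-\partial_xu_+(P,\cdot)-\partial_xu_-(P,\cdot)\bigr)\phi(P,\cdot)$, using that $\phi$ has a single common trace at $P$ and the orientation of the edges; here the boundary traces of $\partial_xu_{j,n}$ make sense in $L^2(\T_T)$ since $\partial_xu_{j,n}\in H^1(I_{j,n}\times\T_T)$. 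Choosing $\phi(x,t)=\psi(x)\chi(t)$ with $\psi\in H^1_c(\Gamma)$ supported near $P$ and $\chi$ ranging over a dense subset of $L^2(\T_T)$ forces $\partial_xu_0(P,\cdot)=\partial_xu_+(P,\cdot)+\partial_xu_-(P,\cdot)$ in $L^2(\T_T)$, i.e.\ \eqref{kirchhoff_abl_time} holds, so that \eqref{kirchhoff_abl} is satisfied for almost every $t\in\T_T$. The continuity conditions \eqref{kirchhoff_cont} hold for a.e.\ $t$ because $u\in H^1_c(\Gamma\times\T_T)$ and, by Step 2, $u(\cdot,t)$ is continuous on each edge for a.e.\ $t$.

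\emph{Main obstacle.} The genuinely hard analytic work — the time regularity obtained via the method of differences — is already behind us, so the remaining subtlety is organizational: passing from the globally coupled weak formulation to edge-wise $H^2$-regularity while correctly recovering the vertex (Kirchhoff) conditions. The delicate point is Step 3, where one must localize the test functions near a vertex and handle the traces of $\partial_x u$ carefully; separability of $L^2(\T_T)$ is what converts "orthogonal to a dense set of $\chi$" into an identity valid for almost every $t$.
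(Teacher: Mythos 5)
Your proposal is correct and follows essentially the same route as the paper: use the previously established $\partial_t^2u\in L^2(\Gamma\times\T_T)$ together with the $L^q$-embeddings to see that the right-hand side is in $L^2$, deduce $\partial_x^2u\in L^2$ edge-wise from the weak formulation, and then integrate by parts edge-wise with test functions localized at a vertex to recover the pointwise equation and the Kirchhoff conditions for almost every $t$; the paper's additional preliminary observation that each Fourier mode $u_k$ lies in $\mathcal{D}(\Delta)$ is bypassed by you without harm. The only flaw is a consistent sign slip: with your $g=\partial_t^2u+\alpha u\mp|u|^{p-1}u$, the identity $\int_{\Gamma\times\T_T}(\partial_xu\,\partial_x\phi+g\phi)\,d(x,t)=0$ yields $\partial_x^2u_{j,n}=+g_{j,n}$ (not $-g_{j,n}$), and it is precisely this sign that makes the interior terms cancel in Step 3 and produces the correct pointwise equation $\partial_t^2u-\partial_x^2u+\alpha u=\pm|u|^{p-1}u$ that you state.
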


\begin{proof} First we note that with $w:= |u|^{p-1}u$ we have that $b_{L_k}(u_k,\phi_k) = \int_\Gamma \pm w_k \phi_k\,dx$ for all $k\in \kappa\Zodd$ and all $\phi\in\mathcal{H}$. In particular 
$$
\int_\Gamma u_k' \phi_k' \,dx = \int_\Gamma \left((\omega^2 k^2-\alpha)u_k \pm w_k\right)\phi_k \mbox{ for all } \phi\in \mathcal{H}
$$
which shows that $u_k\in \mathcal{D}(\Delta)= \{v\in H^2(\Gamma)\cap H^1_c(\Gamma)_{\text{symm}}: v \mbox{ satisfies } \eqref{kirchhoff_abl}\}$. Moreover 
$$
\int_{\Gamma\times\T_T} \partial_x u\partial_x \phi \,d(x,t) = \int_{\Gamma\times\T_T}(\underbrace{\pm |u|^{p-1}u -\alpha u -\partial_t^2 u}_{\in L^2(\Gamma\times\T_T)})\phi\,d(x,t) 
$$
for all $\phi\in \mathcal{H}\cap H^1_c(\Gamma\times\T_T)$ implies that $\partial_x^2 u\in L^2(\Gamma\times\T_T)$ and that 
\begin{align*}
\sum_{\substack{n \\ j\in\{\pm\}}} &\int_0^T \int_{\pi(2n+1)}^{2\pi (n+1)} -\partial_x^2 u^j\phi^j\,d(x,t) +\sum_{\substack{n \\ j\in\{\pm\}}} \int_0^T \partial_x u^j \phi^j\Big|_{\pi(2n+1)}^{2\pi(n+1)}\,dt \\
+ \sum_n & \int_0^T \int_{2\pi n}^{\pi(2n+1)} -\partial_x^2 u^0\phi^0\,d(x,t) +\sum_n \int_0^T \partial_x u^0 \phi^0\Big|_{2\pi n}^{\pi(2n+1)}\,dt \\
& = \int_{\Gamma\times\T_T} \partial_x u \partial_t \phi \,d(x,t) = \int_{\Gamma\times\T_T}(\pm |u|^{p-1}u -\alpha u -\partial_t^2 u)\phi\,d(x,t). 
\end{align*}
From this we deduce that \eqref{waveeq} holds pointwise a.e. in $\Gamma\times\T_T$ as well as the Kirchhoff conditions
\begin{align*}
\partial_x u^+(2\pi n-,t)+ \partial_x u^-(2\pi n-,t) &= \partial_x u^0(2\pi n+,t), \\
\partial_x u^+(\pi(2n+1)+,t)+\partial_x u^-(\pi(2n+1)+,t) &= \partial_x u^0(\pi(2n+1)-,t)
\end{align*}
for all $n\in \Z$ and almost all $t\in\T_T$ as claimed, cf. \eqref{kirchhoff_abl}. This finishes the proof of the lemma.
\end{proof}

Now we finish the proof of Theorem~\ref{prop_ivp}.

\begin{proof}[Proof of Theorem~\ref{prop_ivp}.]
By the regularity properties of Theorem~\ref{main} we already know that $u, \partial_t u, \partial_t^2 u, \partial_x u, \partial_x^2 u, \partial_x\partial_t u \in L^2(\Gamma\times\T_T)$ and hence $u\in L^2(\T_T; H_c^1(\Gamma)\cap H^2(\Gamma))$, $\partial_t^2 u \in L^2(\T_T;L^2(\Gamma))$ as claimed. Moreover, using Definition~\ref{def_weak_sol} with $\phi(x,t)= \varphi(x)(e^{ik\omega t}+ e^{-ik\omega t})$, $\varphi\in H^1_c(\Gamma)$ and setting $w(x,t)= |u(x,t)|^{p-1}u(x,t)$ we get 
$$
\int_\Gamma -k^2 \omega^2 u_k\varphi + u_k' \varphi' +\alpha u_k \varphi \,dx = \pm \int_\Gamma w_k\varphi \,dx
$$
Multiplication with $e^{ik\omega t}$ and summation over $k\in \kappa\Zodd$ implies
\begin{equation} \label{after_testing}
\sum_{k\in\kappa\Zodd} \int_\Gamma (-k^2 \omega^2 u_k e^{ik\omega t} \varphi + u_k' e^{ik\omega t} \varphi' +\alpha u_k e^{ik\omega t} \varphi) \,dx = \pm \sum_{k\in\kappa\Zodd} \int_\Gamma w_k e^{ik\omega t}\varphi \,dx.
\end{equation}
We may use that in the sense of $L^2(\Gamma\times\T_T)$-convergence we have the identities 
\begin{align*}
& \sum_{k\in\kappa\Zodd} -k^2 \omega^2 u_k(x) e^{ik\omega t} = \partial_t^2 u(x,t), && \sum_{k\in\kappa\Zodd} u_k'(x) e^{ik\omega t} = \partial_x u(x,t),\\
& \sum_{k\in\kappa\Zodd} \alpha u_k(x) e^{ik\omega t} = \alpha u(x,t), && \sum_{k\in\kappa\Zodd} w_k(x) e^{ik\omega t} = w(x,t)= |u(x,t)|^{p-1}u(x,t)
\end{align*}
which together with \eqref{after_testing} implies the claimed identity \eqref{ivp}. Also, since $u\in H^1_c(\Gamma\times\T_T)$ we have $u(0,\cdot)=\trace u(t,\cdot)|_{t=0}$. Likewise, since $\partial_t^2 u, \partial_t\partial_x u\in L^2(\Gamma\times\T_T)$ we have $\partial_t u \in H^1_c(\Gamma\times\T_T)$ so that also $\partial_t u(0,\cdot)=\trace \partial_t u(t,\cdot)|_{t=0}$. Using that $\sum_{k\in\kappa\Zodd} \int_\Gamma  u_k''(x) e^{ik\omega t}\,dx = \partial_x^2 u(x,t)$ in the $L^2$-sense we may integrate edge-wise by parts with respect to $x$ in \eqref{after_testing} and obtain (as in the proof of Lemma~\ref{nonlinear_reg2}) the Kirchhoff conditions \eqref{kirchhoff_abl}. 
\end{proof}

\section*{Appendix}

Here we give the proof of the concentration compactness result of Lemma~\ref{ConccompLemma}. The proof differs rather largely from the usual proof in the case where Sobolev spaces are involved. Sobolev spaces with Hilbert space structure have a locality property: functions with disjoint support are orthogonal. A similar statement, i.e., $\|\sum_k \phi_k\|_{W^{m,p}}^p = \sum_k \|\phi_k\|_{W^{m,p}}^p$ if $\supp \phi_j\cap\supp \phi_i=\emptyset$ for $i\not =j$, also holds for Sobolev spaces $W^{m,p}$, $m\in \N$, $p\geq 1$ with only a Banach space structure. The locality property is used heavily in the proof of the concentration compactness principle - but our Hilbert space $\mathcal{H}$ does not seem to have the locality property since it is based on the nonlocal Bloch transform. Therefore we introduce a new Hilbert space $H$, which has the advantage of having the locality property but the disadvantage of embedding only in $L^q(\Gamma\times\T_T)$ for $2\leq q<4$. Together with the better embedding properties of $\mathcal{H}$ this is, however, sufficient for the proof of the concentration compactness principle.

\begin{definition} Define the Hilbert space $H$ by 
\begin{equation}
H=\{u(x,t)=\sum_{k\in\kappa\Zodd} u_k(x) e^{ik\omega t}: u_k\in H^1_c(\Gamma)_{\text{symm}}, \overline{u_k}=u_{-k} \forall k\in\kappa\Zodd \mbox{ with } \|u\|_H<\infty\}
\end{equation}
and where the norm is given by
$$
\|u\|_H^2 = \sum_{k\in\kappa\Zodd} \frac{1}{|k|}\|u_k'\|_{L^2(\Gamma)}^2+ |k|\|u_k\|_{L^2(\Gamma)}^2.
$$
\end{definition}

\begin{lem} \label{embedding_suboptimal} The Hilbert space $H$ has the following properties: $\mathcal{H}$ embeds continuously into $H$ and $H$ embeds continuously in $L^q(\Gamma\times\T_T)$ for $2\leq q<4$. 
\end{lem}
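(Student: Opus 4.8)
Since elements of both $\mathcal H$ and $H$ are of the form $u=\sum_{k\in\kappa\Zodd}u_ke^{ik\omega t}$ with $u_k\in H^1_c(\Gamma)_{\text{symm}}$ and $\overline{u_k}=u_{-k}$, in each case it suffices to establish the corresponding norm inequality, and for this I would pass to the Floquet--Bloch--Fourier coefficients $(\tilde u_k)_m(l)$. For $\mathcal H\hookrightarrow H$, Plancherel's formula \eqref{plan_eig} applied to each $u_k$ gives $\|u_k\|_{L^2(\Gamma)}^2=\int_{-1/2}^{1/2}\sum_m|(\tilde u_k)_m(l)|^2\,dl$ and $\|u_k'\|_{L^2(\Gamma)}^2=\int_{-1/2}^{1/2}\sum_m|(\tilde u_k)_m(l)|^2\lambda_m(l)\,dl$, hence
\[
\|u\|_H^2=\int_{-1/2}^{1/2}\sum_{m\in\Z}\sum_{k\in\kappa\Zodd}|(\tilde u_k)_m(l)|^2\Bigl(\tfrac{\lambda_m(l)}{|k|}+|k|\Bigr)\,dl ,
\]
while $\|u\|_{\mathcal H}^2$ is the same expression with weight $|\lambda_m(l)-\omega^2k^2+\alpha|$. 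So it is enough to prove the pointwise bound $\tfrac{\lambda_m(l)}{|k|}+|k|\le C\,|\lambda_m(l)-\omega^2k^2+\alpha|$ for all admissible $m,k,l$. The key input is the \emph{square-root gap} $\delta>0$ of \eqref{sufficient_condition}, which holds for all $k\in\kappa\Zodd$ by the proof of Lemma~\ref{specific_values}: factoring $\lambda_m(l)-\omega^2k^2+\alpha=(\sqrt{\lambda_m(l)}-\sqrt{\omega^2k^2-\alpha})(\sqrt{\lambda_m(l)}+\sqrt{\omega^2k^2-\alpha})$ and using $\sqrt{\omega^2k^2-\alpha}\ge c|k|$ (as $\omega^2k^2-\alpha\ge\tfrac12\omega^2k^2$ once $\kappa$ is large) yields $|\lambda_m(l)-\omega^2k^2+\alpha|\ge\delta(c|k|+\sqrt{\lambda_m(l)})$. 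This controls the term $|k|$ directly; for $\tfrac{\lambda_m(l)}{|k|}$ I would distinguish $\lambda_m(l)\le2\omega^2k^2$ (then $\tfrac{\lambda_m(l)}{|k|}\le2\omega^2|k|$, absorbed by the above) from $\lambda_m(l)>2\omega^2k^2$ (then $|\lambda_m(l)-\omega^2k^2+\alpha|\ge\tfrac12\lambda_m(l)$ and $|k|\ge\kappa$ give $\tfrac{\lambda_m(l)}{|k|}\le\tfrac2\kappa|\lambda_m(l)-\omega^2k^2+\alpha|$).

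For $H\hookrightarrow L^q(\Gamma\times\T_T)$ with $2\le q<4$, I would first prove a Gagliardo--Nirenberg inequality on $\Gamma$, namely $\|f\|_{L^q(\Gamma)}\le C\|f\|_{L^2(\Gamma)}^{1-\theta}\|f\|_{H^1(\Gamma)}^{\theta}$ with $\theta=\tfrac12-\tfrac1q$, obtained from the one-dimensional Gagliardo--Nirenberg inequality on each edge (of fixed length $\pi$) by H\"older's inequality in the edge index with exponents $\tfrac1{1-\theta},\tfrac1\theta$ together with the inclusion $\ell^2\hookrightarrow\ell^q$. Applied to $u_k$, and combined with $\|u_k\|_{L^2(\Gamma)}^2\le c_k/|k|$ and $\|u_k\|_{H^1(\Gamma)}^2\le2|k|c_k$, where $c_k:=\tfrac1{|k|}\|u_k'\|_{L^2(\Gamma)}^2+|k|\|u_k\|_{L^2(\Gamma)}^2$ and $\sum_kc_k=\|u\|_H^2$, this yields $\|u_k\|_{L^q(\Gamma)}\le Cc_k^{1/2}|k|^{-1/q}$. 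Then I would estimate the time integral by Hausdorff--Young, $\|u(x,\cdot)\|_{L^q(\T_T)}\le C\|(u_k(x))_k\|_{\ell^{q'}}$ with $q'=\tfrac q{q-1}\le2$, integrate in $x$, and apply Minkowski's inequality (valid since $q'\le q$) to get $\|u\|_{L^q(\Gamma\times\T_T)}\le C(\sum_k\|u_k\|_{L^q(\Gamma)}^{q'})^{1/q'}$; inserting the previous bound and using H\"older in $k$ with exponents $\tfrac2{q'},\tfrac2{2-q'}$ (the case $q=2$ being immediate from $|k|\ge\kappa$) leads to
\[
\|u\|_{L^q(\Gamma\times\T_T)}\le C\,\|u\|_H\Bigl(\sum_{k\in\kappa\Zodd}|k|^{-\frac{2q'}{q(2-q')}}\Bigr)^{\frac{2-q'}{2q'}},
\]
and an elementary computation shows the exponent $\frac{2q'}{q(2-q')}$ exceeds $1$ exactly when $q<4$, so the series converges.

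I expect the main obstacles to be: (a) deriving the graph Gagliardo--Nirenberg inequality with the sharp exponent $\theta=\tfrac12-\tfrac1q$ from its edge-wise counterpart, where the H\"older step over the edge index must be arranged so that no power of the (infinite) number of edges is lost; and (b) the exponent bookkeeping in the final summation over $k$, which is precisely where the range $q<4$ is forced, the decay $|k|^{-1/q}$ gained per Fourier mode sitting just on the borderline of summability after the H\"older step. Conceptually, Part~1 is the crux: it is the square-root gap $\delta$, rather than the weaker gap $\delta^\ast$, that produces $|\lambda_m(l)-\omega^2k^2+\alpha|\gtrsim|k|+\sqrt{\lambda_m(l)}$, which is what makes $\mathcal H$ embed continuously into $H$.
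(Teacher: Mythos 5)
Your proposal is correct, and while it reproduces the paper's overall two-step strategy (a weight comparison giving $\mathcal H\hookrightarrow H$, then Gagliardo--Nirenberg in $x$ plus H\"older in $k$, with the summability of $\sum_k|k|^{-2/(q-2)}$ forcing $q<4$), several of your technical steps genuinely differ from the paper's. For the first embedding the paper proves the two form inequalities \eqref{bilinear_form}, obtaining the gradient bound $b_{|L_k|}(v,v)\geq \tfrac{C}{|k|}\|v'\|_{L^2(\Gamma)}^2$ by splitting $v$ into its spectral parts in $H^1_c(\Gamma)_{\text{symm}}^\pm$ and manipulating the quadratic form; you instead prove the single pointwise weight inequality $\tfrac{\lambda_m(l)}{|k|}+|k|\leq C\,|\lambda_m(l)-\omega^2k^2+\alpha|$ via the square-root gap and the dichotomy $\lambda_m(l)\lessgtr 2\omega^2k^2$, which is equivalent but more direct; your appeal to the proof of Lemma~\ref{specific_values} for the gap at all $k\in\kappa\Zodd$ (rather than to \eqref{sufficient_condition}, stated only for $|k|>K$) is legitimate, and the finitely many remaining $k$ could anyway be handled by \eqref{spec} as the paper does. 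For the second embedding the paper introduces the auxiliary space $X_q$ and gets $X_q\hookrightarrow L^q(\Gamma\times\T_T)$ by Riesz--Thorin, then uses the homogeneous graph Gagliardo--Nirenberg inequality \eqref{gn} (asserted without proof) and a triple H\"older inequality in $k$; you reach the same reduction $\|u\|_{L^q(\Gamma\times\T_T)}\lesssim\bigl(\sum_k\|u_k\|_{L^q(\Gamma)}^{q'}\bigr)^{1/q'}$ by Hausdorff--Young plus Minkowski, derive an inhomogeneous graph Gagliardo--Nirenberg inequality from the edge-wise one (your H\"older step over the edge index combined with $\ell^2\hookrightarrow\ell^q$ does work and loses nothing on the infinite edge set), and conclude with a single H\"older step in $k$; since $\tfrac{2q'}{q(2-q')}=\tfrac{2}{q-2}$, your threshold coincides exactly with the paper's. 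In short, your Part~2 is somewhat more self-contained (no unproved homogeneous GN on the graph), and your Part~1 is a cleaner Bloch-variable argument, at the mild price of quoting constants established inside the proof of Lemma~\ref{specific_values}.
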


\begin{proof} The first embedding statement follows once we have verified the two inequalities
\begin{equation} \label{bilinear_form}
b_{|L_k|}(v,v) \geq C |k| \|v\|_{L^2(\Gamma)}^2, \quad b_{|L_k|}(v,v) \geq \frac{C}{|k|} \|v'\|_{L^2(\Gamma)}^2 
\end{equation}
for all $v\in H^1_c(\Gamma)_{\text{symm}}$ and a constant $C>0$ which is independent on $v$. For the first inequality we use \eqref{sufficient_condition} from Lemma~\ref{sufficient_for_embedding} and get
$$
|\lambda_m(l)-\omega^2 k^2 +\alpha| = \left|\sqrt{\lambda_m(l)}-\sqrt{\omega^2 k^2-\alpha}\right|\cdot\left|\sqrt{\lambda_m(l)}+\sqrt{\omega^2 k^2-\alpha}\right| \geq \delta C|k|
$$
if $|k|$ is sufficiently large. Then, the first inequality in \eqref{bilinear_form} follows from this estimate using the expression of $b_{|L_k|}$ in terms of the Bloch variables. For the remaining small values of $|k|$ the first inequality in \eqref{bilinear_form} follows from \eqref{spec}. For the second inequality in \eqref{bilinear_form} let us first consider small values of $|k|$. Then the second inequality in \eqref{bilinear_form} is just a consequence of the fact that due to \eqref{spec} the bilinear form $b_{|L_k|}$ generates a norm which is equivalent to the $H^1_c$-norm of the domain of $b_{|L_k|}$. Now we consider the second inequality in \eqref{bilinear_form} for large values of $|k|$. We first take $v\in H^1_c(\Gamma)_{\text{symm}}^+$. Then, by the first inequality in \eqref{bilinear_form}, we have
$$
\int_\Gamma |v'|^2-(\omega^2 k^2-\alpha)|v|^2\,dx \geq c|k|\int_\Gamma |v|^2\,dx \geq \frac{\tilde C}{|k|-\tilde C}(\omega^2 k^2-\alpha) \int_\Gamma|v|^2\,dx
$$
for $\tilde C>0$ small enough and $|k|$ sufficiently large. Hence we find for $|k|$ large 
$$
\int_\Gamma \frac{|k|-\tilde C}{|k|} |v'|^2 - (\omega^2 k^2-\alpha)|v|^2\,dx \geq 0 
$$
from which the second inequality in \eqref{bilinear_form} follows.  Next we take $v\in H^1_c(\Gamma)_{\text{symm}}^-$. In this case $\int_\Gamma |v'|^2-(\omega^2k^2-\alpha)|v|^2\,dx\leq 0$ and hence $\|v'\|_{L^2(\Gamma)} \leq \omega |k| \|v\|_{L^2(\Gamma)}$. Thus, in this case, the second inequality in \eqref{bilinear_form} directly follows from the first. 

Now we turn to the embedding of $H$ into $L^q(\Gamma\times\T_T)$ spaces. For $q\in [2,\infty]$ and $q'=\frac{q}{q-1}$ being the conjugate exponent, let us define the space 
$$
X_q = \{ u=\sum_{k\in\kappa\Zodd} u_k e^{ik\omega t}: \|u\|_{X_q}<\infty\}
$$
with the norm
$$
\|u\|_{X_q} := \left(\sum_{k\in\kappa\Zodd} \|u_k\|_{L^q(\Gamma)}^{q'}\right)^{1/q'}.
$$
To see the relation between $L^q(\Gamma\times\T)$ and $X_q$ we observe that $\|u\|_{L^2(\Gamma\times\T_T)}=\|u\|_{X_2}$ and $\|u\|_{L^\infty(\Gamma\times\T_T)}\leq \|u\|_{X_\infty}$. Hence the Riesz-Thorin Theorem implies that $X_q$ embeds into $L^q(\Gamma\times\T_T)$ for all $2\leq q\leq \infty$. To complete our embedding statement for $H$ we need to see that it embeds into $X_q$ for $2\leq q<4$. This will be done next. For $q=2$ the embedding is clear. For $2<q<4$ we use the Gagliardo-Nirenberg inequality
\begin{equation} \label{gn}
\|v\|_{L^q(\Gamma)} \leq C_{GN} \|v'\|_{L^2(\Gamma)}^\theta \|v\|_{L^2(\Gamma)}^{1-\theta}, \qquad v\in H^1_c(\Gamma)_{\text{symm}}
\end{equation}
with $\theta=\frac{1}{2}-\frac{1}{q}$. Next we use \eqref{gn} and estimate
\begin{align*}
\sum_{k\in\kappa\Zodd} \|u_k\|_{L^q(\Gamma)}^{q'} & \leq C_{GN}^{q'} \sum_{k\in\kappa\Zodd} \|u_k'\|_{L^2(\Gamma)}^{q'(\frac{1}{2}-\frac{1}{q})} \|u_k\|_{L^2(\Gamma)}^{q'(\frac{1}{2}+\frac{1}{q})} \\
& = C_{GN}^{q'} \sum_{k\in\kappa\Zodd} (|k|^{-\frac{1}{2}}\|u_k'\|_{L^2(\Gamma)})^{q'(\frac{1}{2}-\frac{1}{q})} (|k|^\frac{1}{2} \|u_k\|_{L^2(\Gamma)})^{q'(\frac{1}{2}+\frac{1}{q})} |k|^\frac{-1}{q-1}.
\end{align*}
A triple H\"older inequality with exponents $\frac{4(q-1)}{q-2}$, $\frac{4(q-1)}{q+2}$, and $\frac{2(q-1)}{q-2}$ leads to 
\begin{eqnarray*}
\lefteqn{\sum_{k\in\kappa\Zodd} \|u_k\|_{L^q(\Gamma)}^{q'}}\\
&\leq& C_{GN}^{q'} \left(\sum_{k\in\kappa\Zodd} |k|^{-1} \|u_k'\|_{L^2(\Gamma)}^2\right)^\frac{q-2}{4(q-1)} \left(\sum_{k\in\kappa\Zodd} |k|\|u_k\|_{L^2(\Gamma)}^2\right)^\frac{q+2}{4(q-1)} \left(\sum_{k\in\kappa\Zodd} |k|^\frac{-2}{q-2}\right)^\frac{q-2}{2(q-1)} \\ 
&\leq & C_{GN}^{q'} \|u\|_H^{q'} C 
\end{eqnarray*}
where $C=\left(\sum_{k\in\kappa\Zodd} |k|^\frac{-2}{q-2}\right)^\frac{q-2}{2(q-1)}<\infty$ since $2<q<4$. The finishes the proof of the embedding result.
\end{proof}

\begin{proof}[Proof of Lemma~\ref{ConccompLemma}.] It is enough to prove the result in the case $q=2$. Further, it suffices to treat the case where $\tilde q\in (2,3)$. Once this has been accomplished, the remaining case of $\tilde q$ can then be obtained by H\"older-interpolating the $L^{\tilde q}$-norm between two values $q_1\in (2,3)$ and $q_2\in (\tilde q,\infty)$ and using from the previous case that the $L^{q_1}$-norm of the sequence $(u_n)_{n\in\N}$ tends to $0$ while its $L^{q_2}$-norm stays bounded since it is controlled by the $\mathcal{H}$-norm, see Lemma~\ref{specific_values}.

If we let $2<s<q<4$ and set $\theta:= \frac{s-2}{q-2}\cdot\frac{q}{s}$ then H\"older interpolation yields
$$
\|u\|_{L^s(\Gamma_n\times\T_T)} \leq \|u\|_{L^2(\Gamma_n\times\T_T)}^{1-\theta} \|u\|_{L^q(\Gamma_n\times\T_T)}^\theta.
$$
If we make the choice $s=\frac{4(q-1)}{q}$ then $s$ varies in $(2,3)$ if $q$ varies in $(2,4)$ and $s<q$. Moreover, $\theta=\frac{2}{s}$ so that after taking it to the power $s$, the previous inequality becomes
$$
\|u\|_{L^s(\Gamma_n\times\T_T)}^s \leq \|u\|_{L^2(\Gamma_n\times\T_T)}^{(1-\theta)s} \|u\|_{L^q(\Gamma_n\times\T_T)}^2.
$$
Summing over $n$ yields 
\begin{equation} \label{final}
\|u\|_{L^s(\Gamma\times \T_T)}^s \leq \sup_{n\in\Z}\|u\|_{L^2(\Gamma_n\times\T_T)}^{(1-\theta)s} \sum_{n\in\Z} \|u\|_{L^q(\Gamma_n\times\T_T)}^2.
\end{equation}
If we use this inequality for the sequence $(u_n)_{n\in\N}$ form the assumption of the lemma, then the first term of the right-hand side tends to $0$. Thus, we need to find a bound for the second term. Since $q\in (2,4)$ we can use Lemma~\ref{embedding_suboptimal} in the following argument. We take a cut-off function $\phi_n:\Gamma\to [0,1]$, $n\in\Z$ with the properties 
$$
\phi_n(x) = \left\{
\begin{array}{ll}
1 & \mbox{ in } \Gamma_n, \vspace{\jot} \\
0 & \mbox{ in } \Gamma_m \mbox{ with } |m-n|\geq 2, \vspace{\jot}\\
\in [0,1] & \mbox{ in } \Gamma_m \mbox{ with } |m-n|=1.
\end{array}
\right.
$$
We can assume that $\phi$ grows linearly and that $\phi\in H^1_c(\Gamma)_{\text{symm}}$ with $0\leq |\phi'|\leq 1$ on $\Gamma$. Then 
\begin{align*}
\sum_{n\in \Z} \|u\|_{L^q(\Gamma_n\times\T_T)}^2 & \leq \sum_{n\in\Z} \|u\phi_n \|_{L^q(\Gamma\times\T_T)}^2 \\
& \leq C \sum_{n\in \Z} \|u\phi_n\|_H^2 \mbox{ by Lemma~\ref{embedding_suboptimal} } \\
& = C \sum_{n\in \Z} \sum_{k\in \kappa\Zodd} |k|\|u_k\phi_n\|^2_{L^2(\Gamma)} + \frac{1}{|k|} \|(u_k\phi_n)'\|^2_{L^2(\Gamma)} \\
& \leq \sum_{k\in \kappa\Zodd} 3|k|\|u_k\|^2_{L^2(\Gamma)} + \sum_{n\in\Z}\sum_{k\in\kappa\Zodd}\frac{2}{|k|} (\|u_k' \phi_n\|^2_{L^2(\Gamma)} + \|u_k\phi_n'\|^2_{L^2(\Gamma)}) \\
& \leq C\sum_{k\in \kappa\Zodd} \bigl(3|k|+\frac{4}{|k|}\bigr) \|u_k\|_{L^2(\Gamma)}^2 + \frac{6}{|k|}\|u_k'\|_{L^2(\Gamma)}^2 \\
& \leq \tilde C \|u\|_H^2 \leq \bar C \|u\|_{\mathcal H}^2.
\end{align*}
Applying these inequalities to the sequence $(u_n)_{n\in\N}$ from the assumption of the lemma and inserting it into \eqref{final} establishes the claim that $\|u_n\|_{L^s(\Gamma\times\T)}\to 0$ as $n\to\infty$. This finishes the proof of the concentration compactness lemma.
\end{proof}

\section*{Acknowledgment}
	Funded by the Deutsche Forschungsgemeinschaft (DFG, German Research Foundation) – Project-ID 258734477 – SFB 1173. 

\bibliographystyle{plain}

\bibliography{breatherbib}

\end{document}